\documentclass[1996/10/24]{amsart}

\usepackage{amssymb,stmaryrd}
\usepackage{lipsum}
\usepackage{amsfonts}
\usepackage{graphicx}
\usepackage{epstopdf}
\usepackage{algorithmic}
\usepackage{enumerate}
\usepackage{bm}
\usepackage{multirow}
\usepackage{color}

\allowdisplaybreaks[4]

\ifpdf
  \DeclareGraphicsExtensions{.eps,.pdf,.png,.jpg}
\else
  \DeclareGraphicsExtensions{.eps}
\fi

\newtheorem{remark}{Remark}
\newtheorem{theorem}{Theorem}[section]
\newtheorem{lemma}{Lemma}[section]
\newtheorem{assum}{Assumption}
\newtheorem{define}{Definition}

\def\bmalpha{\boldsymbol{\alpha}}
\def\bmbeta{\boldsymbol{\beta}}
\def\bmr{\boldsymbol{r}}
\def\bmz{\boldsymbol{z}}
\def\bmPhi{\boldsymbol{\Phi}}

\def\calS{\mathcal{S}}

\def\red{\textcolor{red}}
\def\widep{\widetilde{p}}
\def\widemu{\widetilde{\mu}}

\usepackage[a4paper, left=2.3cm, right=2.3cm, top=3cm, bottom=3cm]{geometry}

\usepackage{fancyhdr}

\begin{document}

\title[Analysis of two-phase flow in porous media]
{The Existence, uniqueness, and regularity of weak solutions for a thermodynamically consistent two-phase flow model in porous media} 
\thanks{Huangxin Chen was supported by National Key Research and Development Project of China (Grant No. 2023YFA1011702) and National Natural Science Foundation of China (Grant No. 12471345, 12122115). Haitao Leng was supported by Basic and Applied Basic Research Foundation of Guangdong Province (Grant No. 2024A1515011163, 2025A1515010428)}


\author{Huangxin Chen}
\address{School of Mathematical Sciences and Fujian Provincial Key Laboratory on Mathematical Modeling and High Performance Scientific Computing, Xiamen University, Fujian, 361005, China}
\email{chx@xmu.edu.cn}

\author{Jisheng Kou}
\address{School of Civil Engineering, Shaoxing University, Shaoxing 312000, Zhejiang, China; School of Mathematics and Statistics, Hubei Engineering  University, Xiaogan 432000, Hubei, China}
\email{jishengkou@163.com}

\author{Haitao Leng}
\address{Corresponding authour. School of Mathematic and Information Sciences, Guangzhou University, Guangzhou, 510006, Guangdong, China} \email{htleng@m.scnu.edu.cn}

\author{Shuyu Sun}
\address{School of Mathematical Sciences, Tongji University, Shanghai 200092, China}
 \email{suns@tongji.edu.cn}

\author{Hai Zhao}
\address{School of Mathematical Sciences and Fujian Provincial Key Laboratory on Mathematical Modeling and High Performance Scientific Computing, Xiamen University, Fujian, 361005, China}
\email{zhaohai@stu.xmu.edu.cn}

\subjclass[2010]{}

\keywords{existence and uniqueness; regularity;
thermodynamically consistent model; two-phase flow; porous media}

\date{}

\begin{abstract}
Thermodynamically consistent models for two-phase flow in porous media have attracted significant attention in recent years.
In this paper, we prove the existence, uniqueness and regularity of the weak solution to such a recent model proposed in \cite{Gao2020,Kou2022}. To this end, firstly, we introduce a fully implicit time semi-discrete approximation and a fully discrete approximation for an appropriate weak formulation of the thermodynamically consistent model.
Next, by using the zeros of a vector field theorem, we prove the existence of the weak solution for the fully discrete approximation.
Then the existence of weak solutions for the fully implicit time semi-discrete approximation and the weak formulation of the model are derived by the weak convergence technique and the energy stability estimate.
Subsequently, by the Gr{\" o}nwall inequality,  we prove the uniqueness result under the smoothness assumption on the chemical potential.
Finally, combined with the regularity theory of elliptic partial differential equations (PDE), the regularity of the weak solution for the model with complete Neumann boundary conditions is established. 
\end{abstract}




%
%
%

%

\maketitle

\section{Introduction}

The model of two-phase flow in porous media is of great importance in geo-energy recovery and groundwater  management \cite{Chavent1986,Chen2006}.
 For instance, during the secondary oil recovery for petroleum extraction, water is injected to displace  oil in the reservoir.
Due to its importance,   modeling and numerical simulation of two-phase flow in porous media have been extensively studied \cite{Arbogast1996,Chen2006,Kou2014,Chen2019,Hoteit2008,HoteitH2008}.

The second law of thermodynamics is acknowledged as a general and fundamental  principle and reliable  mathematical models should obey this law.  Such models can be called thermodynamically consistent mathematical models.  For a specific isothermal fluid flow problem, the second law of thermodynamics yields a certain energy dissipation law, which actually provides a powerful mathematical tool to analyze the model as depicted in this paper (see the proof of Lemma \ref{lemma 3.4.1}).
The thermodynamically consistent models have been extensively studied in the field of two-phase diffuse interface flows \cite{Abels2012,Guo2015,Zhu2019,Feng2012,Shen2014,Cueto2014,Chen2015,Zhu2020}. Thermodynamically consistent approaches have also been explored to interpret the classical two-phase flow models in porous media in the literature, for instance \cite{Cances2018,Hassanizadeh2011} and the references therein.
In recent years, Kou et al. \cite{Gao2020,Kou2022} proposed a novel thermodynamically consistent model for incompressible and immiscible two-phase flow in porous media by introducing logarithmic free energy to characterize the capillarity effect.
As a result, the second law of thermodynamics is naturally obeyed by the new model. Moreover, the new model has elegant symmetry in its mathematical structure.
Meanwhile, relative permeability and capillarity effects arising from the classical models of two-phase flow in porous media are also inherent in the new model.
Energy stable numerical methods for the new model have been developed in \cite{Kou2022,Kou2023}, and its extensions to the two-phase flow coupled with rock compressibility, the two-phase flow in poro-viscoelastic media, and unsaturated flows in porous media have been explored in \cite{KouJ2023,KouJ2024_0,KouJ2024} as well.
Nevertheless, there is a lack of knowledge about the existence, uniqueness and regularity of the weak solution for the new model. Hence, we devote to this issue in this paper.

Due to the highly nonlinear nature of the two-phase flow, there is a significant challenge to the mathematical analysis of the weak solution.
The classical model of two-phase flow in porous media has been well studied and the existence of the weak solution under assumptions on physical data has been proved.
In the one-dimensional case, there are some mathematical analysis \cite{Bertsch2003,Buzzi2009,Cances2009,CancesC2009,Steinle2022} that ensure the well-posedness of the problem.
In the higher dimensional spaces, the existence of the weak solution for incompressible and immiscible two-phase flow in an unfractured, single-porosity and dual-porosity porous media has been shown in  \cite{Alt1985,Amadori2015,Arbogast1992,Kroener1984}.
Chen \cite{Chen2001,Chen2002} analyzed the existence and regularity of the weak solution for the degenerate two-phase incompressible flow model in porous media and established the uniqueness of the weak solution under the assumption that the artificial global pressure is Lipschitz continuous in space.
The validity of this assumption, however, requires that the saturation is H\"{o}lder continuous in space.
For heterogeneous porous media comprising two distinct rock types, the existence result for the weak solution of the two-phase flow model with discontinuous capillary pressure field has been proved in \cite{Cances2012}.
More works on the existence of weak solutions for the classical models of two-phase flow in porous media can be found in \cite{Bourgeat1995,Amaziane1996,Daim2007,Mikelic2010}.

Most of the works established the existence of the weak solution for the classical models of two-phase flow in porous media by introducing the artificial and complementary pressures and assuming that the boundedness of higher-order derivative terms holds.
However, the complementary pressure has no physical meaning and is difficult to inversely solve the saturation by it. Therefore,
in this paper, we prove the existence of a weak solution to the new model by taking the chemical potential and the real phase pressure as the primary variables.
Compared with \cite{Arbogast1992,Chen2001}, there is no need for any assumption on the boundedness of higher-order derivative terms.

Based on the Galerkin approximation method \cite{Evans2022}, we establish for the first time the existence,  uniqueness and regularity of the weak solution to the new model. 
Instead of using the regularized problem, we primarily employ the zeros of a vector field theorem \cite{Evans2022}, which was utilized in \cite{Arbogast1992} for addressing nonlinear differential equations, to obtain the existence of the weak solution for the fully discrete scheme in this paper.
Then we obtain the existence of the weak solution for the fully implicit time semi-discrete scheme and demonstrate the boundedness of the saturation and the energy stability estimate.
Subsequently, we prove the existence of the weak solution for the new model.
In order to prove the uniqueness result, it suffices to assume that the chemical potential is Lipschitz continuous in space and the associated Lipschitz constant belongs to $L^2$ in the time.
Based on Theorem 2.5 in \cite{Chen2002}, we introduce the artificial and complementary pressures to obtain an elliptic equation for the artificial pressure and a parabolic equation for the complementary pressure.
Then we can deduce the regularity of the weak solution for the model with complete Neumann boundary conditions by using the regularity theory of elliptic PDEs.

The paper is organized as follows.
In Section 2, we describe the thermodynamically consistent model for incompressible and immiscible two-phase flow in porous media and give a weak formulation of the new model. Then we present the existence result of a weak solution to the new model.
In Section 3, we introduce a fully implicit time semi-discrete approximation of the new model and then establish a fully discrete approximation.
In Section 4, we first establish the existence of the weak solution for the fully discrete scheme, then prove the existence of the weak solution for the fully implicit time semi-discrete scheme, and finally demonstrate the existence of the weak solution to the new model.
The uniqueness and regularity of the weak solution are analyzed in Sections 5 and 6, respectively. 
Some concluding remarks are given in the last section.

\section{Mathematical model and preliminary}
Let $\Omega$ be an open and bounded domain in $\mathbb{R}^d~(d=2,3)$ with the Lipschitz boundary $\partial \Omega$ and $T$ a positive constant denoting the final time. We consider the following model \cite{Kou2022} of two-phase flow in porous media $\Omega$:
 \begin{align}
 \phi\frac{\partial {S_{\alpha}}}{\partial t} +\nabla \cdot \mathbf{u}_{\alpha}
  = q_{\alpha},&  \ \ {\alpha}=w,\ n,
   \label{F1.1} \\
\mathbf{u}_{\alpha} = -\lambda_{\alpha}\bm{\mathcal{K}}\nabla(\widep+\widemu_{\alpha}),&  \ \ {\alpha}=w,\ n,
 \label{F1.2} \\
 \widemu_{\alpha}=\frac{\partial F(S_w,S_n)}{\partial S_{\alpha}},&  \ \  {\alpha}=w,\ n,  \label{F3} \\
        S_w+S_n=1,&     \label{F1.4}	
\end{align}
where $\alpha=w, n$ denote the wetting and non-wetting phases respectively, $\phi=\phi(x)$ is the porosity of the porous medium. $S_{\alpha}$, $\mathbf{u}_{\alpha}$, $q_{\alpha}$, and $\widemu_{\alpha}$ represent the saturation, the Darcy's velocity, the external volumetric flow rate, and the chemical potential of the $\alpha$-phase, $\widep$ and $F$ represent the pressure and the free energy function.
The tensor $\bm{\mathcal{K}}$, which is bounded, symmetric, and uniformly positive definite, represents the absolute permeability, i.e. $$
K_{\min}\bm{x}\cdot\bm{x}\leq \bm{\mathcal{K}}\bm{x}\cdot\bm{x}\leq K_{\max}\bm{x}\cdot\bm{x},\quad  \forall \bm{x}\in \mathbb{R}^d,
$$
where $K_{\min}$ and $\ K_{\max}$ are two positive constants.
Moreover, $\lambda_{\alpha}=\frac{k_{r\alpha}}{\eta_{\alpha}}$ denotes the phase mobility, $k_{r\alpha}$ and $\eta_{\alpha}$ are the relative permeability and viscosity of the $\alpha$-phase, respectively.
In practice, there usually exists a certain amount of residual fluids, so we assume in this paper that $S_{\epsilon}\leq S_w\leq 1-S_{\epsilon}$, where $S_\epsilon$ (a sufficiently small positive constant) denotes the residual saturation level.

In the paper, for any $1\leq p\leq \infty$, nonnegative integer $s$ and an open subset $\mathcal{D}\subset \Omega$, we let $W^{s,p}(\mathcal{D})$ be the standard Sobolev spaces (see \cite{Adams2003}) with the norms $\Vert\cdot\Vert_{W^{s,p}(\mathcal{D})}$. In particular, if $p=2$, $W^{s,2}(\mathcal{D})$ is denoted by $H^s(\mathcal{D})$. If $s=0$, $W^{0,p}(\mathcal{D})$ is denoted by $L^p(\mathcal{D})$. Moreover, we will also use the dual space $H^{-1}(\Omega)$ of $H^1(\Omega)$ with the norm $\Vert\cdot\Vert_{H^{-1}(\Omega)}$.
For any $\beta\in (0,1]$, let $C^{s,\beta}(\Omega)$ be the standard H{\" o}lder spaces (see \cite{Adams2003}) with the norms $ \Vert \cdot \Vert_{C^{s,\beta}(\Omega)}$.
For any Banach spaces $X$, $L^p(0,T;X)$ and $W^{1,p}(0,T;X)$ denote the Bochner spaces (see \cite{Hytonen2016}) with the norms
\begin{align*}
\Vert v\Vert_{L^p(0,T;X)}^p=\int_0^T \Vert v(t)\Vert_{X}^p\mathrm{d}t, \ 1\leq p<\infty,\quad \ \Vert v\Vert_{L^\infty(0,T;X)}=\max\limits_{0\leq t\leq T} \Vert v(t)\Vert_{X}, \\
\Vert v\Vert_{W^{1,p}(0,T;X)}^2=\Vert v\Vert_{L^p(0,T;X)}^2+\Vert\partial_t v\Vert_{L^p(0,T;X)}^2.
\end{align*}

\begin{assum}
\label{assum 1}
Let $\phi\in L^{\infty}(\Omega)$ satisfy $\phi(x)\geq \phi_m>0$, where $\phi_m$ is a positive constant. We assume that $q_{\alpha}, k_{r\alpha}\in L^\infty(0,T;L^\infty(\Omega))$ only depend on $S_w$ and are continuous with respect to $S_w$. Furthermore, we assume $k_{rw}(0)=0,\ k_{rw}(S_w)>0$ for $S_w>0$ and $k_{rn}(1)=0,\ k_{rn}(S_w)>0$ for $S_w<1$.
\end{assum}

From Assumption \ref{assum 1}, we can obtain
$$
\lambda_{\min}\leq \lambda_\alpha \leq \lambda_{\max}, \quad \alpha=w,\ n,
$$
for $S_{\epsilon}\leq S_w\leq 1-S_{\epsilon}$, where $\lambda_{\min}$ and $\lambda_{\max}$ are two positive constants.

Next, We define the general phase potential $p_{\alpha}$ and the capillary pressure $p_{c}$ as follows:
\begin{equation} \label{F7}
p_{\alpha}=\widep+\widemu_{\alpha}~~ \  {\alpha}=w,\ n, \quad\quad
p_{c}=p_{n}-p_{w}=\widemu_{n}-\widemu_{w}.
\end{equation}

The expression of the free energy function $F$ defined as that in \cite{Kou2022} is given by
\begin{equation} \label{F4}
F(S_w,S_n)= \sum_{{\alpha}=w, n} \gamma_{\alpha}S_ {\alpha}({\rm{ln}} \ (S_ {\alpha})-1)+ \gamma_{wn}S_w S_n,
\end{equation}
and its equivalent formulation based only on $S_w$ can be rewritten as:
\begin{equation} \label{F10}
F(S_w)=\gamma_w S_w ({\rm{ln}} \ (S_w)-1)+\gamma_n(1-S_w)({\rm{ln}}(1-S_w)-1)+\gamma_{wn}S_w(1-S_w),
\end{equation}
where $\gamma_{\alpha}>0$ for $\alpha=w, n, wn$ are the energy parameters. Then the chemical potential denoted by $\mu_w(S_w)$ and derived
from the derivative of $F(S_w)$ with respect to $S_w$ can be formulated as
\begin{equation} \label{F11}
\mu_w(S_w)=F'(S_w)=\gamma_w {\rm{ln}} \ (S_w)-\gamma_n{\rm{ln}} \ (1-S_w)+\gamma_{wn}(1-2S_w).
\end{equation}

\begin{assum}\label{assum 2}
 The energy parameters $\gamma_w$, $\gamma_n$, and $\gamma_{wn}$ belong to $L^\infty (\Omega)$ and satisfy that
\begin{equation}\label{eqassum 2}
\frac{\gamma_w}{x}+\frac{\gamma_n}{1-x}-2\gamma_{wn}\geq c_{\min}>0, \ \forall \ x\in(0,1),
\end{equation}
where $c_{\min}$ is a positive constant. By a simple calculation, we know that the minimum point of the left-hand side of \eqref{eqassum 2} is $x=\sqrt{\gamma_w}/(\sqrt{\gamma_w}+\sqrt{\gamma_n})$. Thus the condition \eqref{eqassum 2} is equivalent to
\begin{equation}\label{eqassum 2.1}
\left(\sqrt{\gamma_w}+\sqrt{\gamma_n}\right)^2-2\gamma_{wn} \geq c_{\min}>0.
\end{equation}
Moreover, we observe that almost all the data used in \textup{\cite{Kou2022,KouJ2023}} satisfy the condition \eqref{eqassum 2.1}.
\end{assum}

By calculating the derivative for $\mu_w$ with respect to $S_w$, it is easy to observe
by Assumption \ref{assum 2} that
$$\frac{d\mu_w(S_w) }{dS_w}=\frac{\gamma_w }{S_w}+\frac{\gamma_n }{1-S_w}-2\gamma_{wn}>0,$$
and this implies that $\mu_w$ is a strictly monotone increase function with respect to $S_w$. Thus we can uniquely solve a $S_w$ for a given $\mu_w$ by \eqref{F11}, which means that we can define a solution operator $\mathcal{S}$ such that $S_w=\mathcal{S}(\mu_w)$. Let $p=p_n$, then we can deduce from $S_n=1-S_w$ an equivalent formulation of the model in the following
 {\begin{equation} \label{F13}
 \left\{
 \begin{aligned}
 S_w=&\ \calS(\mu_w),\quad \text{in}~\Omega\times (0,T],\\
 -\nabla \cdot \left(\lambda_t \bm{\mathcal{K}}\nabla p\right)=&\ \nabla \cdot \left(\lambda_w  \bm{\mathcal{K}}\nabla \mu_w\right)
 +q_t,\quad \text{in}~\Omega\times (0,T],\\
\phi\frac{\partial {S_w}}{\partial t} -\nabla \cdot \left(\lambda_w\bm{\mathcal{K}}\nabla \mu_w\right)=&\ \nabla \cdot \left(\lambda_w \bm{\mathcal{K}}\nabla p\right)+q_w,\quad \text{in}~\Omega\times (0,T],
 \end{aligned}		
 \right.
\end{equation}}
where $q_t=q_w+q_n$ and $\lambda_t=\lambda_w+\lambda_n$ denotes the total mobility. Next, we close the model by providing the boundary and initial conditions:
 \begin{equation} \label{F13.1}
 \begin{cases}
 \mu_w = \varphi_1(\bm{x},t), &\text{on}~ \Gamma_1\times (0,T],\\
\lambda_n\bm{\mathcal{K}}\nabla p_n \cdot \mathbf{n} = \varphi_2, &\text{on}~ \Gamma_2\times (0,T],\\
 p=\varphi_3(\bm{x},t), &\text{on}~\Gamma_1\times (0,T],\\
\lambda_w\bm{\mathcal{K}}\nabla p_w\cdot \mathbf{n}=\varphi_4, &\text{on}~ \Gamma_2\times (0,T],
 \end{cases}		
\end{equation}
and
\begin{equation} \label{F13.2}
 \mu_w(x,0)=\mu_w^0(x), \quad \text{in}~ \Omega,
\end{equation}
where $\mathbf{n}$ is the outer unit normal vector to $\partial \Omega$,  $\Gamma_1\cup \Gamma_2=\partial \Omega$ and {$\Gamma_1\cap \Gamma_2=\emptyset$}, and $\mu_w^0\in L^2(\Omega)$ is a given function. In the following, we set $S_w^0=\mathcal{S}(\mu_w^0)$.

\begin{assum}
\label{assum 3}
 We assume that $\varphi_\alpha$ for $\alpha=1, 3$ can be extended to the whole domain $\Omega$  (see Definition 13.2 in {\cite{Tartar2007}}) such that
\begin{align*}
 \varphi_1 \in H^1(0,T;H^{-1}(\Omega))\cap L^2(0,T;H^{1}(\Omega)) \cap W^{1,1}(0,T;L^1(\Omega)), \\
   \varphi_3 \in L^2(0,T;H^{1}(\Omega)).
\end{align*}
On the other hand, we assume that $\varphi_\alpha\in L^\infty(0,T;L^\infty(\Gamma_2))$ for $\alpha=2, 4$ depend only on $S_w$ and are continuous with respect to $S_w$.
\end{assum}

Now, we define the spaces
 $$
 W=\{v\in H^1(\Omega):v|_{\Gamma_1}=0\},\quad \ V=\{v\in H^1(\Omega):v|_{\Gamma_1}=0; \ {\textup{if meas}\ (\Gamma_1)=0}\ \textup{then}\ \int_{\Omega} v \mathrm{d}x=0\}.
 $$
 \begin{remark}
     From a physical perspective, setting a Dirichlet boundary condition for the saturation $S_w$ at the fluid outlet is unreasonable. Therefore, through the constitutive relationship between the saturation $S_w$ and the chemical potential $\mu_w$, a Dirichlet boundary condition for the chemical potential $\mu_w$ should not be set at the fluid outlet. Hence, if $\textup{meas}\ (\Gamma_1)\neq 0$, the boundary $\Gamma_1$ should be designated as the fluid inlet boundary.
 \end{remark}
Then we give the following weak formulation for problems \eqref{F13}-\eqref{F13.2}.
 \begin{define}\label{define1}
 A triplet $(S_w,\mu_w,p)$ is a weak solution of the model \eqref{F13} with the boundary \eqref{F13.1} and initial \eqref{F13.2} if the saturation $S_w$ satisfies the conditions $S_\epsilon\leq S_w\leq 1-S_\epsilon$ and
 $$\mu_w \in \varphi_1(x,t)+L^2(0,T;W), \quad p\in \varphi_3(x,t)+L^2(0,T;V), \quad \partial_t S_w\in L^2(0,T;H^{-1}(\Omega)),$$
  such that
 \begin{equation} \label{equF20}
 \left\{
 \begin{aligned}
 &S_w=\calS(\mu_w)\ \text{a.e. in }\ [0,T]\times \Omega, \\
&\int_0^T (\lambda_t \bm{\mathcal{K}}\nabla p, \nabla v) \mathrm{d}t
 \\
= &\ -\int_0^T (\lambda_w \bm{\mathcal{K}}\nabla \mu_w,\nabla v)\mathrm{d}t+\int_0^T (q_t, v)\mathrm{d}t
+\int_0^T \langle\varphi_{2,4},v\rangle_{\Gamma_2}\mathrm{d}t,\ \forall v\in L^2(0,T;V), \\
& \int_0^T \left(\phi \partial_t S_w,\xi\right)\mathrm{d}t
+\int_0^T (\lambda_w\bm{\mathcal{K}}\nabla \mu_w,\nabla \xi) \mathrm{d}t
\\
=&\ -\int_0^T (\lambda_w\bm{\mathcal{K}}\nabla p,\nabla \xi)\mathrm{d}t+{\int_0^T} (q_w,\xi)\mathrm{d}t
+\int_0^T \langle\varphi_4,\xi\rangle_{\Gamma_2} \mathrm{d}t, \ \forall \xi\in L^2(0,T;W),\\
 &\mu_w(x,0)=\ \mu_w^0(x) \ \text{a.e. in}\ \Omega,
 \end{aligned}	
 \right.	
\end{equation}
where $\varphi_{2,4}=\varphi_4+\varphi_2$, $\partial_t S_w=\frac{\partial S_w}{\partial t}$, $(f,g)=\int_{\Omega} fg \mathrm{d}x$ and $\langle f,g\rangle_{\Gamma_2}=\int_{\Gamma_2} fg \mathrm{d}s$.
\end{define}

To obtain that the saturation $S_w$ satisfies $S_\epsilon\leq S_w\leq 1-S_\epsilon$, we assume that the following conditions hold.
\begin{assum}\label{assum 4}
We assume that $S_\epsilon\leq S_w^0\leq 1-S_\epsilon$ and
 \begin{align}
\lambda_n(S_\epsilon)q_w(S_\epsilon)-\lambda_w(S_\epsilon)q_n(S_\epsilon)\geq &\ 0,
\label{eqassum 4.1} \\
\lambda_n(S_\epsilon)\varphi_4(S_\epsilon)-\lambda_w(S_\epsilon)\varphi_2(S_\epsilon)\geq &\ 0,
\label{eqassum 4.2} \\
\lambda_n(1-S_\epsilon)q_w(1-S_\epsilon)-\lambda_w(1-S_\epsilon)q_n(1-S_\epsilon)\leq &\ 0,
\label{eqassum 4.3} \\
\lambda_n(1-S_\epsilon)\varphi_4(1-S_\epsilon)-\lambda_w(1-S_\epsilon)\varphi_2(1-S_\epsilon)\leq &\ 0.
\label{eqassum 4.4}
\end{align}
 Moreover, we assume $\mu_w(S_\epsilon)\leq \varphi_1\leq \mu_w(1-S_\epsilon)$. $f(S_w)=f(S_\epsilon)$ if $S_w\leq S_\epsilon$ and $f(S_w)=f(1-S_\epsilon)$ if $S_w\geq 1-S_\epsilon$, where $f=\lambda_w, \lambda_n, q_w, q_n, \varphi_2, \varphi_4$.
\end{assum}

\begin{remark}
    Assumptions \ref{assum 1}-\ref{assum 4} are consistent with (A3)-(A6) in the reference \textup{\cite{Chen2001}}.
\end{remark}

\begin{theorem}\label{mainresult}
     Under Assumption \ref{assum 4}, let a triplet $(S_w,\mu_w,p)$ be a weak solution of \eqref{equF20}, then the saturation $S_w$ satisfies $S_\epsilon\leq S_w\leq 1-S_\epsilon$.
\end{theorem}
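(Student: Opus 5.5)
We argue by a Stampacchia truncation (maximum-principle) method applied to the chemical potential. Since $\mu_w\mapsto\calS(\mu_w)$ is strictly increasing, the bounds $S_\epsilon\le S_w\le 1-S_\epsilon$ are equivalent to $\mu_w(S_\epsilon)\le\mu_w\le\mu_w(1-S_\epsilon)$. We treat the lower bound; the upper bound is symmetric, with \eqref{eqassum 4.3}--\eqref{eqassum 4.4} replacing \eqref{eqassum 4.1}--\eqref{eqassum 4.2}. Set $m=\mu_w(S_\epsilon)$ and use the test function
\[
\xi=(\mu_w-m)^-=\max\{m-\mu_w,0\}.
\]
Assumption \ref{assum 4} gives $\varphi_1\ge m$ on $\Gamma_1$, so $\xi\in L^2(0,T;W)$. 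It is therefore admissible in the saturation equation of \eqref{equF20}, integrated over $(0,t)$ instead of $(0,T)$.

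The first step is to remove the pressure coupling. We combine the two equations so that $\nabla p$ drops out. Multiply the pressure equation by $\lambda_w/\lambda_t$ evaluated at $S_w$ and test with $\xi$. Because $\lambda_w/\lambda_t$ is not constant, I would instead use the fractional-flow structure directly. On the set $\{\xi>0\}$ we have $S_w\le S_\epsilon$, and by the extension convention in Assumption \ref{assum 4} all coefficients are frozen at their values at $S_\epsilon$. On that set $\lambda_w/\lambda_t$ is therefore a constant $f_\epsilon=\lambda_w(S_\epsilon)/\lambda_t(S_\epsilon)$, and $\nabla\xi$ is supported there. Test the pressure equation with $v=f_\epsilon\xi$. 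In the case meas$(\Gamma_1)=0$, first shift $\xi$ by its mean; the constant is harmless because the gradient terms ignore it, and the source terms need compatibility. Subtracting this from the saturation equation eliminates $(\lambda_w\bm{\mathcal{K}}\nabla p,\nabla\xi)$ and leaves
\[
\int_0^t(\phi\partial_tS_w,\xi)+\int_0^t\Big(\lambda_w-\tfrac{\lambda_w^2}{\lambda_t}\Big)\bm{\mathcal{K}}\nabla\mu_w\cdot\nabla\xi
=\int_0^t\big(q_w-f_\epsilon q_t,\xi\big)+\int_0^t\langle\varphi_4-f_\epsilon\varphi_{2,4},\xi\rangle_{\Gamma_2}.
\]
On $\{\xi>0\}$ we have $q_w-f_\epsilon q_t=\big(\lambda_nq_w-\lambda_wq_n\big)(S_\epsilon)/\lambda_t(S_\epsilon)\ge0$ by \eqref{eqassum 4.1}. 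Similarly, the boundary integrand is $\ge0$ by \eqref{eqassum 4.2}. Since $\xi\ge0$, both right-hand terms are nonnegative. That sign is the wrong direction for a bound, so the test function must be $-\xi$, i.e. $(\mu_w-m)^-$ taken with the opposite orientation. Concretely, test with $\zeta=-(m-\mu_w)^+$, which makes the right-hand side $\le0$. The diffusion term becomes $\lambda_w\lambda_n/\lambda_t\,\bm{\mathcal{K}}\nabla\zeta\cdot\nabla\zeta\ge0$.

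The second step, and the main obstacle, is the time-derivative term. We need
\[
\int_0^t\langle\phi\partial_tS_w,\zeta\rangle=\int_\Omega\phi\,G(\mu_w(t))-\int_\Omega\phi\,G(\mu_w^0),
\qquad
G(\mu)=\int_m^{\mu}(\calS(s)-S_\epsilon)\,\mathrm{d}\,(\dots),
\]
that is, a chain rule for $\langle\partial_t\calS(\mu),\psi(\mu)\rangle$ with monotone $\calS$ and $\psi$. The relevant primitive is $\Psi(S)=\int_{S}^{S_\epsilon}(m-\mu_w(\sigma))^+\,d\sigma\ge0$, which vanishes exactly when $S\ge S_\epsilon$. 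I would justify the identity by the Alt--Luckhaus type lemma (Steklov averaging in time plus convexity of $\Psi$), using $\partial_tS_w\in L^2(0,T;H^{-1})$ and $\zeta\in L^2(0,T;H^1)$.

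Granting this, we obtain $\int_\Omega\phi\Psi(S_w(t))\le\int_\Omega\phi\Psi(S_w^0)=0$, since $S_w^0\ge S_\epsilon$. Hence $\Psi(S_w(t))=0$ a.e., which gives $S_w\ge S_\epsilon$. The upper bound follows in the same way with $(\mu_w-\mu_w(1-S_\epsilon))^+$.
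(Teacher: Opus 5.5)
Your argument is essentially the paper's: after your sign flip, the test function $\zeta=-(M_{\min}-\mu_w)^+$ is exactly the paper's $(\mu_w-M_{\min})^-=\min\{\mu_w-M_{\min},0\}$. (The flip was not actually needed, since the identity is linear in the test function.) Likewise, $\nabla p$ is eliminated in the same way via the frozen ratio $\lambda_w(S_\epsilon)/\lambda_t(S_\epsilon)$, the right-hand side is signed by \eqref{eqassum 4.1}--\eqref{eqassum 4.2}, and your primitive $\Psi(S_w)$ coincides with the quantity $\big(F(S_w)-F(S_\epsilon)-M_{\min}(S_w-S_\epsilon)\big)H(\mu_w-M_{\min})$ of Appendix A.

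The differences are only in care: you justify the time-derivative chain rule by an Alt--Luckhaus/Steklov-averaging argument using convexity of $\Psi$, rather than the paper's formal Heaviside--Dirac calculus, and you work on $(0,t)$ for every $t$ and check $\zeta=0$ on $\Gamma_1$ via $\varphi_1\geq M_{\min}$. The compatibility condition you flag when $\textup{meas}(\Gamma_1)=0$ (where the truncation is not in $V$) is a point the paper's proof also passes over.
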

\begin{proof}
     Obviously, we only need to prove $M_{\min}=\mu_w(S_\epsilon)\leq\mu_w\leq \mu_w(1-S_\epsilon)=M_{\max}$ in \eqref{equF20}. Note $\lambda_\alpha(S_w)=\lambda_\alpha(S_\epsilon),\ q_\alpha(S_w)=q_\alpha(S_\epsilon),\ \alpha=w,n$, and $\varphi_\alpha(S_w)=\varphi_\alpha(S_\epsilon),\ \alpha=2,4$,
     when $\mu_w\leq M_{\min}$.
     We set $\xi=v=(\mu_w-M_{\min})^-=\min\{\mu_w-M_{\min},0\}\leq 0$ in \eqref{equF20} to find that
     \begin{align*}
 & \lambda_t(S_\epsilon)\int_0^T (\bm{\mathcal{K}}\nabla p, \nabla (\mu_w-M_{\min})^-) \mathrm{d}t
  +\lambda_w(S_\epsilon)\int_0^T (\bm{\mathcal{K}}\nabla \mu_w,\nabla (\mu_w-M_{\min})^-)\mathrm{d}t \\
=&\ \int_0^T (q_t(S_\epsilon), (\mu_w-M_{\min})^-)\mathrm{d}t
+\int_0^T \langle\varphi_{2,4}(S_\epsilon),(\mu_w-M_{\min})^-\rangle_{\Gamma_2}\mathrm{d}t, \\
& \int_0^T \left(\phi \partial_t S_w,(\mu_w-M_{\min})^-\right)\mathrm{d}t
+\lambda_w(S_\epsilon)\int_0^T (\bm{\mathcal{K}}\nabla \mu_w,\nabla (\mu_w-M_{\min})^-) \mathrm{d}t \\
=&\ -\lambda_w(S_\epsilon)\int_0^T (\bm{\mathcal{K}}\nabla p,\nabla (\mu_w-M_{\min})^-)\mathrm{d}t+{\int_0^T} (q_w(S_\epsilon),(\mu_w-M_{\min})^-)\mathrm{d}t \\
&\ +\int_0^T \langle\varphi_4(S_\epsilon),(\mu_w-M_{\min})^-\rangle_{\Gamma_2} \mathrm{d}t.
 \end{align*}
 Then we compare the above two equations to conclude that
\begin{align*}
 &\int_0^T \left(\phi \partial_t \calS(\mu_w), (\mu_w-M_{\min})^-\right) \mathrm{d} t
 +\frac{\lambda_n(S_\epsilon)}{\lambda_t(S_\epsilon)}
 \int_0^T \left(\lambda_w(S_\epsilon)\bm{\mathcal{K}}\nabla{\mu}_w,\nabla (\mu_w-M_{\min})^-\right) \mathrm{d} t  \\
 =&\
 \int_0^T \left(q_w(S_\epsilon)-\frac{\lambda_w(S_\epsilon)}{\lambda_t(S_\epsilon)}q_t(S_\epsilon),
 (\mu_w-M_{\min})^-\right)\mathrm{d} t
 +\int_0^T \left\langle\varphi_4(S_\epsilon)-\frac{\lambda_w(S_\epsilon)}{\lambda_t(S_\epsilon)}\varphi_{2,4}(S_\epsilon)
 ,(\mu_w-M_{\min})^-\right\rangle_{\Gamma_2} \mathrm{d} t.
 \end{align*}
Combining conditions \eqref{eqassum 4.1} and \eqref{eqassum 4.2} in Assumption \ref{assum 4}, we can find that the right-hand side of the above equation is non-positive. Moreover, for the left-hand side, we have (for the detailed derivation of the inequality \eqref{appendixA}, see Appendix A)
\begin{equation}\label{appendixA}
 \int_0^T \left(\phi \partial_t \calS(\mu_w), (\mu_w-M_{\min})^-\right) \mathrm{d} t \geq 0,
\end{equation}
and
\begin{align*}
\int_0^T \left(\lambda_w(S_\epsilon)\bm{\mathcal{K}}\nabla \mu_w,\nabla (\mu_w-M_{\min})^-\right) \mathrm{d} t
= &\ \int_0^T \left(\lambda_w(S_\epsilon)\bm{\mathcal{K}}\nabla (\mu_w-M_{\min})^-,\nabla (\mu_w-M_{\min})^-\right) \mathrm{d} t
\geq  0.
\end{align*}
Hence $\mu_w\geq M_{\min}$. Following analogous reasoning, $\mu_w\leq M_{\max}$ can also be demonstrated. Then we conclude the proof.
\end{proof}

Now we present the main result as follows.

\begin{theorem}\label{thm 2.1}
Under Assumptions \ref{assum 1}-\ref{assum 4}, there exists a weak solution $(S_w,\mu_w,p)$ that satisfies the weak formulation defined in definition \ref{define1}. Moreover, $S_w, \mu_w \in C(0,T;L^2(\Omega)) \cap L^\infty(0,T;L^\infty(\Omega))$.
\end{theorem}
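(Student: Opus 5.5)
The plan follows the roadmap in the introduction: Rothe's method in time, Galerkin in space, and passage to the limit in two stages. First I fix $N$, set $\tau=T/N$, and write the fully implicit scheme: given $\mu_w^{k-1}$, find $\mu_w^k\in\varphi_1(t_k)+W$ and $p^k\in\varphi_3(t_k)+V$ such that, with $S_w^k=\calS(\mu_w^k)$, the two equations of \eqref{equF20} hold with $\phi\partial_t S_w$ replaced by $\phi(S_w^k-S_w^{k-1})/\tau$. All nonlinear coefficients $\lambda_\alpha,q_\alpha,\varphi_{2},\varphi_4$ are evaluated at $S_w^k$ and truncated as in Assumption \ref{assum 4}. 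I then project onto finite dimensional subspaces $W_m\subset W$ and $V_m\subset V$, and write the unknown coefficient vector as $\bm{z}\in\mathbb{R}^{2m}$.

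To get a discrete solution, I define the vector field $\bm{\Phi}(\bm{z})$ by testing the two equations with the basis functions. I take the test pair $(\xi,v)=(\mu_w^k-\varphi_1,\,p^k-\varphi_3)$, i.e. I add the pressure equation tested with $p$ to the saturation equation tested with $\mu_w$. The cross terms $(\lambda_w\bm{\mathcal K}\nabla\mu_w,\nabla p)$ then combine with the diagonal terms into
\[
(\lambda_w\bm{\mathcal K}\nabla(\mu_w+p),\nabla(\mu_w+p))+(\lambda_n\bm{\mathcal K}\nabla p,\nabla p),
\]
which controls $\|\nabla\mu_w\|^2+\|\nabla p\|^2$ because $\lambda_\alpha\ge\lambda_{\min}$. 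For the time term I use the convexity of $F$ from Assumption \ref{assum 2}:
\[
(S_w^k-S_w^{k-1})\mu_w^k\ge F(S_w^k)-F(S_w^{k-1}),
\]
and $F$ is bounded below. Hence $\bm{\Phi}(\bm z)\cdot\bm z\ge0$ on a large sphere, with the $\varphi_1$, $\varphi_3$ shifts handled by Young's inequality. Continuity of $\bm\Phi$ follows from the continuity of $\calS$ and of the coefficients. The zeros of a vector field theorem then gives a discrete solution.

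The same test functions give the uniform energy estimate
\[
\max_k\int_\Omega\phi F(S_w^k)+\tau\sum_k\big(\|\nabla\mu_w^k\|^2+\|\nabla p^k\|^2\big)\le C,
\]
with $C$ independent of $m$ and $\tau$. Here the Dirichlet data terms $(S_w^k-S_w^{k-1},\varphi_1)$ are summed by parts in time, which is where $\varphi_1\in W^{1,1}(0,T;L^1)$ is used. The bound $S_\epsilon\le S_w^k\le1-S_\epsilon$ follows from the truncation argument of Theorem \ref{mainresult} at the semi-discrete level. The time term is easier there, since $\calS$ is monotone, so
\[
(S_w^k-S_w^{k-1})(\mu_w^k-M_{\min})^-\ge0
\]
directly.

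With $m\to\infty$ at fixed $\tau$, I obtain a semi-discrete solution. Compactness in $L^2$ from the $H^1$ bound identifies the nonlinear coefficients, and monotonicity of $\calS$ identifies $S_w^k=\calS(\mu_w^k)$. Next I build piecewise constant and piecewise linear interpolants in time. The equation gives a bound on the difference quotient of $S_w$ in $L^2(0,T;H^{-1})$.

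The main obstacle is the strong convergence of $S_w^\tau$, which is needed to pass to the limit in $\lambda_\alpha(S_w)\nabla p$ and $\lambda_w(S_w)\nabla\mu_w$. I would derive a time-translate estimate for $\calS(\mu_w^\tau)$ from the discrete equation. The key inequality, which uses that $\calS$ is Lipschitz and monotone by Assumption \ref{assum 2}, is
\[
\int_0^{T-h}\big(S_w(t+h)-S_w(t),\,\mu_w(t+h)-\mu_w(t)\big)\,\mathrm dt\le Ch\ \Rightarrow\ \|S_w(\cdot+h)-S_w\|_{L^2(L^2)}^2\le Ch,
\]
where the Lipschitz bound $|\calS(a)-\calS(b)|\le c_{\min}^{-1}|a-b|$ controls $|\Delta S_w|^2$ by $\Delta S_w\,\Delta\mu_w$. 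Combining this with the spatial $H^1$ bound on $\mu_w$ and the Kolmogorov compactness theorem (Alt–Luckhaus style) gives strong $L^2$ convergence of $S_w^\tau$. Because $\calS^{-1}=\mu_w$ is Lipschitz on $[S_\epsilon,1-S_\epsilon]$, this transfers to strong convergence of $\mu_w^\tau$. Weak convergence of the gradients then lets me pass to the limit in \eqref{equF20}, and the initial condition follows from the $H^1(H^{-1})$ bound.

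For the regularity claim, $L^\infty$ for $S_w$ is immediate from the bounds, and for $\mu_w$ it follows from $\mu_w=\mu_w(S_w)$ with $S_w\in[S_\epsilon,1-S_\epsilon]$. For $S_w\in C(0,T;L^2)$, I would use $S_w\in L^2(H^1)\cap H^1(H^{-1})$; the $L^2(H^1)$ part holds since $\nabla S_w=\calS'(\mu_w)\nabla\mu_w$ and $\calS'$ is bounded. The Lions–Magenes lemma then gives continuity, and continuity of $\mu_w$ follows from the Lipschitz bound on $\calS^{-1}$.
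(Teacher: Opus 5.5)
Your architecture matches the paper's: implicit Euler in time, Galerkin in space, the zeros-of-a-vector-field lemma with the test pair $(\mu_w-\varphi_1,\,p-\varphi_3)$, the truncation argument for $S_\epsilon\le S_w^k\le 1-S_\epsilon$, and the convexity estimate for $F$ with the $\varphi_1$-term summed by parts. You diverge at the compactness step. The paper applies Aubin--Lions--Simon to the piecewise linear interpolant $\mu_{w,N,L}$. For this it uses an $L^2(0,T;H^{-1}(\Omega))$ bound on the difference quotients of $\mu_w$, which it infers from those of $S_w$ via \eqref{FderL}, and the same lemma yields $\mu_w\in C(0,T;L^2(\Omega))$. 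Your translate estimate (with $\Delta_h f=f(\cdot+h)-f$) uses only the monotone pairing $(\Delta_hS_w,\Delta_h\mu_w)$. That is the sturdier route, because a pointwise bi-Lipschitz relation like \eqref{FderL} does not by itself carry $H^{-1}$ bounds from $\partial_tS_w$ over to $\partial_t\mu_w$.

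The genuine gap is that you treat $\calS$ as one scalar function. Under Assumption \ref{assum 2}, $\gamma_w,\gamma_n,\gamma_{wn}$ are only in $L^\infty(\Omega)$, so $S_w=\calS(\bm{x},\mu_w)$, and the formula $\nabla S_w=\calS'(\mu_w)\nabla\mu_w$ omits the $\bm{x}$-dependence. In fact $S_w$ need not lie in $L^2(0,T;H^1(\Omega))$ at all: for constant $\mu_w$, $S_w$ jumps wherever $\gamma_w$ does. This is why the paper adds $\gamma_\alpha\in H^1(\Omega)$ in Assumption \ref{assum 7} to get exactly this in Section 6.

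So your Lions--Magenes argument for $S_w\in C(0,T;L^2(\Omega))$ has no admissible pair. $S_w$ has a time derivative in $L^2(0,T;H^{-1}(\Omega))$ but no spatial regularity, and $\mu_w\in L^2(0,T;H^1(\Omega))$ has no time-derivative bound. One way to close it:
\begin{enumerate}
\item The $L^\infty$ bound and $\partial_tS_w\in L^2(0,T;H^{-1}(\Omega))$ give weak continuity of $t\mapsto S_w(t)$ in $L^2(\Omega)$.
\item An Alt--Luckhaus-type chain rule shows that $t\mapsto\int_\Omega\phi F(S_w(t))\,\mathrm{d}\bm{x}$ is continuous, with derivative $\langle\phi\partial_tS_w,\mu_w\rangle$.
\item The uniform convexity $F''\ge c_{\min}$ from Assumption \ref{assum 2} gives
\[
\int_\Omega\phi\big(F(S_w(s))-F(S_w(t))-\mu_w(t)\,(S_w(s)-S_w(t))\big)\,\mathrm{d}\bm{x}\ \ge\ \frac{\phi_m c_{\min}}{2}\Vert S_w(s)-S_w(t)\Vert_{L^2(\Omega)}^2 ,
\]
which upgrades weak continuity to strong continuity as $s\to t$.
\item Continuity of $\mu_w$ then follows from \eqref{FderL}.
\end{enumerate}

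For the same reason, the $H^1$ bound on $\mu_w$ does not by itself control the space translates of $S_w$ in your Kolmogorov step. The simplest repair is to run that step on $\mu_w$ instead. \eqref{FderL} gives $|\Delta_h\mu_w|^2\le L_{\max}\,\Delta_hS_w\,\Delta_h\mu_w$ pointwise, so your key inequality also bounds the time translates of $\mu_w$, and its $L^2(0,T;H^1(\Omega))$ bound handles space. Then $S_w^\tau=\calS(\cdot,\mu_w^\tau)$ converges, because $\calS$ is Lipschitz in $\mu_w$ with constant $c_{\min}^{-1}$ uniformly in $\bm{x}$.

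Two smaller points. First, if $\mathrm{meas}(\Gamma_1)=0$ then $W=H^1(\Omega)$, and your convexity lower bound for the time term does not control the mean of $\xi$. So $\bm{\Phi}(\bm{z})\cdot\bm{z}\ge 0$ on a large sphere does not follow as written. Use instead the linear growth $(\calS(\varphi_1+\xi)-S_w^{k-1})\,\xi\ge c\,|\xi|-C$, which holds by Assumption \ref{assum 4} and $S_\epsilon\le S_w^{k-1}\le 1-S_\epsilon$; with the $1/\tau$ factor it dominates the source and boundary terms once $\tau$ is small. Second, $\varphi_1(t_k)$ need not lie in $H^1(\Omega)$, so use the time averages $\varphi_1^{k}$ as the paper does.
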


The existence of weak solutions in Theorem \ref{thm 2.1} will be proved in Section 4.
To the end, we use the Galerkin approximation method (see Chapter 7 in \cite{Evans2022}) and prove the existence of weak solutions for the fully discrete and the fully implicit time semi-discrete schemes presented in the next section.

\section{Approximation of the continuous model}

Throughout this paper, $c$ and $C$ will denote positive constants, which are not necessarily the same at different occurrences.

\subsection{Fully implicit time semi-discrete approximation}
 Firstly, we divide the time domain $(0,T)$ into $N$ equal subintervals with time step size $\tau=T/N$. Let $I_k=(t_k,t_{k+1}]~(k=0,\cdots,N-1)$,
where $t_k=k\tau$ for $k=0,\cdots,N-1$.
Since $\mu_w$ is a strictly monotone increase function with respect to $S_w$, there exist two positive constants $L_{\min}$ and $L_{\max}$ such that for any $S,\ \widetilde{S}\in \left[S_{\epsilon},1-S_{\epsilon}\right]$,
\begin{equation}\label{FderL}
\begin{aligned}
    L_{\min}(S-\widetilde{S})\leq \mu_{w}(S)-\mu_{w}(\widetilde{S})\leq L_{\max}(S-\widetilde{S}).
\end{aligned}
\end{equation}

\begin{define}\label{define2}
 For each integer $0\leq k\leq N-1$, we let $S_{\epsilon}\leq S_w^k\leq 1-S_{\epsilon}$. Then a triplet $(S_w^{k+1}, \mu_w^{k+1}, p^{k+1})$ is called a weak solution of the fully implicit time semi-discrete scheme if
$$\mu_w^{k+1}\in \varphi_1^{k+1}+W, \quad p^{k+1}\in \varphi_3^{k+1}+V $$
satisfy
 \begin{equation} \label{F21}
 \left\{
 \begin{aligned}
 &\left(\lambda_{t}^{k+1} \bm{\mathcal{K}}\nabla p^{k+1}, \nabla v\right) \\
 =&\ -\left(\lambda_w^{k+1} \bm{\mathcal{K}}\nabla \mu_w^{k+1}, \nabla v\right)+\left(q_t^{k+1},v\right)
 +\left\langle\varphi_{2,4}^{k+1},v\right\rangle_{\Gamma_2},\ \forall v\in V,\\
&\left(\phi\frac{S_w^{k+1}-S_w^{k}}{{\tau}},\xi\right)+
\left(\lambda_w^{k+1}\bm{\mathcal{K}}\nabla \mu_w^{k+1}, \nabla \xi\right)
 \\
= &\ -\left(\lambda_w^{k+1} \bm{\mathcal{K}}\nabla p^{k+1}, \nabla \xi\right)+\left(q_w^{k+1},\xi\right)
+\left\langle\varphi_4^{k+1},\xi\right\rangle_{\Gamma_2},\ \forall \xi\in W,
 \end{aligned}		
 \right.
\end{equation}
where $\varphi_\alpha^{k+1}=\frac{1}{\tau}\int_{I_k} \varphi_\alpha\mathrm{d}t$ for $\alpha\in\{1,3\}$, $\varphi_{2,4}^{k+1}=\varphi_{2}^{k+1}+\varphi_{4}^{k+1}$,
$$f^{k+1}=f\left(S_w^{k+1}\right),$$
for $f\in\{\lambda_{\alpha},q_{\alpha}\}$ with $\alpha\in\{w,n\}$ and $f=\varphi_{\alpha}$ with $\alpha\in\{2,4\}$, and $S_w^{k+1}= {\calS}(\mu_w^{k+1})$.
\end{define}

\begin{theorem}\label{thm semi3.2}
For each integer $0\leq k\leq N-1$, under Assumptions \ref{assum 1}-\ref{assum 4}, there exists a weak solution $(S_w^{k+1},\mu_w^{k+1},p^{k+1})$
 with $S_{\epsilon}\leq S_w^{k+1} \leq 1-S_{\epsilon}$ satisfying Definition \ref{define2}.
\end{theorem}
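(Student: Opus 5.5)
The plan follows the route announced in the introduction. We construct Galerkin approximations of \eqref{F21}, obtain them from the zeros of a vector field theorem, and pass to the limit. For fixed $k$ and $S_w^k$ with $S_\epsilon\le S_w^k\le 1-S_\epsilon$, we first truncate all coefficients using the convention of Assumption \ref{assum 4}. Thus $\lambda_\alpha,q_\alpha,\varphi_2,\varphi_4$ are frozen outside $[S_\epsilon,1-S_\epsilon]$. We also replace $\calS$ by a truncated solution operator $\widetilde{\calS}(\mu)=\calS(\max\{M_{\min},\min\{\mu,M_{\max}\}\})$, so every coefficient stays bounded and $\lambda_{\min}\le\lambda_\alpha\le\lambda_{\max}$ holds globally. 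Next we take finite-dimensional subspaces $W_m\subset W$ and $V_m\subset V$ spanned by the first $m$ elements of bases. We write $\mu_w^{k+1}=\varphi_1^{k+1}+\sum a_i w_i$ and $p^{k+1}=\varphi_3^{k+1}+\sum b_j v_j$, and define the map $\bmPhi:\mathbb{R}^{2m}\to\mathbb{R}^{2m}$ whose components are the residuals of the two equations in \eqref{F21} tested with $\xi=w_i$ and $v=v_j$. Continuity of $\bmPhi$ follows from the continuity of the coefficients in $S_w$ and of $\widetilde{\calS}$.

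The key step is the coercivity $\bmPhi(\bmalpha,\bmbeta)\cdot(\bmalpha,\bmbeta)>0$ on a large sphere. Since the cross term $\lambda_w\bm{\mathcal{K}}\nabla\mu_w\cdot\nabla p$ appears in both equations, testing with $(\xi,v)=(\mu-\varphi_1,\,p-\varphi_3)$ and adding gives the quadratic form
\[
(\lambda_t\bm{\mathcal{K}}\nabla p,\nabla p)+2(\lambda_w\bm{\mathcal{K}}\nabla\mu,\nabla p)+(\lambda_w\bm{\mathcal{K}}\nabla\mu,\nabla\mu),
\]
which equals $(\lambda_n\bm{\mathcal{K}}\nabla p,\nabla p)+(\lambda_w\bm{\mathcal{K}}\nabla(\mu+p),\nabla(\mu+p))\ge c(\|\nabla p\|^2+\|\nabla\mu\|^2)$. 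This is the energy structure in terms of $p_n,p_w$. The time term $\frac1\tau(\phi(\widetilde{\calS}(\mu)-S_w^k),\mu-\varphi_1^{k+1})$ is bounded below by $-C$, because $\widetilde{\calS}$ is bounded. The boundary terms $\varphi_1,\varphi_3$ and the sources give only linear contributions, controlled via Poincaré (using $V$, and $W$ when $\mathrm{meas}(\Gamma_1)>0$) and trace inequalities together with Young's inequality. The main obstacle is the case $\mathrm{meas}(\Gamma_1)=0$, where $W=H^1$ and $\|\mu\|_{L^2}$ is not controlled by the gradient. There I would first exploit the monotonicity of $\widetilde{\calS}$: $(\phi(\widetilde{\calS}(\mu)-S_w^k),\mu)$ grows like $c|\bar\mu|-C$ in the mean of $\mu$. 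If that is insufficient, I would add a small regularization $\delta(\mu,\xi)$ and remove it afterwards using estimates independent of $\delta$. Either way the zeros of a vector field theorem yields a Galerkin solution $(\mu_m,p_m)$.

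The coercivity estimate also gives bounds in $H^1$ for $\mu_m$ and $p_m$ that are uniform in $m$. We extract subsequences converging weakly in $H^1$ and strongly in $L^2$ (Rellich), and a.e. Then $\widetilde{\calS}(\mu_m)$ and all coefficients converge a.e. and boundedly. By dominated convergence the products $\lambda(S_m)\bm{\mathcal{K}}\nabla v$ converge strongly for fixed test functions, so we can pass to the limit in each term and use density of $\bigcup_m W_m$ and $\bigcup_m V_m$. Finally we remove the truncation with the argument of Theorem \ref{mainresult} at the discrete level. Test with $\xi=v=(\mu-M_{\min})^-$, eliminate $p$ by combining the two equations with the weights $\lambda_w(S_\epsilon)/\lambda_t(S_\epsilon)$, and use \eqref{eqassum 4.1}--\eqref{eqassum 4.2}, $\varphi_1\ge M_{\min}$, and the fact that $(\phi(\widetilde{\calS}(\mu)-S_w^k),(\mu-M_{\min})^-)\ge0$ because $\widetilde{\calS}(\mu)=S_\epsilon\le S_w^k$ where $\mu<M_{\min}$. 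This yields $\mu\ge M_{\min}$, and symmetrically $\mu\le M_{\max}$. Hence $\widetilde{\calS}(\mu)=\calS(\mu)$ and $S_\epsilon\le S_w^{k+1}\le1-S_\epsilon$.
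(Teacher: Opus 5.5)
Your route matches the paper's. It uses Galerkin approximation and the zeros-of-a-vector-field lemma, driven by the identity $(\lambda_n\bm{\mathcal{K}}\nabla v,\nabla v)+(\lambda_w\bm{\mathcal{K}}\nabla(\xi+v),\nabla(\xi+v))$. It then takes an $m$-uniform $H^1$ bound and weak/strong limits with dominated convergence in the coefficients. Finally it applies the maximum principle with $(\mu-M_{\min})^-$ after eliminating $p$ via the weight $\lambda_w(S_\epsilon)/\lambda_t(S_\epsilon)$. The truncation $\widetilde{\calS}$ is harmless but unnecessary. The paper works with $\calS$ itself, which takes values in $(0,1)$, and the coefficients are already frozen by Assumption \ref{assum 4}; the sign argument of your last step also holds for $\calS$. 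One small correction: boundedness of $\widetilde{\calS}$ alone only bounds the time term below by $-C\Vert\mu-\varphi_1^{k+1}\Vert_{L^1(\Omega)}$. The bound $-C$ holds because $\widetilde{\calS}(\mu)-S_w^k$ and $\mu-\varphi_1^{k+1}$ have the same sign wherever $\mu\notin[M_{\min},M_{\max}]$, using $M_{\min}\le\varphi_1^{k+1}\le M_{\max}$.

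The genuine gap is the case $\mathrm{meas}(\Gamma_1)=0$, which you rightly single out but do not close.

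\emph{Your first remedy fails.} With your truncation, take $S_w^k\equiv 1-S_\epsilon$, which Assumption \ref{assum 4} allows for $k=0$. Then $\widetilde{\calS}(\mu)-S_w^k=0$ wherever $\mu\ge M_{\max}$, so the time term gives no growth at all in the mean $\bar\mu$. Even without truncation, growth that is only linear in $\bar\mu$ must dominate the linear term $\bar\mu\,[(q_w,1)+\langle\varphi_4,1\rangle_{\Gamma_2}]$, and that needs a smallness condition on $\tau$. The paper does not supply this step either. Its bound $(\phi\calS(\xi)/\tau,\xi)\ge\frac{\phi_m}{\tau L_{\max}}\Vert\xi\Vert_{L^2(\Omega)}^2-\cdots$ uses \eqref{FderL} outside $[S_\epsilon,1-S_\epsilon]$, and it cannot hold, since $0<\calS<1$ makes the left side grow at most linearly.

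\emph{Your second remedy can be completed,} but the $\delta$-independent estimate has to come from the maximum principle:
\begin{enumerate}
\item Regularize with $\delta(\mu-c_0,\xi)$ for a fixed $c_0\in[M_{\min},M_{\max}]$. This gives full $H^1$-coercivity at fixed $\delta$, and it has the favourable sign on the supports of $(\mu-M_{\min})^-$ and $(\mu-M_{\max})^+$.
\item Let $m\to\infty$ at fixed $\delta$.
\item Run your maximum-principle argument to get $M_{\min}\le\mu_\delta\le M_{\max}$, hence $\Vert\mu_\delta\Vert_{L^2(\Omega)}\le C$. Together with the gradient coercivity this gives $H^1$ bounds uniform in $\delta$; it is the same device Lemma \ref{lemma 3.4} uses to control $\Vert\mu_w^{k+1}\Vert_{L^2(\Omega)}$.
\item Let $\delta\to0$.
\end{enumerate}
In this case the maximum-principle step also tests the pressure equation with $(\mu-M_{\min})^-$, which has nonzero mean and so is not in $V$. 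The equation extends to such test functions only under the compatibility condition $(q_t,1)+\langle\varphi_{2,4},1\rangle_{\Gamma_2}=0$ at $S_w^{k+1}$, a point the paper also passes over.
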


The proof of Theorem \ref{thm semi3.2} will be provided in Section 4.

\subsection{Fully discrete approximation}

Due to the density of $L^{\infty}(\Omega)\cap H^1(\Omega)$ in $H^1(\Omega)$, we can construct the finite-dimensional spaces $W_m \subset W \cap L^{\infty}(\Omega)$ and $V_m \subset V \cap L^{\infty}(\Omega)$ with the basis functions $\{\xi_i\}_{i=1}^{m}$  and $\{v_i\}_{i=1}^{m}$, respectively. Then
we obtain by replacing $V$ and $W$ with $V_m$ and $W_m$ in \eqref{F21} the following fully discrete scheme.
\begin{define}\label{define3}
For each integer $0\leq k\leq N-1$, let $S_{\epsilon}\leq S_{w}^k\leq 1-S_{\epsilon}$. Then a triplet $(S_{w,m}^{k+1}$, $\mu_{w,m}^{k+1}$, $p_m^{k+1})$ is a weak solution of the fully discrete approximation if
$$
\mu_{w,m}^{k+1}\in \varphi_1^{k+1}+W_m, \quad p_{m}^{k+1}\in \varphi_3^{k+1}+V_m
$$
satisfy
\begin{equation} \label{F23}
 \left\{
 \begin{aligned}
&\left(\lambda_{t,m}^{k+1} \bm{\mathcal{K}}\nabla p_{m}^{k+1}, \nabla v \right)\\
 =&\ -\left(\lambda_{w,m}^{k+1} \bm{\mathcal{K}}\nabla{\mu_{w,m}^{k+1}},\nabla v\right)
 +(q_{t,m}^{k+1},v)+\left<\varphi_{2,4,m}^{k+1},v\right>_{\Gamma_2},\ \forall \ v\in V_m,\\
 &\left(\phi\frac{S_{w,m}^{k+1}-S_{w}^k}{{\tau}}, \xi\right)+\left(\lambda_{w,m}^{k+1} \bm{\mathcal{K}}\nabla{\mu_{w,m}^{k+1}},\nabla \xi\right)
 \\
 =&\ -\left(\lambda_{w,m}^{k+1} \bm{\mathcal{K}}\nabla p_{m}^{k+1}, \nabla  \xi\right)
 +(q_{w,m}^{k+1},\xi)+\left<\varphi_{4,m}^{k+1},\xi\right>_{\Gamma_2},\ \forall \ \xi\in W_m,
 \end{aligned}		
 \right.
\end{equation}
where $\varphi_{2,4,m}^{k+1}=\varphi_{2,m}^{k+1}+\varphi_{4,m}^{k+1}$,
$$f_{m}^{k+1}=f(S_{w,m}^{k+1}),$$
for $f\in\{\lambda_{\alpha},q_{\alpha}\}$ with $\alpha\in\{w,n\}$ and $f=\varphi_{\alpha}$ with $\alpha\in\{2,4\}$, $S_{w,m}^{k+1}={\calS}(\mu_{w,m}^{k+1})$, and
\begin{equation} \label{231}
\mu_{w,m}^{k+1}=\varphi_1^{k+1}+\sum_{i=1}^{m} \alpha_{i}^{k+1} \xi_i, \ p_{m}^{k+1}=\varphi_3^{k+1}+\sum_{i=1}^{m} \beta_{i}^{k+1} v_{i}.
\end{equation}
\end{define}
For simplicity, we denote $\bm{\alpha}^{k+1}=(\alpha_1^{k+1}, \cdots, \alpha_m^{k+1})$, $\bmbeta^{k+1}=( \beta_1^{k+1}, \cdots, \beta_m^{k+1} )$ in the following.

\begin{theorem}\label{thm 3.1}
For each integer $0\leq k\leq N-1$, under Assumptions \ref{assum 1}-\ref{assum 4}, there exists a weak solution $(S_{w,m}^{k+1},\mu_{w,m}^{k+1},p_{m}^{k+1})$ satisfying Definition \ref{define3}.
\end{theorem}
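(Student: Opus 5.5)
We plan to apply the zeros of a vector field theorem (a corollary of Brouwer's fixed point theorem, see \cite{Evans2022}): if $\bm{P}:\mathbb{R}^{M}\to\mathbb{R}^{M}$ is continuous and $\bm{P}(\bm{z})\cdot\bm{z}\geq 0$ on $|\bm{z}|=r$ for some $r>0$, then $\bm{P}$ has a zero in the closed ball of radius $r$. We take $M=2m$ and $\bm{z}=(\bmalpha^{k+1},\bmbeta^{k+1})$, with $\mu_{w,m}^{k+1},p_m^{k+1}$ given by \eqref{231}. We set
$$P_i(\bm{z})=\big(\lambda_{t,m}^{k+1}\bm{\mathcal{K}}\nabla p_m^{k+1},\nabla v_i\big)+\big(\lambda_{w,m}^{k+1}\bm{\mathcal{K}}\nabla\mu_{w,m}^{k+1},\nabla v_i\big)-(q_{t,m}^{k+1},v_i)-\langle\varphi_{2,4,m}^{k+1},v_i\rangle_{\Gamma_2},$$
and $P_{m+i}$ as the analogous residual of the second equation of \eqref{F23} tested with $\xi_i$. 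A zero of $\bm{P}$ is exactly a weak solution in the sense of Definition \ref{define3}.

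\textbf{Continuity of $\bm{P}$.} The solution operator $\calS$ is continuous, since $\mu_w$ is a smooth, strictly increasing bijection from $(0,1)$ onto $\mathbb{R}$. By Assumptions \ref{assum 1}, \ref{assum 3} and \ref{assum 4}, $\lambda_\alpha,q_\alpha,\varphi_2,\varphi_4$ are continuous in $S_w$ and bounded, because they are frozen outside $[S_\epsilon,1-S_\epsilon]$. If $\bm{z}_j\to\bm{z}$, then $\mu_{w,m}^{k+1}$ converges in $H^1$, and also a.e. along a subsequence. Hence the nonlinear coefficients converge a.e. and stay uniformly bounded. Each entry of $\bm{P}$ is an integral against fixed $H^1$ or $L^\infty$ functions, so dominated convergence, applied along every subsequence, gives continuity.

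\textbf{Sign condition.} Write $\hat p=p_m^{k+1}-\varphi_3^{k+1}\in V_m$ and $\hat\mu=\mu_{w,m}^{k+1}-\varphi_1^{k+1}\in W_m$, so that $\bm{P}(\bm{z})\cdot\bm{z}$ is the sum of the first equation tested with $\hat p$ and the second tested with $\hat\mu$. The principal part is
$$(\lambda_t\bm{\mathcal{K}}\nabla p,\nabla p)+2(\lambda_w\bm{\mathcal{K}}\nabla p,\nabla\mu)+(\lambda_w\bm{\mathcal{K}}\nabla\mu,\nabla\mu)=(\lambda_n\bm{\mathcal{K}}\nabla p,\nabla p)+(\lambda_w\bm{\mathcal{K}}\nabla(p+\mu),\nabla(p+\mu)).$$
Since $\lambda_\alpha\geq\lambda_{\min}$, this is bounded below by $c(\|\nabla p\|^2+\|\nabla\mu\|^2)$. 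The cross terms with $\varphi_1^{k+1},\varphi_3^{k+1}\in H^1$ and the source and boundary terms are bounded by $C(1+\|\nabla\hat p\|+\|\nabla\hat\mu\|+\|\hat p\|_{L^2(\Gamma_2)}+\|\hat\mu\|_{L^2(\Gamma_2)}+\|\hat p\|+\|\hat\mu\|)$, with constants that are uniform because of the truncation. The time term $(\phi(\calS(\mu)-S_w^k)/\tau,\hat\mu)$ is at least $-C\|\hat\mu\|_{L^1}$, since $0<\calS<1$.

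\textbf{Main obstacle: coercivity in all of $\bm{z}$.} The spaces are finite dimensional, so $|\bm{z}|\to\infty$ is equivalent to $\|\hat p\|_{H^1}+\|\hat\mu\|_{H^1}\to\infty$. For $\hat p$ this norm is controlled by $\|\nabla\hat p\|$ through the Poincaré inequality on $V$, which holds both when $\Gamma_1$ has positive measure and in the mean-zero case. For $\hat\mu\in W$ the same holds if $\mathrm{meas}(\Gamma_1)>0$, and then quadratic beats linear for $r$ large.

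If $\mathrm{meas}(\Gamma_1)=0$, the constant mode of $\hat\mu$ is not controlled by the gradient, and I expect this to be the hard part. I would use the time term there. Write $\hat\mu=\bar\mu+\mu_0$ with $\bar\mu$ the mean and $\mu_0$ mean-free. Since $S_\epsilon\leq S_w^k\leq 1-S_\epsilon$ and $\calS(\mu)\to1$ (resp. $0$) as $\mu\to+\infty$ (resp. $-\infty$), the integrand $(\calS(\mu)-S_w^k)\hat\mu$ is at least $cS_\epsilon|\hat\mu|$ wherever $|\hat\mu|$ is large. This gives growth of order $S_\epsilon|\bar\mu|$ once $\|\nabla\mu_0\|$ is bounded.

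The linear source terms in the constant direction must then be absorbed. The plan is to compare the constant $c\,S_\epsilon\phi_m/\tau$ with the source bounds, or, as a fallback, to compute $\bm{P}$ restricted to the constant direction directly. On that direction the gradient terms vanish and only the source and boundary terms remain. This case may require a careful argument combining the growth of $\calS$ with the structure of the source and boundary data (compare the sign conditions \eqref{eqassum 4.1}--\eqref{eqassum 4.4}), and I would handle it separately. With coercivity established, we choose $r$ large, apply the theorem, and obtain the solution.
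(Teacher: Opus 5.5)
\textbf{Gap: the case $\mathrm{meas}(\Gamma_1)=0$ is not proved.} Your route is the paper's: the zeros-of-a-vector-field lemma, continuity of the residual map from continuity and truncation of the coefficients, and coercivity from $(\lambda_n\bm{\mathcal K}\nabla p,\nabla p)+(\lambda_w\bm{\mathcal K}\nabla(p+\mu),\nabla(p+\mu))$. For $\mathrm{meas}(\Gamma_1)>0$ your argument is complete. Poincaré on $W$ and $V$, together with the bound $-C\Vert\hat\mu\Vert_{L^1(\Omega)}$ for the time term, gives quadratic-minus-linear growth. (Fix the ordering: with $\bm z=(\bmalpha^{k+1},\bmbeta^{k+1})$, the $\xi_i$-residuals must come first so that they pair with $\bmalpha^{k+1}$.)

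The case $\mathrm{meas}(\Gamma_1)=0$ is left as a plan, and you cannot fill it by copying the paper. The paper never separates the cases. It takes the $L^2$ part of $\Vert\xi\Vert_{H^1(\Omega)}$ from the time term, claiming $(\phi\,\calS(\xi)/\tau,\xi)\ge\frac{\phi_m}{\tau L_{\max}}\Vert\xi\Vert_{L^2(\Omega)}^2-\ldots$ via \eqref{FderL}. But \eqref{FderL} holds only for saturations in $[S_\epsilon,1-S_\epsilon]$. On the sphere $\xi$ is arbitrary, $\calS$ takes values in $(0,1)$, and $\calS'(\mu)\to0$ as $|\mu|\to\infty$. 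So, as you observed, the time term grows only linearly and that quadratic bound is false.

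\textbf{The missing case does not follow from Assumptions \ref{assum 1}--\ref{assum 4} alone.} Let $\Gamma_1=\emptyset$ (so $W=H^1(\Omega)$) and $1\in W_m$, as for any standard choice. Take $\varphi_2=\varphi_4\equiv0$, $q_w\equiv Q>0$ and $q_n=Q\lambda_n/\lambda_w$. Then \eqref{eqassum 4.1}--\eqref{eqassum 4.4} hold, the first and third with equality. Testing the second equation of \eqref{F23} with $\xi=1$ kills every gradient term and gives $(\phi\,\calS(\mu_{w,m}^{k+1}),1)=(\phi S_w^k,1)+\tau Q|\Omega|$. Since $0<\calS<1$, this is impossible once $\tau Q|\Omega|\ge(\phi,1)$.

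Neither of your fallbacks therefore works unconditionally:
\begin{enumerate}
\item[(a)] Comparing $S_\epsilon\phi_m/\tau$ with the source terms is a smallness condition on $\tau$ relative to the data.
\item[(b)] The sign conditions \eqref{eqassum 4.1}--\eqref{eqassum 4.4} do not control the water source in the constant direction. You would need something like $q_w(1-S_\epsilon)\le0\le q_w(S_\epsilon)$, plus the analogue for $\varphi_4$; these do follow from \eqref{eqassum 4.1}--\eqref{eqassum 4.4} when $q_t=0$ and $\varphi_{2,4}=0$.
\end{enumerate}
With either condition added, splitting $W_m=\mathrm{span}\{1\}\oplus W_m^0$ and using $\Vert\mu_0\Vert_{L^\infty(\Omega)}\le C_m\Vert\nabla\mu_0\Vert_{L^2(\Omega)}$ on $W_m^0$ lets you close the argument for fixed $m$.

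There is another way out, again only for fixed $m$. If $1\notin W_m$, then $\Vert\nabla\cdot\Vert_{L^2(\Omega)}$ is a norm on $W_m$ (since $\Omega$ is connected), and your quadratic-versus-linear argument goes through. However, the constants degenerate as $m\to\infty$ when $\bigcup_m W_m$ is dense in $H^1(\Omega)$, so this gives nothing toward Lemma \ref{lemma 3.3}. In any case, for $\mathrm{meas}(\Gamma_1)=0$ you must state which extra hypothesis you use.
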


The proof of Theorem \ref{thm 3.1} will also be provided in Section 4.

\section{The existence of weak solutions}
In this section, we will prove the existence of weak solutions for the fully discrete scheme, fully implicit time semi-discrete scheme and weak formulation defined in Definition \ref{define3}, Definition \ref{define2} and Definition \ref{define1} respectively.
\subsection{The proof of Theorem \ref{thm 3.1}}
At the beginning of this subsection, we introduce the following lemma.
\begin{lemma}\label{lemma 3.2}
\textup{(Zeros of a vector field, Lemma in Section 1 of Chapter 9 in \cite{Evans2022})} Assume that the continuous function $\bmPhi : \mathbb{R}^m \rightarrow \mathbb{R}^m$ satisfies $\bmPhi(\bmz)\cdot \bmz\geq 0$, \red{$(\forall\ \Vert \bmz\Vert=R$ for some $R>0)$}.
Then there exists a point $\bmz_0\in \{\bm{y}:\Vert \bm{y}\Vert\leq R\}$ such that $\bmPhi(\bmz_0)=0$.
\end{lemma}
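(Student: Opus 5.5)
The plan is to argue by contradiction and reduce the statement to the Brouwer fixed point theorem on the closed ball $B_R=\{\bm{y}\in\mathbb{R}^m:\Vert \bm{y}\Vert\leq R\}$. Throughout, the hypothesis $\bmPhi(\bmz)\cdot\bmz\geq 0$ is used only on the sphere $\Vert\bmz\Vert=R$, which is exactly how the statement is phrased.

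First, suppose that $\bmPhi(\bmz)\neq 0$ for every $\bmz\in B_R$. Then the map
\begin{equation*}
\bm{g}(\bmz)=-R\,\frac{\bmPhi(\bmz)}{\Vert\bmPhi(\bmz)\Vert},\qquad \bmz\in B_R,
\end{equation*}
is well defined. It is continuous on $B_R$, because $\bmPhi$ is continuous and its norm does not vanish there. Moreover, $\Vert\bm{g}(\bmz)\Vert=R$ for every $\bmz$, so $\bm{g}$ maps the compact convex set $B_R$ into itself.

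Second, Brouwer's fixed point theorem gives a point $\bmz^\ast\in B_R$ with $\bm{g}(\bmz^\ast)=\bmz^\ast$. Since $\bm{g}$ takes values on the sphere, $\Vert\bmz^\ast\Vert=R$, so $\bmz^\ast$ lies exactly where the hypothesis applies.

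Third, I would compute
\begin{equation*}
R^2=\bmz^\ast\cdot\bmz^\ast=\bmz^\ast\cdot\bm{g}(\bmz^\ast)=-\frac{R}{\Vert\bmPhi(\bmz^\ast)\Vert}\,\bmPhi(\bmz^\ast)\cdot\bmz^\ast\leq 0,
\end{equation*}
where the final inequality uses $\bmPhi(\bmz^\ast)\cdot\bmz^\ast\geq 0$ for $\Vert\bmz^\ast\Vert=R$. This contradicts $R>0$. Hence some $\bmz_0\in B_R$ satisfies $\bmPhi(\bmz_0)=0$.

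There is no real obstacle once Brouwer's theorem is taken as known. The only point needing care is the construction of $\bm{g}$: its sign and scaling must be chosen so that the fixed point is forced onto the sphere $\Vert\bmz\Vert=R$, where the sign condition on $\bmPhi(\bmz)\cdot\bmz$ can be used.
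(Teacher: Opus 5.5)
Your proof is correct and is essentially the standard argument from the cited source (Evans, Chapter 9, Section 1): assume $\bmPhi$ has no zero in the closed ball, apply Brouwer to $-R\,\bmPhi/\Vert\bmPhi\Vert$, and use the sign condition at the resulting fixed point on the sphere to reach a contradiction. The paper itself gives no proof of this lemma and only cites Evans.
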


Then we define a mapping $\bmPhi_m^{k+1} : \mathbb R^{2m} \rightarrow \mathbb R^{2m} $ {satisfying
 $$ \bmPhi_m^{k+1} (\bm{\alpha}^{k+1},\bm{\beta}^{k+1} ) =(  \bmr_{\mu}^{k+1}, \bmr_{p}^{k+1} ) , \quad \bmr_{\mu}^{k+1}=(r_{\mu,1}^{k+1}, \cdots, r_{\mu,m}^{k+1}), \quad  \bmr_{p}^{k+1}=(r_{p,1}^{k+1}, \cdots, r_{p,m}^{k+1}) ,   $$}
where for $i=1, \cdots, m$,
\begin{equation} \label{F24}
\begin{aligned}
r_{\mu,i}^{k+1}=&\ \left(\phi \frac{\calS (\mu_{w,m}^{k+1})-S_{w}^k}{{\tau}}, \xi_i\right)
+(\lambda_{w,m}^{k+1}\bm{\mathcal{K}}\nabla{\mu_{w,m}^{k+1}},\nabla \xi_i) \\
&+(\lambda_{w,m}^{k+1}\bm{\mathcal{K}}\nabla p_{m}^{k+1}, \nabla \xi_i )
-\left\langle\varphi_{4,m}^{k+1},\xi_i\right\rangle_{\Gamma_2}-(q_{w,m}^{k+1},\xi_i), \\
r_{p,i}^{k+1}=&\ (\lambda_{t,m}^{k+1}\bm{\mathcal{K}}\nabla p_{m}^{k+1}, \nabla v_i ) \\
&+(\lambda_{w,m}^{k+1}\bm{\mathcal{K}}\nabla{\mu_{w,m}^{k+1}}, \nabla v_i)
-\left\langle\varphi_{2,4,m}^{k+1},v_i\right\rangle_{\Gamma_2}-(q_{t,m}^{k+1},v_i).
\end{aligned}
\end{equation}

\begin{lemma} \label{lemma 3.1}
Suppose that Assumptions \ref{assum 1}-\ref{assum 4} hold.
Then the mapping $\bmPhi_m^{k+1}$ is continuous.
\end{lemma}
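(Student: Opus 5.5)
We prove continuity of $\bmPhi_m^{k+1}$ by taking a sequence $(\bmalpha_j,\bmbeta_j)\to(\bmalpha,\bmbeta)$ in $\mathbb{R}^{2m}$ and showing that each component $r_{\mu,i}^{k+1}$, $r_{p,i}^{k+1}$ converges to its value at the limit. Write $\mu_j=\varphi_1^{k+1}+\sum_i\alpha_{j,i}\xi_i$ and $p_j=\varphi_3^{k+1}+\sum_i\beta_{j,i}v_i$, with $\mu$, $p$ the corresponding limits. Since the basis is fixed and finite, $\mu_j\to\mu$ and $p_j\to p$ strongly in $H^1(\Omega)$. Moreover, $\mu_j-\mu=\sum_i(\alpha_{j,i}-\alpha_i)\xi_i$ with $\xi_i\in L^\infty$, so $\mu_j\to\mu$ in $L^\infty(\Omega)$, and in particular a.e. in $\Omega$.

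Next we pass the convergence through the nonlinearities. The solution operator $\calS$ is the inverse of the strictly increasing continuous map $\mu_w$ of \eqref{F11}, so $\calS$ is continuous. By the monotonicity, $|\calS(a)-\calS(b)|\le|a-b|/c_{\min}$, because $\mu_w'\ge c_{\min}$ by Assumption \ref{assum 2}. Hence $S_j=\calS(\mu_j)\to S=\calS(\mu)$ in $L^\infty(\Omega)$, and also $0<S_j<1$. The coefficients $\lambda_\alpha$, $q_\alpha$, $\varphi_2$, $\varphi_4$ are continuous in $S_w$ by Assumptions \ref{assum 1} and \ref{assum 3}. Assumption \ref{assum 4} extends them as constants outside $[S_\epsilon,1-S_\epsilon]$, so they are continuous and bounded on $[0,1]$. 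Therefore $f(S_j)\to f(S)$ a.e. in $\Omega$ (and a.e. on $\Gamma_2$), with a uniform $L^\infty$ bound; for $\lambda_\alpha$ this bound is $\lambda_{\max}$.

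We then treat each term of \eqref{F24}. The term $(\phi(\calS(\mu_j)-S_w^k)/\tau,\xi_i)$ converges because $\phi\in L^\infty$ and $S_j\to S$ uniformly. For the diffusion terms we split
\[
\lambda_w(S_j)\bm{\mathcal{K}}\nabla\mu_j-\lambda_w(S)\bm{\mathcal{K}}\nabla\mu
=\lambda_w(S_j)\bm{\mathcal{K}}\nabla(\mu_j-\mu)+(\lambda_w(S_j)-\lambda_w(S))\bm{\mathcal{K}}\nabla\mu .
\]
The first piece tends to zero in $L^2$ by the bound $\lambda_{\max}K_{\max}\|\nabla(\mu_j-\mu)\|_{L^2}$. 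The second tends to zero in $L^2$ by dominated convergence, with dominating function $2\lambda_{\max}K_{\max}|\nabla\mu|\in L^2$. Pairing with $\nabla\xi_i\in L^2$ gives convergence. The terms involving $p$, and the $\lambda_t$ term, are handled identically. The source terms $(q_\alpha(S_j),\xi_i)$ converge by dominated convergence.

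The step needing the most care is the boundary terms $\langle\varphi_4(S_j),\xi_i\rangle_{\Gamma_2}$, because they require convergence of traces. Here we use that $\mu_j\to\mu$ in $H^1$, so the traces converge in $L^2(\Gamma_2)$, and a subsequence converges a.e. on $\Gamma_2$. Continuity of $\calS$ and $\varphi_4$, together with boundedness of $\varphi_4$ and dominated convergence on $\Gamma_2$, then give convergence along that subsequence. Applying the same argument to every subsequence shows the whole sequence converges, which completes the continuity proof.
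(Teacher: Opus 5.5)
Your proposal is correct and follows the same route as the paper. The paper's entire proof is the remark that the lemma follows directly from the continuity of $\calS$, $\lambda_\alpha$, $q_\alpha$ and $\varphi_\alpha$; you supply the details it omits: the Lipschitz bound $1/c_{\min}$ for $\calS$, the uniform bounds on the coefficients coming from their constant extension in Assumption~\ref{assum 4}, dominated convergence for the diffusion and source terms, and the trace/subsequence argument for the boundary terms on $\Gamma_2$.
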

\begin{proof}
This lemma is a direct result of the fact that ${\calS}$, $\lambda_{\alpha}~(\alpha=w,n,t)$, $q_{\alpha}\ (\alpha=w,t)$ and $\varphi_\alpha \ (\alpha=2,4)$ are continuous.
\end{proof}

\textit{Proof of Theorem \ref{thm 3.1}.}
Given $m$, we follow from Lemma \ref{lemma 3.1} and Lemma \ref{lemma 3.2} that we only need to show
$$\bmPhi_m^{k+1}(\bmalpha^{k+1},\bmbeta^{k+1})\cdot (\bmalpha^{k+1},\bmbeta^{k+1})\geq 0,\quad \forall\ \Vert (\bmalpha^{k+1},\bmbeta^{k+1})\Vert=R,$$
 for some $R>0$. Under Assumptions \ref{assum 1}-\ref{assum 4} and the fact $a^2+b^2\leq 3a^2+3(a+b)^2$,  \eqref{F24} implies that
\begin{align*}
&\bmPhi_m^{k+1}(\bmalpha^{k+1},\bmbeta^{k+1})\cdot (\bmalpha^{k+1},\bmbeta^{k+1})= (\bmr_{\mu}^{k+1},\bmr_p^{k+1})\cdot (\bmalpha^{k+1},\bmbeta^{k+1}) \\
=&\ \left(\phi \frac{\calS (\mu_{w,m}^{k+1})-S_{w}^k}{{\tau}}, \xi\right)
+\left(\lambda_{w,m}^{k+1}\bm{\mathcal{K}}\nabla{\mu_{w,m}^{k+1}},\nabla \xi\right)
+\left(\lambda_{w,m}^{k+1}\bm{\mathcal{K}}\nabla p_{m}^{k+1}, \nabla \xi\right)\\
&-\left\langle\varphi_{4,m}^{k+1},\xi\right\rangle_{\Gamma_2}-(q_{w,m}^{k+1},\xi)
+\left(\lambda_{t,m}^{k+1}\bm{\mathcal{K}}\nabla p_{m}^{k+1}, \nabla v \right) \\
&+\left(\lambda_{w,m}^{k+1}\bm{\mathcal{K}}\nabla{\mu_{w,m}^{k+1}}, \nabla v\right)
-\left\langle\varphi_{2,4,m}^{k+1},v\right\rangle_{\Gamma_2}-(q_{t,m}^{k+1},v) \\
\geq &\ \left(\phi \frac{\calS (\xi)}{{\tau}}, \xi\right)
-C\left(\frac{1}{\tau}\Vert\phi\Vert_{L^2(\Omega)}
+\lambda_{\max}\Vert \varphi_1^{k+1}\Vert_{H^1(\Omega)}\right)\Vert \xi\Vert_{H^1(\Omega)} \\
&-C\left(\lambda_{\max}\Vert \varphi_3^{k+1}\Vert_{H^1(\Omega)}
+\Vert \varphi_{4,m}^{k+1}\Vert_{L^\infty(\Gamma_2)}+\Vert q_{w,m}^{k+1} \Vert_{H^{-1}(\Omega)} \right)\Vert \xi\Vert_{H^1(\Omega)} \\
&-C\left(2\lambda_{\max}\Vert \varphi_3^{k+1}\Vert_{H^1(\Omega)}
+\lambda_{\max}\Vert \varphi_1^{k+1}\Vert_{H^1(\Omega)}
+\Vert \varphi_{2,4,m}^{k+1}\Vert_{L^\infty(\Gamma_2)}+\Vert q_{t,m}^{k+1} \Vert_{H^{-1}(\Omega)}
\right)\Vert v\Vert_{H^1(\Omega)} \\
&+\left(\lambda_{w,m}^{k+1}\bm{\mathcal{K}}\nabla(\xi+v),\nabla (\xi+v)\right)
+\left(\lambda_{n,m}^{k+1}\bm{\mathcal{K}}\nabla v, \nabla v \right) \\
\geq &\  \left(\phi \frac{\calS (\xi)}{{\tau}}, \xi\right)
+ \frac{\lambda_{\min} K_{\min}}{3} (\Vert\nabla \xi\Vert_{L^2(\Omega)}^2
+\Vert\nabla v\Vert_{L^2(\Omega)}^2)-\varepsilon(\Vert v\Vert_{H^1(\Omega)}^2+\Vert \xi\Vert_{H^1(\Omega)}^2)-C,
\end{align*}
where $\xi=\sum_{i=1}^{m} {\alpha}_{i} \xi_i$, $v=\sum_{i=1}^{m}  {\beta}_{i}v_{i}$, and $\varepsilon$ is a positive constant that can be arbitrarily small. 
On the other hand, by a simple calculation, we have
\begin{align*}
\left(\phi \frac{\calS(\xi)}{{\tau}}, \xi\right)
=&\left(\phi \frac{\calS(\xi)-\calS (0)}{{\tau}}, \xi\right)
+\left(\phi \frac{{\calS}(0)}{\tau},\xi\right)\\
\geq& \frac{\phi_m}{\tau L_{\max}}\Vert \xi\Vert_{L^2(\Omega)}^2-\frac{C}{\tau}\Vert\phi\Vert_{L^2(\Omega)}^2-\frac{\varepsilon}{\tau}\Vert \xi\Vert_{L^2(\Omega)}^2,
\end{align*}
where the inequality \eqref{FderL} is used.
Hence, it holds that
\begin{equation} \label{Eq 3.1}
\bmPhi_m^{k+1}(\bmalpha^{k+1},\bmbeta^{k+1})\cdot (\bmalpha^{k+1},\bmbeta^{k+1})\geq c(\Vert v\Vert_{H^1(\Omega)}^2+\Vert \xi\Vert_{H^1(\Omega)}^2)-C\geq 0,
\end{equation}
for a sufficiently large number $R$. This completes the proof.    $\hfill \Box$

\begin{remark}\label{remark 3.5}
We note that the phase mobility $\lambda_\alpha\geq \lambda_{\min}>0$ for $\alpha=w,n$ is used here. Suppose that the boundary conditions are homogeneous, i.e. $\varphi_1=\varphi_3\equiv0$ and $\varphi_2=\varphi_4\equiv 0$. Then, if $\lambda_w\geq 0$ and $\lambda_n \geq \lambda_{\min}>0$, we can obtain that
\begin{align*}
\bmPhi_m^{k+1}(\bmalpha^{k+1},\bmbeta^{k+1})\cdot (\bmalpha^{k+1},\bmbeta^{k+1})\geq c(\Vert v\Vert_{H^1(\Omega)}^2+\Vert \xi\Vert_{L^2(\Omega)}^2)-C.
\end{align*}
Meanwhile, if $\lambda_n\geq 0$ and $\lambda_w \geq \lambda_{\min}>0$, we also obtain that
\begin{align*}
\bmPhi_m^{k+1}(\bmalpha^{k+1},\bmbeta^{k+1})\cdot (\bmalpha^{k+1},\bmbeta^{k+1})\geq c(\Vert v+\xi\Vert_{H^1(\Omega)}^2+\Vert \xi\Vert_{L^2(\Omega)}^2)-C.
\end{align*}
Hence, there exists a weak solution of the fully discrete scheme \eqref{F23} for the above two cases for a sufficiently large number $R$ as in the proof of Theorem \ref{thm 3.1}.
\end{remark}

\subsection{The proof of Theorem \ref{thm semi3.2}}
In order to establish the existence of a weak solution for the fully implicit time semi-discrete scheme defined in Definition \ref{define2}, we present the following uniform boundedness result according to the proof of Theorem \ref{thm 3.1} provided in the above.

\begin{lemma}\label{lemma 3.3}
For each integer $0\leq k\leq N-1$, let $(S_{w,m}^{k+1},\mu_{w,m}^{k+1},p_{m}^{k+1})$ be the weak solution  of the fully discrete scheme defined in Definition \ref{define3}. Then under Assumptions \ref{assum 1}-\ref{assum 4}, we have
\begin{equation} \label{Eq 3.2}
\Vert \mu_{w,m}^{k+1}-\varphi_1^{k+1}\Vert_{H^1(\Omega)}^2+\Vert p_{m}^{k+1}-\varphi_3^{k+1}\Vert_{H^1(\Omega)}^2\leq C,
\end{equation}
where $C$ does not depend on $m$ but is related to $\frac{1}{\tau}$.
\end{lemma}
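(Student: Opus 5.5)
The plan is to reuse the coercivity estimate \eqref{Eq 3.1} from the proof of Theorem \ref{thm 3.1}, now evaluated at the actual discrete solution. Write $\xi=\mu_{w,m}^{k+1}-\varphi_1^{k+1}=\sum_i\alpha_i^{k+1}\xi_i\in W_m$ and $v=p_m^{k+1}-\varphi_3^{k+1}=\sum_i\beta_i^{k+1}v_i\in V_m$. Since the triplet solves \eqref{F23}, every component of $\bmPhi_m^{k+1}(\bmalpha^{k+1},\bmbeta^{k+1})$ in \eqref{F24} vanishes. Hence
\begin{equation*}
0=\bmPhi_m^{k+1}(\bmalpha^{k+1},\bmbeta^{k+1})\cdot(\bmalpha^{k+1},\bmbeta^{k+1}),
\end{equation*}
which amounts to testing the two equations of \eqref{F23} with $\xi$ and $v$ and adding them.

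Next I will redo the lower bound from the proof of Theorem \ref{thm 3.1} and check that every constant is independent of $m$.

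First, add the two stiffness contributions. Using $\lambda_t=\lambda_w+\lambda_n$, they combine into
\begin{equation*}
(\lambda_{w,m}^{k+1}\bm{\mathcal{K}}\nabla(\xi+v),\nabla(\xi+v))+(\lambda_{n,m}^{k+1}\bm{\mathcal{K}}\nabla v,\nabla v)
\end{equation*}
plus terms involving $\nabla\varphi_1^{k+1}$ and $\nabla\varphi_3^{k+1}$. The elementary inequality $a^2+b^2\le 3a^2+3(a+b)^2$ then gives the lower bound $\tfrac{\lambda_{\min}K_{\min}}{3}(\|\nabla\xi\|^2+\|\nabla v\|^2)$.

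Second, the source and boundary terms involve $q_\alpha(S_{w,m}^{k+1})$ and $\varphi_{2},\varphi_4$ evaluated at $S_{w,m}^{k+1}$. By Assumptions \ref{assum 1}, \ref{assum 3} and the constant extension in Assumption \ref{assum 4}, these are bounded in $L^\infty$ uniformly in $m$. I will control them with the trace inequality and Young's inequality, absorbing $\varepsilon\|\cdot\|_{H^1}^2$.

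Third, the time term is handled by splitting $\calS(\mu_{w,m}^{k+1})=[\calS(\mu_{w,m}^{k+1})-\calS(\varphi_1^{k+1})]+\calS(\varphi_1^{k+1})$. The difference term is treated with the monotonicity \eqref{FderL} (equivalently, $0<\calS'\le 1/L_{\min}$ from Assumption \ref{assum 2}). The remaining pieces, involving $S_w^k$ and $\calS(\varphi_1^{k+1})$, lie in $[0,1]$ and are bounded by $\frac{C}{\tau}\|\phi\|_{L^2}\|\xi\|_{L^2}$. The time term therefore contributes $\ge \frac{c}{\tau}\|\xi\|_{L^2}^2-\frac{C}{\tau}-\varepsilon\|\xi\|^2$.

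Collecting these three pieces reproduces \eqref{Eq 3.1}:
\begin{equation*}
0\ge c(\|\xi\|_{H^1}^2+\|v\|_{H^1}^2)-C.
\end{equation*}
This is exactly \eqref{Eq 3.2}, since here $C$ depends only on $\tau$, $\phi$, $\lambda_{\min},\lambda_{\max}$, $K_{\min},K_{\max}$, $\|\varphi_1^{k+1}\|_{H^1}$, $\|\varphi_3^{k+1}\|_{H^1}$ and the $L^\infty$ bounds of the data.

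The step I expect to need the most care is passing from gradient control to full $H^1$ control of $\xi$ and $v$.
\begin{itemize}
\item For $v$, I will use the Poincar\'e inequality in $V$: either the trace vanishes on $\Gamma_1$ with meas$(\Gamma_1)>0$, or $v$ has zero mean.
\item For $\xi$, the Poincar\'e inequality applies when meas$(\Gamma_1)>0$. Otherwise I will rely on the $L^2$ term $\frac{c}{\tau}\|\xi\|^2$ coming from the time derivative, which is where the dependence on $1/\tau$ enters.
\end{itemize}
The remaining point is to confirm that the constants coming from $\varepsilon$-absorption and from the bounds on $\calS$ involve neither $m$ nor the particular basis. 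This holds because all bounds are stated in terms of $\xi$ and $v$ directly, never in terms of the coefficient vectors.
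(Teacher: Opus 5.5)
Your route is the paper's: the paper gives no separate argument for Lemma~\ref{lemma 3.3} and reads it off from \eqref{Eq 3.1} at the zero of $\bmPhi_m^{k+1}$, exactly as you do. There is, however, a gap. It sits in the step where the time term supplies $\frac{c}{\tau}\Vert\xi\Vert_{L^2(\Omega)}^2$, and the paper's derivation of \eqref{Eq 3.1} makes the same step. A quadratic lower bound for $(\calS(\varphi_1^{k+1}+\xi)-\calS(\varphi_1^{k+1}))\xi$ needs a \emph{lower} bound $\calS'\geq 1/L_{\max}$. The bound you cite, $\calS'\le 1/L_{\min}$, points the wrong way. Moreover, \eqref{FderL} holds only for saturations in $[S_\epsilon,1-S_\epsilon]$. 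Nothing keeps the Galerkin iterate $\mu_{w,m}^{k+1}$ in $[M_{\min},M_{\max}]$: the truncation $(\mu-M_{\min})^-$ is not in $W_m$, which is why the bounds on $S_w^{k+1}$ are proved only after $m\to\infty$. Globally the claim is false. Since $\calS$ maps into $(0,1)$, one has $|(\phi(\calS(\varphi_1^{k+1}+\xi)-S_w^k),\xi)|\le\Vert\phi\Vert_{L^\infty(\Omega)}\Vert\xi\Vert_{L^1(\Omega)}$, which is only linear in $\xi$.

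\textbf{Case $\textup{meas}(\Gamma_1)>0$.} Here the gap costs nothing. The monotone part is $\geq 0$, so the time term is bounded below by $-\frac{1}{\tau}\Vert\phi\Vert_{L^\infty(\Omega)}\Vert\xi\Vert_{L^1(\Omega)}$. Absorb this by Young's inequality and use Poincar\'e in $W$ and $V$.

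\textbf{Case $\textup{meas}(\Gamma_1)=0$.} Here $W=H^1(\Omega)$, and you invoke precisely the nonexistent quadratic term to control the mean of $\xi$, so this case is not proved. What the time term really gives is linear coercivity. Since $M_{\min}\le\varphi_1^{k+1}\le M_{\max}$ and $S_\epsilon\le S_w^k\le 1-S_\epsilon$, one has pointwise $(\calS(\varphi_1^{k+1}+s)-S_w^k)s\geq\frac{S_\epsilon}{2}|s|-C_0$ with $C_0$ uniform. Hence the time term is $\geq\frac{\phi_m S_\epsilon}{2\tau}\Vert\xi\Vert_{L^1(\Omega)}-\frac{C}{\tau}$. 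This must dominate $(q_{w,m}^{k+1},\xi)+\langle\varphi_{4,m}^{k+1},\xi\rangle_{\Gamma_2}$. By the $W^{1,1}(\Omega)\to L^1(\partial\Omega)$ trace, that term is also of size $C(\Vert\xi\Vert_{L^1(\Omega)}+\Vert\nabla\xi\Vert_{L^1(\Omega)})$. So the argument closes only for $\tau$ small relative to $\Vert q_w\Vert_{L^\infty}$, $\Vert\varphi_4\Vert_{L^\infty}$ and the trace constant. Under that condition, $\Vert\xi\Vert_{L^2(\Omega)}\le C(\Vert\nabla\xi\Vert_{L^2(\Omega)}+\Vert\xi\Vert_{L^1(\Omega)})$ finishes the proof.

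Some restriction of this kind is unavoidable for the coercivity argument, and for Theorem~\ref{thm 3.1} itself. Take $\Gamma_1=\emptyset$, $q_w\equiv Q>0$, $\varphi_2=\varphi_4\equiv 0$, and $q_n$ chosen to satisfy \eqref{eqassum 4.1} and \eqref{eqassum 4.3}; suppose also $1\in W_m$. Testing \eqref{F23} with $\xi=1$ gives $\frac{1}{\tau}(\phi(S_{w,m}^{k+1}-S_w^k),1)=Q|\Omega|$. Because $S_{w,m}^{k+1}<1$, this is impossible once $Q\tau\geq\Vert\phi\Vert_{L^\infty(\Omega)}$.
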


From Lemma \ref{lemma 3.3}, it is easy to deduce that there is a subsequence (still labeled by $m$) $(\mu_{w,m}^{k+1}-\varphi_1^{k+1},p_{m}^{k+1}-\varphi_3^{k+1})$ such that there exists
a $(\mu_{w,\ast}^{k+1}, p_{\ast}^{k+1})\in W\times V$ satisfying
\begin{align}
\mu_{w,m}^{k+1}\rightharpoonup \mu_{w,\ast}^{k+1}+\varphi_1^{k+1}=\mu_{w}^{k+1},\quad
p_m^{k+1}\rightharpoonup p_{\ast}^{k+1}+\varphi_3^{k+1}=p^{k+1},\quad \text{weakly in}~H^1(\Omega).\label{WC1}
\end{align}
Then according to the Sobolev embedding theorem (see Lemma 8.3 in \cite{Gilbarg1977}), we know that
\begin{align}
\mu_{w,m}^{k+1}\rightarrow \mu_w^{k+1},\quad p_m^{k+1}\rightarrow p^{k+1},\quad \text{strongly in}~L^p(\Omega)~\text{for}~1\leq p<6,\label{SC1}
\end{align}
which, together with the definition of $\calS$ and the boundedness of $S_w^k$, leads to
\begin{align}
S_{w,m}^{k+1}=\calS (\mu_{w,m}^{k+1})\rightarrow \calS (\mu^{k+1})=S_w^{k+1},\quad \text{strongly in}~L^p(\Omega)~\text{for}~1\leq p<6.\label{SC2}
\end{align}

Now we start to prove Theorem \ref{thm semi3.2}.

\textit{Proof of Theorem \ref{thm semi3.2}.}
Firstly, we will verify that the limit $(S_w^{k+1}, \mu_w^{k+1}, p^{k+1})$ of the sequence $(S_{w,m}^{k+1}$, $\mu_{w,m}^{k+1}$, $p_m^{k+1})$ is a weak solution of the fully implicit time semi-discrete scheme defined in Definition \ref{define2}.
By letting $m\rightarrow \infty$ in \eqref{F23}, we obtain from the continuity of $q_{\alpha}~(\alpha=w,t)$ and $\varphi_{\alpha}~(\alpha=2,4)$ and \eqref{SC2} that
\begin{align*}
\left(\phi \frac{\calS(\mu_{w,m}^{k+1})-S_w^k}{{\tau}}, \xi_i\right)
\to &\ \left(\phi \frac{\calS (\mu_w^{k+1})-S_w^k}{{\tau}}, \xi_i\right),
\end{align*}
and
\begin{align*}
(q_{w,m}^{k+1},\xi_i)\to (q_w^{k+1},\xi_i), \quad&\
\left<\varphi_{4,m}^{k+1},\xi_i\right>_{\Gamma_2}
\to \left<\varphi_4^{k+1},\xi_i\right>_{\Gamma_2}, \\
(q_{t,m}^{k+1},v_i)\to (q_t^{k+1},v_i), \quad&\
\left\langle\varphi_{2,4,m}^{k+1},v_i\right\rangle_{\Gamma_2} \to \left\langle\varphi_{2,4}^{k+1},v_i\right\rangle_{\Gamma_2}.
\end{align*}
On the other hand, by a simple calculation, we have
\begin{align}
&\left(\lambda_{w,m}^{k+1}\bm{\mathcal{K}}\nabla\mu_{w,m}^{k+1},\nabla\xi_i\right)-\left(\lambda_{w}^{k+1}\bm{\mathcal{K}}\nabla\mu_{w}^{k+1},\nabla\xi_i\right)\nonumber\\
=&\left(\left(\lambda_{w,m}^{k+1}-\lambda_w^{k+1}\right)\bm{\mathcal{K}}\nabla\mu_{w,m}^{k+1},\nabla\xi_i\right)\label{thm 3.2-proof:1}\\
&+\left(\lambda_{w}^{k+1}\bm{\mathcal{K}}\nabla(\mu_{w,m}^{k+1}-\mu_w^{k+1}),\nabla\xi_i\right)=R_1^{k+1}+R_2^{k+1}.\nonumber
\end{align}
By the continuity of $\lambda_w$, \eqref{SC2} and the Lebesgue-dominated convergence theorem (see DCT in \cite{Van1968}), we obtain that
\begin{align*}
R_1^{k+1}\rightarrow 0,\quad \text{as}~m\rightarrow \infty.
\end{align*}
As for $R_2^{k+1}$, according to \eqref{WC1}, we get
\begin{align*}
R_2^{k+1}\rightarrow 0,\quad \text{as}~m\rightarrow \infty.
\end{align*}
Thus
\begin{align*}
\left(\lambda_{w,m}^{k+1}\bm{\mathcal{K}}\nabla\mu_{w,m}^{k+1},\nabla\xi_i\right)\rightarrow \left(\lambda_{w}^{k+1}\bm{\mathcal{K}}\nabla\mu_{w}^{k+1},\nabla\xi_i\right),\quad \text{as}~m\rightarrow \infty.
\end{align*}
Similarly, we have
\begin{align*}
 \left(\lambda_{w,m}^{k+1}\bm{\mathcal{K}}\nabla p_{m}^{k+1},\nabla\xi_i\right)\rightarrow \left(\lambda_w^{k+1}\bm{\mathcal{K}}\nabla p^{k+1},\nabla\xi_i\right),\quad \text{as}~m\rightarrow \infty, \\
 \left(\lambda_{t,m}^{k+1}\bm{\mathcal{K}}\nabla p_{m}^{k+1},\nabla\xi_i\right)\rightarrow \left(\lambda_t^{k+1}\bm{\mathcal{K}}\nabla p^{k+1},\nabla\xi_i\right),\quad \text{as}~m\rightarrow \infty.
\end{align*}
With all the ingredients obtained in the above, it is easy to find that $(\mu_w^{k+1},p^{k+1})$ is a weak solution of \eqref{F21}. Hence, combining $S_w^{k+1}=\calS(\mu_w^{k+1})$, we obtain that $(S_w^{k+1}, \mu_w^{k+1}, p^{k+1})$ is a weak solution of the fully implicit time semi-discrete scheme defined in Definition \ref{define2}.

Next, we will show $S_{\epsilon}\leq S_w^{k+1} \leq 1-S_{\epsilon}$.
    Similarly, we only need to prove $M_{\min}=\mu_w\left(S_{\epsilon}\right)\leq\mu_w^{k+1}\leq \mu_w\left(1-S_{\epsilon}\right)= M_{\max}$ in \eqref{F21}.
    Set $\xi=v=(\mu_w^{k+1}-M_{\min})^-=\min\{\mu_w^{k+1}-M_{\min},0\}\leq 0$ in \eqref{F21} and compare the two equations in \eqref{F21} to conclude that
\begin{align*}
 &\left(\phi \frac{{\calS} (\mu_w^{k+1})-S_w^k}{\tau}, (\mu_w^{k+1}-M_{\min})^-\right)
 +\frac{\lambda_n(S_\epsilon)}{\lambda_t(S_\epsilon)}
\left(\lambda_w(S_\epsilon)\bm{\mathcal{K}}\nabla{\mu}_w^{k+1},\nabla (\mu_w^{k+1}-M_{\min})^-\right)   \\
 =&\
 \left(q_w(S_\epsilon)-\frac{\lambda_w(S_\epsilon)}{\lambda_t(S_\epsilon)}q_t(S_\epsilon),
 (\mu_w^{k+1}-M_{\min})^-\right)
 +\left\langle\varphi_4(S_\epsilon)-\frac{\lambda_w(S_\epsilon)}{\lambda_t(S_\epsilon)}\varphi_{2,4}(S_\epsilon)
 ,(\mu_w^{k+1}-M_{\min})^-\right\rangle_{\Gamma_2}.
 \end{align*}
Combining conditions \eqref{eqassum 4.1} and \eqref{eqassum 4.2} in Assumption \ref{assum 4}, we can find that the right-hand side of the above equation is non-positive. Moreover, for the left-hand side, it is easy to see that
\begin{align*}
 \left(\phi \frac{{\calS} (\mu_w^{k+1})-S_w^k}{\tau}, (\mu_w^{k+1}-M_{\min})^-\right)
=&\ \left(\phi \frac{S_w^{k+1}-S_w^k}{\tau}, (\mu_w^{k+1}-M_{\min})^-\right) \geq 0,
\end{align*}
and
\begin{align*}
\left(\lambda_w(S_\epsilon)\bm{\mathcal{K}}\nabla \mu_w^{k+1},\nabla (\mu_w^{k+1}-M_{\min})^-\right)
= &\ \left(\lambda_w(S_\epsilon)\bm{\mathcal{K}}\nabla (\mu_w^{k+1}-M_{\min})^-,\nabla (\mu_w^{k+1}-M_{\min})^-\right)
\geq  0.
\end{align*}
Hence $\mu_w^{k+1}\geq M_{\min}$. Following analogous reasoning, $\mu_w^{k+1}\leq M_{\max}$ can also be demonstrated. Then we conclude the proof.     $\hfill \Box$

\begin{remark}
  If the phase mobility satisfies $\lambda_w\geq 0$ and $\lambda_n\geq \lambda_{\min}>0$ (or $\lambda_n\geq 0$ and $\lambda_w\geq \lambda_{\min} >0$), we can only establish a weak solution $(S_w^{k+1},\mu_w^{k+1},p^{k+1})$ in a less smooth space $L^2(\Omega)$ of \eqref{F21} with homogeneous boundary conditions, using Remark \ref{remark 3.5}.
   This is practical because it is possible that the chemical potential may lack continuity in situations where $\lambda_w=0$ or $\lambda_n=0$.
\end{remark}

\subsection{The proof of the existence for Theorem \ref{thm 2.1}}
 In this subsection, we will prove the existence of a weak solution in Theorem \ref{thm 2.1}.
Using the fact $S_w^{k} \in \left[S_{\epsilon},1-S_{\epsilon}\right]$ for every $k=0,\cdots,N$, we have the following energy stability estimate.
This property ensures that the weak solution of the fully implicit time semi-discrete scheme is uniformly bounded, which is essential for the convergence analysis.

\begin{lemma}\label{lemma 3.4.1}
For each integer $0\leq k\leq N-1$, let $(S_{w}^{k+1},\mu_w^{k+1},p^{k+1})$ be the weak solution of the fully implicit time semi-discrete scheme defined in Definition \ref{define2}. Then under Assumptions \ref{assum 1}-\ref{assum 4}, we have the following energy stability estimate:
\begin{equation} \label{Eq 3.3.1.1}
\begin{aligned}
&\ \left(\phi,F(S_w^{N})-F(S_w^0)\right)
+\frac{\phi_m c_{\min}}{2}\sum_{k=0}^{N-1} \left\Vert S_w^{k+1}-S_w^k\right\Vert_{L^2(\Omega)}^2
+\sum_{k=0}^{N-1} \tau\left( \lambda_n^{k+1} \bm{\mathcal{K}}\nabla p^{k+1},\nabla p^{k+1} \right) \\
&+\sum_{k=0}^{N-1} \tau\left(\lambda_w^{k+1} \bm{\mathcal{K}}\nabla\left( p^{k+1}+\mu_w^{k+1}\right),\nabla \left( p^{k+1}+\mu_w^{k+1}\right) \right)\\
\leq &\ \sum_{k=0}^{N-1} \left(S_w^{k+1}-S_w^{k},\phi\varphi_1^{k+1}\right)
+\sum_{k=0}^{N-1} \tau\left(\lambda_w^{k+1}\bm{\mathcal{K}}\nabla \left( p^{k+1}+\mu_w^{k+1}\right), \nabla \varphi_{1,3}^{k+1}\right) \\
&\ +\sum_{k=0}^{N-1} \tau\left(\lambda_n^{k+1}\bm{\mathcal{K}}\nabla p^{k+1}, \nabla \varphi_3^{k+1}\right)+\sum_{k=0}^{N-1} \tau\Big(
\left(q_n^{k+1},p^{k+1}-\varphi_3^{k+1}\right)
  +\left(q_w^{k+1},\mu_w^{k+1}+p^{k+1}-\varphi_{1,3}^{k+1}\right)
  \Big) \\
  &\ +\sum_{k=0}^{N-1} \tau\left(\left\langle\varphi_2^{k+1},p^{k+1}-\varphi_3^{k+1}\right\rangle_{\Gamma_2}+\left\langle\varphi_4^{k+1},\mu_w^{k+1}+p^{k+1}-\varphi_{1,3}^{k+1}\right\rangle_{\Gamma_2}
  \right),
\end{aligned}
\end{equation}
where $\varphi_{1,3}^{k+1}=\varphi_1^{k+1}+\varphi_3^{k+1}$.
\end{lemma}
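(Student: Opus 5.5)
We test the two equations of \eqref{F21} with the shifted unknowns, which lie in the correct test spaces. In the first equation we take $v=p^{k+1}-\varphi_3^{k+1}\in V$. In the second we take $\xi=\mu_w^{k+1}-\varphi_1^{k+1}\in W$. For the $p$-equation one should really test with $v=p^{k+1}+\mu_w^{k+1}-\varphi_{1,3}^{k+1}$ restricted appropriately; it is cleaner to use the rewritten form below. Subtracting the second equation from the first, the $p$-equation becomes
\[
(\lambda_n^{k+1}\bm{\mathcal{K}}\nabla p^{k+1},\nabla v)=\Big(\phi\tfrac{S_w^{k+1}-S_w^k}{\tau},\xi\Big)\Big|_{\xi=v}+(q_n^{k+1},v)+\langle\varphi_2^{k+1},v\rangle_{\Gamma_2}
\]
for $v\in V\cap W$. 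The $\mu$-equation can be written as
\[
(\phi\tfrac{S_w^{k+1}-S_w^k}{\tau},\xi)+(\lambda_w^{k+1}\bm{\mathcal{K}}\nabla(p^{k+1}+\mu_w^{k+1}),\nabla\xi)=(q_w^{k+1},\xi)+\langle\varphi_4^{k+1},\xi\rangle_{\Gamma_2}.
\]
Writing the phase fluxes as $\lambda_t\nabla p+\lambda_w\nabla\mu_w=\lambda_n\nabla p+\lambda_w\nabla(p+\mu_w)$, I test the first equation of \eqref{F21} with $v=p^{k+1}-\varphi_3^{k+1}$. I test the second with $\xi=\mu_w^{k+1}-\varphi_1^{k+1}$, and also reuse it with $\xi=p^{k+1}-\varphi_3^{k+1}$ where needed. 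The aim is to add everything so that the left side collects
\[
(\lambda_n^{k+1}\bm{\mathcal{K}}\nabla p^{k+1},\nabla p^{k+1})+(\lambda_w^{k+1}\bm{\mathcal{K}}\nabla(p^{k+1}+\mu_w^{k+1}),\nabla(p^{k+1}+\mu_w^{k+1}))+\big(\phi\tfrac{S_w^{k+1}-S_w^k}{\tau},\mu_w^{k+1}\big).
\]
The right side then collects exactly the $\varphi$-cross terms, source terms and boundary terms appearing in \eqref{Eq 3.3.1.1}. These include the term $(S_w^{k+1}-S_w^k,\phi\varphi_1^{k+1})/\tau$, which arises from moving $\varphi_1^{k+1}$ out of $\xi$. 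The time derivative pairs with $\mu_w^{k+1}$ only through the second equation, since the first has no time derivative. Hence the algebra amounts to: (first eq. with $v$) plus (second eq. with $\xi$), then rearrange $\lambda_t=\lambda_n+\lambda_w$. One has to check whether $p$ must also appear inside $\xi$; if so, I use $V\subset W$ to test the second equation with $p^{k+1}-\varphi_3^{k+1}$ as well. The first equation is then tested only so as to produce the $\lambda_n$ part.

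The key nonlinear step is the convexity-type inequality
\[
\big(\phi(S_w^{k+1}-S_w^k),\mu_w(S_w^{k+1})\big)\ \ge\ \big(\phi,F(S_w^{k+1})-F(S_w^k)\big)+\frac{\phi_m c_{\min}}{2}\|S_w^{k+1}-S_w^k\|_{L^2(\Omega)}^2 .
\]
This follows pointwise from Taylor's theorem with $\mu_w=F'$:
\[
F(S^k)=F(S^{k+1})+F'(S^{k+1})(S^k-S^{k+1})+\tfrac12F''(\theta)(S^k-S^{k+1})^2 .
\]
Here $F''(\theta)=\gamma_w/\theta+\gamma_n/(1-\theta)-2\gamma_{wn}\ge c_{\min}$ by Assumption \ref{assum 2}, and $\theta$ lies between the two saturations, so in $(0,1)$. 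This is legitimate because $S_w^k,S_w^{k+1}\in[S_\epsilon,1-S_\epsilon]$ by Theorem \ref{thm semi3.2}, so $F$ is smooth there. We also use $\phi\ge\phi_m$.

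Finally, I multiply the per-step identity by $\tau$ and sum over $k=0,\dots,N-1$. The free energy terms telescope to $(\phi,F(S_w^N)-F(S_w^0))$, which gives \eqref{Eq 3.3.1.1}. The main obstacle is the bookkeeping in the first step. The boundary data must be handled so that the test functions lie in $W$ and $V$. When $\textup{meas}(\Gamma_1)=0$, the zero-mean constraint on $V$ must be respected; in that case one uses that the equation with $v$ equal to a constant is either trivially satisfied or handled by the compatibility of $q_t$ and $\varphi_{2,4}$. The result must also produce exactly the combination $\mu_w^{k+1}+p^{k+1}-\varphi_{1,3}^{k+1}$ paired with $q_w$ and $\varphi_4$, and $p^{k+1}-\varphi_3^{k+1}$ paired with $q_n$ and $\varphi_2$. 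I would verify this by expanding $q_t=q_w+q_n$ and $\varphi_{2,4}=\varphi_2+\varphi_4$ in the first equation and grouping terms.
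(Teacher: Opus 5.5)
Your final route matches the paper's proof. You test the first equation of \eqref{F21} with $v=p^{k+1}-\varphi_3^{k+1}$ and the second with $\xi=\mu_w^{k+1}-\varphi_1^{k+1}$, add them, and regroup using $\lambda_t=\lambda_n+\lambda_w$, $q_t=q_w+q_n$, $\varphi_{2,4}=\varphi_2+\varphi_4$. You then apply the pointwise inequality $F(S_w^{k+1})-F(S_w^k)+\frac{c_{\min}}{2}(S_w^{k+1}-S_w^k)^2\le (S_w^{k+1}-S_w^k)\mu_w^{k+1}$, multiply by $\tau$ and telescope. Your Taylor-remainder proof of that inequality is an equivalent, slightly cleaner substitute for the paper's derivative computation in Appendix B.

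The detours in your first paragraph are unnecessary, as is the worry about the zero-mean constraint and compatibility of $q_t$, $\varphi_{2,4}$. The memberships $p^{k+1}-\varphi_3^{k+1}\in V$ and $\mu_w^{k+1}-\varphi_1^{k+1}\in W$ hold directly by Definition \ref{define2}.
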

\begin{proof}
 We take $v=p^{k+1}-\varphi_3^{k+1}$ in  \eqref{F21} to obtain that
 \begin{align*}
 &\left(\lambda_{t}^{k+1}\bm{\mathcal{K}}\nabla p^{k+1}, \nabla p^{k+1} \right)+\left(\lambda_w^{k+1} \bm{\mathcal{K}}\nabla \mu_w^{k+1}, \nabla p^{k+1}\right) \\
 =&\ \left(\lambda_{t}^{k+1}\bm{\mathcal{K}}\nabla p^{k+1}+\lambda_w^{k+1} \bm{\mathcal{K}}\nabla \mu_w^{k+1}, \nabla \varphi_3^{k+1} \right) +\left(q_t^{k+1},p^{k+1}-\varphi_3^{k+1}\right)
 +\left\langle\varphi_{2,4}^{k+1},p^{k+1}-\varphi_3^{k+1}\right\rangle_{\Gamma_2}.
 \end{align*}
 Selecting $\xi=\mu_w^{k+1}-\varphi_1^{k+1}$ in  \eqref{F21}, we can get that
 \begin{align*}
&\left(\phi\frac{S_w^{k+1}-S_w^{k}}{{\tau}},\mu_w^{k+1}\right)+
\left(\lambda_w^{k+1}\bm{\mathcal{K}}\nabla \mu_w^{k+1}, \nabla \mu_w^{k+1}\right)
+\left(\lambda_w^{k+1} \bm{\mathcal{K}}\nabla p^{k+1}, \nabla \mu_w^{k+1}\right) \\
= &\ \left(\phi\frac{S_w^{k+1}-S_w^{k}}{{\tau}},\varphi_1^{k+1}\right)
+\left(\lambda_w^{k+1}\bm{\mathcal{K}}\nabla (p^{k+1}+\mu_w^{k+1}), \nabla \varphi_1^{k+1}\right)
\\
&+\left(q_w^{k+1},\mu_w^{k+1}-\varphi_1^{k+1}\right)
+\left\langle\varphi_4^{k+1},\mu_w^{k+1}-\varphi_1^{k+1}\right\rangle_{\Gamma_2}.
 \end{align*}
Hence, it is easy to see by the above two equalities that
\begin{equation} \label{Eq 3.4.1.1}
\begin{aligned}
& \left(\phi\frac{S_w^{k+1}-S_w^{k}}{{\tau}},\mu_w^{k+1}\right)+\left( \lambda_w^{k+1} \bm{\mathcal{K}}\nabla \left(\mu_w^{k+1}+p^{k+1}\right),\nabla \left(\mu_w^{k+1}+p^{k+1}\right)\right)+\left( \lambda_n^{k+1} \bm{\mathcal{K}}\nabla p^{k+1},\nabla p^{k+1}\right) \\
= &\ \left(\phi\frac{S_w^{k+1}-S_w^{k}}{{\tau}},\varphi_1^{k+1}\right)
+\left(\lambda_w^{k+1}\bm{\mathcal{K}}\nabla \left(\mu_w^{k+1}+p^{k+1}\right), \nabla \varphi_{1,3}^{k+1}\right)
+\left(\lambda_n^{k+1}\bm{\mathcal{K}}\nabla p^{k+1}, \nabla \varphi_3^{k+1}\right) \\
&\ +\left(q_n^{k+1},p^{k+1}-\varphi_3^{k+1}\right)
 +\left(q_w^{k+1},\mu_w^{k+1}+p^{k+1}-\varphi_{1,3}^{k+1}\right) \\
&+\left\langle\varphi_2^{k+1},p^{k+1}-\varphi_3^{k+1}\right\rangle_{\Gamma_2}+\left\langle\varphi_4^{k+1},\mu_w^{k+1}+p^{k+1}-\varphi_{1,3}^{k+1}\right\rangle_{\Gamma_2}.
\end{aligned}
\end{equation}
On the other hand, by Appendix B, we have
\begin{equation} \label{Eq 3.4.1.2}
F(S_w^{k+1})-F(S_w^k)+\frac{c_{\min}}{2}\left(S_w^{k+1}-S_w^k\right)^2 \leq \left(S_w^{k+1}-S_w^k\right)\mu_w^{k+1}.
\end{equation}
Thus we can obtain the desired result by combining \eqref{Eq 3.4.1.1}-\eqref{Eq 3.4.1.2} and summing over $k$.
\end{proof}
\begin{remark}
If the system of two-phase flow in porous media is closed, i.e. $q_w=q_n\equiv 0$, $\varphi_1 (x,t)=\varphi_3 (x,t)\equiv 0$ and $\varphi_2 =\varphi_4 \equiv 0$, then  \eqref{Eq 3.3.1.1} can be transformed as (see Theorem 1 in \textup{\cite{KouJ2023}}):
\begin{align*}
 \left(\phi,F(S_w^{N})-F(S_w^0)\right)+\sum_{k=0}^{N-1} \tau\left(\left( \lambda_n^{k+1} \bm{\mathcal{K}}\nabla p^{k+1},\nabla p^{k+1} \right)  +\left(\lambda_w^{k+1} \bm{\mathcal{K}}\nabla\left( p^{k+1}+\mu_w^{k+1}\right),\nabla \left( p^{k+1}+\mu_w^{k+1}\right) \right) \right)
\leq  0.
\end{align*}
\end{remark}

Based on the energy stability estimate \eqref{Eq 3.3.1.1}, we have the following uniform boundedness result.
\begin{lemma}\label{lemma 3.4}
For each integer $0\leq k\leq N-1$, let $(S_{w}^{k+1},\mu_w^{k+1},p^{k+1})$ be the weak solution of the fully implicit time semi-discrete scheme defined in Definition \ref{define2}. Then under Assumptions \ref{assum 1}-\ref{assum 4}, we have
\begin{equation} \label{Eq 3.3.1}
\sum_{k=0}^{N-1} \left\Vert S_w^{k+1}-S_w^k\right\Vert_{L^2(\Omega)}^2+\sum_{k=0}^{N-1} \tau\Vert \mu_w^{k+1}-\varphi_1^{k+1}\Vert_{H^1(\Omega)}^2+\sum_{k=0}^{N-1} \tau\Vert p^{k+1}-\varphi_3^{k+1}\Vert_{H^1(\Omega)}^2\leq C,
\end{equation}
where $C$ does not depend on the time step ${\tau}$. Furthermore, there also exists a constant $C$ which does not depend on the time step size ${\tau}$ such that
\begin{equation} \label{Eq 3.4.1}
\sum_{k=0}^{N-1} \tau\left\Vert \frac{S_w^{k+1}-S_w^k}{{\tau}}\right\Vert_{H^{-1}(\Omega)}^2 \leq C, \quad
\sum_{k=0}^{N-1} \tau\left\Vert \frac{\mu_w^{k+1}-\mu_w^k}{{\tau}}\right\Vert_{H^{-1}(\Omega)}^2 \leq C,
\end{equation}
and
\begin{equation} \label{Eqnew 3.4.1}
\sum_{k=0}^{N-1} \tau\left\Vert \mu_w^{k+1}-\mu_w^k\right\Vert_{L^2(\Omega)}^2 \leq C\tau.
\end{equation}
\end{lemma}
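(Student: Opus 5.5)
Everything is derived from the energy estimate of Lemma \ref{lemma 3.4.1}, whose left-hand side is absorbed against its right-hand side, followed by duality and an interpolation argument for the time-difference bounds.

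\textbf{Step 1: lower-order terms.} Since $S_\epsilon\leq S_w^k\leq 1-S_\epsilon$ for all $k$ (Theorem \ref{thm semi3.2}), $F(S_w^k)$ is uniformly bounded. Hence $(\phi,F(S_w^N)-F(S_w^0))\geq -C$.

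\textbf{Step 2: coercivity of the dissipation.} Write $a=\nabla(p^{k+1}+\mu_w^{k+1})$ and $b=\nabla p^{k+1}$. The two dissipation terms dominate $\lambda_{\min}K_{\min}(\Vert a\Vert^2+\Vert b\Vert^2)$, which by $\Vert\nabla\mu_w\Vert^2\leq 2\Vert a\Vert^2+2\Vert b\Vert^2$ controls $c(\Vert\nabla\mu_w^{k+1}\Vert^2+\Vert\nabla p^{k+1}\Vert^2)$.

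\textbf{Step 3: the right-hand side.} The gradient terms against $\nabla\varphi_{1,3}^{k+1}$ and $\nabla\varphi_3^{k+1}$ are handled by Young's inequality, absorbing $\varepsilon$ times the dissipation. The remainder is bounded by $C\sum_k\tau\Vert\varphi_{1,3}^{k+1}\Vert_{H^1}^2\leq C\Vert\varphi_{1,3}\Vert_{L^2(0,T;H^1)}^2$, using Jensen for the time averages.
\begin{itemize}
\item The source and boundary terms are paired with $p^{k+1}-\varphi_3^{k+1}\in V$ and $\mu_w^{k+1}-\varphi_1^{k+1}\in W$. By Poincaré on $W$ and $V$ (when $\mathrm{meas}(\Gamma_1)=0$ the mean-zero condition on $V$ is used) and the trace inequality, with $q_\alpha,\varphi_{2,4}$ in $L^\infty$, these are bounded by $\varepsilon\tau(\Vert\mu_w^{k+1}-\varphi_1^{k+1}\Vert_{H^1}^2+\Vert p^{k+1}-\varphi_3^{k+1}\Vert_{H^1}^2)+C\tau$, which is then absorbed.
\item The term $\sum_k(S_w^{k+1}-S_w^k,\phi\varphi_1^{k+1})$ is the delicate one, and I expect it to be the main obstacle. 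I would sum by parts in $k$, giving $(S_w^N,\phi\varphi_1^N)-(S_w^0,\phi\varphi_1^1)-\sum_{k=1}^{N-1}(S_w^k,\phi(\varphi_1^{k+1}-\varphi_1^k))$. Since $S_w$ is bounded in $L^\infty$, this is at most $C\Vert\varphi_1\Vert_{L^\infty(0,T;L^1)}+C\Vert\partial_t\varphi_1\Vert_{L^1(0,T;L^1)}$, which is finite precisely because $\varphi_1\in W^{1,1}(0,T;L^1)$ (Assumption \ref{assum 3}). Alternatively, Young's inequality with $\frac{\phi_m c_{\min}}{4}\Vert S_w^{k+1}-S_w^k\Vert^2$ leaves $\sum\Vert\varphi_1^{k+1}\Vert^2$, which is not $\tau$-uniform; this is why summation by parts is needed.
\end{itemize}
Combining Steps 1--3 yields \eqref{Eq 3.3.1}.

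\textbf{Bounds \eqref{Eq 3.4.1}.} Take the second equation of \eqref{F21} with $\xi\in H^1_0(\Omega)\subset W$. Since $\phi\geq\phi_m$ is bounded, this gives
\[
\Big\Vert\phi\tfrac{S_w^{k+1}-S_w^k}{\tau}\Big\Vert_{H^{-1}}\leq C\big(\Vert\nabla\mu_w^{k+1}\Vert+\Vert\nabla p^{k+1}\Vert+1\big).
\]
Squaring, multiplying by $\tau$ and summing with \eqref{Eq 3.3.1} gives the $S_w$ bound. The one nuance is passing from $\phi\,\delta S$ to $\delta S$; I would either treat $\phi$ as a weight (multiplier) or interpret the dual norm with the $\phi$ weight.

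For $\mu_w$, write $\mu_w^{k+1}-\mu_w^k=g^k(S_w^{k+1}-S_w^k)$ with $g^k=\int_0^1\mu_w'(\theta S_w^{k+1}+(1-\theta)S_w^k)\,d\theta$, and $L_{\min}\leq g^k\leq L_{\max}$ by \eqref{FderL}. Testing against $\xi\in H^1_0$ requires $g^k\xi\in H^1$, but $\nabla g^k$ involves $\nabla S_w$, which is only in $L^2$. This is a second obstacle. I would instead estimate $(\mu_w^{k+1}-\mu_w^k,\xi)$ through the time-discrete structure together with the bound $\Vert\mu_w^{k+1}-\mu_w^k\Vert_{L^2}\leq L_{\max}\Vert S_w^{k+1}-S_w^k\Vert_{L^2}$. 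If that fails, I would use the fact that $\varphi_1\in H^1(0,T;H^{-1})$ handles the boundary lift, and bound $\mu_w-\varphi_1$ via interpolation, as in the next step.

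\textbf{Bound \eqref{Eqnew 3.4.1}.} By interpolation between $H^{-1}$ and $H^1$,
\[
\Vert\mu_w^{k+1}-\mu_w^k\Vert_{L^2}^2\leq\Vert\mu_w^{k+1}-\mu_w^k\Vert_{H^{-1}}\,\Vert\mu_w^{k+1}-\mu_w^k\Vert_{H^1}
\]
(using Lipschitz regularity of $\mathcal S$ and $\mathcal S^{-1}$ to pass between $S_w$ and $\mu_w$ in $L^2$). Multiplying by $\tau$, summing, and applying Cauchy--Schwarz, the right-hand side is at most $\tau\big(\sum\tau\Vert\delta\mu/\tau\Vert_{H^{-1}}^2\big)^{1/2}\big(\sum\tau\Vert\delta\mu\Vert_{H^1}^2\big)^{1/2}\leq C\tau$ by \eqref{Eq 3.4.1} and \eqref{Eq 3.3.1}. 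Alternatively, $\tau\sum\Vert\delta\mu\Vert^2\leq L_{\max}^2\tau\sum\Vert\delta S\Vert^2\leq C\tau$ follows directly from \eqref{Eq 3.3.1} and \eqref{FderL}, which is the simplest route and the one I would use.
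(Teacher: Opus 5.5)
The step that fails is the second bound in \eqref{Eq 3.4.1}, the one for $(\mu_w^{k+1}-\mu_w^k)/\tau$. Neither of your fallbacks produces it.

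Your first fallback, the $L^2$ comparison, only gives $\sum_k\tau\Vert(\mu_w^{k+1}-\mu_w^k)/\tau\Vert_{H^{-1}(\Omega)}^2\le\tau^{-1}L_{\max}^2\sum_k\Vert S_w^{k+1}-S_w^k\Vert_{L^2(\Omega)}^2\le C/\tau$. This is not uniform in $\tau$.

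Your second fallback, the interpolation inequality $\Vert u\Vert_{L^2}^2\le\Vert u\Vert_{H^{-1}}\Vert u\Vert_{H^1}$, bounds the middle norm by the outer ones. It can never deliver an $H^{-1}$ bound. For the same reason, your interpolation route to \eqref{Eqnew 3.4.1} rests on an unproved estimate. The direct route via \eqref{FderL} and \eqref{Eq 3.3.1} that you settle on is the paper's and is fine.

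The paper itself obtains this bound only by ``combining \eqref{FderL}''. That is, it transfers the pointwise estimate $|\mu_w^{k+1}-\mu_w^k|\le L_{\max}|S_w^{k+1}-S_w^k|$ to $H^{-1}$ norms, which is exactly the inference you were right to distrust. Write $\mu_w^{k+1}-\mu_w^k=g^k\,\delta S$ with $\delta S=S_w^{k+1}-S_w^k$. The factor $g^k$ is only known to be bounded; with $\gamma_\alpha\in L^\infty(\Omega)$ one does not even know $S_w^k\in H^1(\Omega)$. Multiplication by a bounded function is not a bounded operator on $H^{-1}$. Concretely, $g^k=\mu_w'(S_w^k)+\tfrac12\mu_w''(S_w^k)\,\delta S+O(|\delta S|^2)$ oscillates in phase with $\delta S$. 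So $g^k\delta S$ contains $\tfrac12\mu_w''(S_w^k)(\delta S)^2$, which has a fixed sign wherever $\mu_w''(S_w^k)$ does. Its $H^{-1}$ norm is therefore of order $\Vert\delta S\Vert_{L^2(\Omega)}^2$, however small rapid oscillation makes $\Vert\delta S\Vert_{H^{-1}}$. So you have not missed an argument that the paper supplies, but as written your proposal does not establish this half of \eqref{Eq 3.4.1}.

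Apart from this, your argument is the paper's:
\begin{itemize}
\item the energy estimate of Lemma \ref{lemma 3.4.1};
\item coercivity via $a^2+b^2\le 3a^2+3(a+b)^2$;
\item summation by parts against $\varphi_1\in W^{1,1}(0,T;L^1(\Omega))$;
\item testing the second equation of \eqref{F21} for the $S_w$-bound.
\end{itemize}
One small repair is needed. If $\mathrm{meas}(\Gamma_1)=0$, then $W=H^1(\Omega)$ has no Poincar\'e inequality. The $L^2$ part of $\mu_w^{k+1}-\varphi_1^{k+1}$ must then come from $M_{\min}\le\mu_w^{k+1}\le M_{\max}$ (Theorem \ref{thm semi3.2}), as in the paper.

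If you want a provable replacement for the $\mu_w$-bound, note that it is used only to get strong compactness of $\mu_{w,N}$ in $L^2(0,T;L^2(\Omega))$ in Section 4.3. The standard tool there is the Alt--Luckhaus time-translate estimate:
\begin{itemize}
\item for $k\ge1$, test the equations $j=k,\dots,k+l-1$ of \eqref{F21} with $\xi=(\mu_w^{k+l}-\varphi_1^{k+l})-(\mu_w^{k}-\varphi_1^{k})\in W$ and sum over $j$;
\item handle the $\varphi_1$ correction with $\varphi_1\in W^{1,1}(0,T;L^1(\Omega))$;
\item use $\phi\ge\phi_m$ together with $(\mu_w^{k+l}-\mu_w^{k})^2\le L_{\max}(\mu_w^{k+l}-\mu_w^{k})(S_w^{k+l}-S_w^{k})$, which follows from \eqref{FderL}.
\end{itemize}
This gives $\tau\sum_k\Vert\mu_w^{k+l}-\mu_w^k\Vert_{L^2(\Omega)}^2\le Cl\tau$. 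Together with \eqref{Eq 3.3.1}, this yields the compactness by Simon's criterion \cite{Simon1986}. It needs no $H^{-1}$ bound on $(\mu_w^{k+1}-\mu_w^k)/\tau$, and never requires dividing by $\phi$ in a dual norm.
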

\begin{proof}
First, from the definition of $F$, we have
$$
F_{\min}\leq F(S_w)\leq F_{\max},\quad \forall S_w\in [S_{\epsilon},1-S_{\epsilon}],
$$
where $F_{\min}$ and $F_{\max}$ are two constants.
By \eqref{Eq 3.3.1.1} and the fact $a^2+b^2\leq 3a^2+3(a+b)^2$, we can obtain
\begin{align*}
 &\ \frac{\phi_m c_{\min}}{2}\sum_{k=0}^{N-1} \left\Vert S_w^{k+1}-S_w^k\right\Vert_{L^2(\Omega)}^2
 +\frac{\lambda_{\min}K_{\min}}{3}\sum_{k=0}^{N-1} {\tau}\left(\Vert\nabla \mu_w^{k+1}\Vert_{L^2(\Omega)}^2+\Vert\nabla p^{k+1}\Vert_{L^2(\Omega)}^2\right) \\
\leq &\ (F_{\max}-F_{\min})\Vert \phi\Vert_{L^\infty(\Omega)}
+\sum_{k=0}^{N-1} \left(S_w^{k+1}-S_w^{k},\phi\varphi_1^{k+1}\right)
+\varepsilon\sum_{k=0}^{N-1} {\tau}\left(\Vert \mu_w^{k+1}\Vert_{H^1(\Omega)}^2+\Vert p^{k+1}\Vert_{H^1(\Omega)}^2\right)\\
&+C\sum_{k=0}^{N-1}\tau\left(\sum_{\alpha=1,3}\Vert\varphi_\alpha^{k+1}\Vert_{H^1(\Omega)}^2
+\sum_{\alpha=w,n}\Vert q_\alpha^{k+1}\Vert_{L^2(\Omega)}^2
+\sum_{\alpha=2,4} \Vert\varphi_\alpha^{k+1}\Vert^2_{L^\infty(\Gamma_2)}\right),
\end{align*}
where $\varepsilon$ is a positive constant that can be arbitrarily small.
Then following the facts
\begin{align*}
\Vert p^{k+1}\Vert_{H^1(\Omega)}^2\leq C\Vert\nabla p^{k+1}\Vert_{L^2(\Omega)}^2 \quad {\rm and} \quad  \Vert \mu_w^{k+1}\Vert_{L^2(\Omega)}\leq C\Vert \mu_w^{k+1}\Vert_{L^{\infty}(\Omega)}\leq C,
\end{align*}
we have
\begin{align*}
 \sum_{k=0}^{N-1} \left\Vert S_w^{k+1}-S_w^k\right\Vert_{L^2(\Omega)}^2+\sum_{k=0}^{N-1} {\tau}\left(\Vert \mu_w^{k+1}\Vert_{H^1(\Omega)}^2+\Vert p^{k+1}\Vert_{H^1(\Omega)}^2\right)
\leq  C\sum_{k=0}^{N-1}  \left(S_w^{k+1}-S_w^{k},\phi\varphi_1^{k+1}\right)+C.
\end{align*}
For the first term on the right-hand side of the above inequality, it holds from Assumption \ref{assum 3} that
\begin{align*}
 \sum_{k=0}^{N-1}  \left(S_w^{k+1}-S_w^{k},\phi\varphi_1^{k+1}\right)
= &\ \left(\phi, S_w^N\varphi_1^N-S_w^0\varphi_1^0\right)-\sum_{k=0}^{N-1}  \left(\phi S_w^{k},\varphi_1^{k+1}-\varphi_1^{k}\right) \\
\leq &\ C+\sum_{k=0}^{N-1}\int_{t_k}^{t_{k+1}}(\lvert\phi S_w^k\rvert,\lvert\partial_t\varphi_1\rvert) dt \\
\leq &\ C+\Vert \phi\Vert_{L^\infty(\Omega)} \Vert \partial_t \varphi_1 \Vert_{L^1(0,T;L^1(\Omega))} \leq  C,
\end{align*}
which directly yields \eqref{Eq 3.3.1}. Obviously \eqref{Eqnew 3.4.1} can be derived by using \eqref{FderL} and \eqref{Eq 3.3.1}.

For the proof of \eqref{Eq 3.4.1}, since $\varphi_1\in H^1(0,T;H^{-1}(\Omega))$, we have
$$
\sum_{k=0}^{N-1}\tau \left\Vert \frac{\varphi_1^{k+1}-\varphi_1^k}{\tau}\right\Vert_{H^{-1}(\Omega)}\leq
C\Vert\partial_t \phi\Vert_{L^2(0,T;H^{-1}(\Omega))}.
$$
Then by \eqref{F21} it is easy to see that for any test function $\xi\in W$, we have
\begin{align*}
&\ \left(\phi\frac{S_w^{k+1}-S_w^k}{\tau},\xi\right) \\
=&\ -\left(\lambda_w^{k+1} \bm{\mathcal{K}}\nabla\left( \mu_w^{k+1}+p^{k+1}\right), \nabla \xi\right) +\left(q_w^{k+1},\xi\right)+\left\langle\varphi_4^{k+1},\xi\right\rangle_{\Gamma_2}\\
\leq &\ \lambda_{\max}K_{\max}\left\Vert \mu_w^{k+1}+p^{k+1}\right\Vert_{H^1(\Omega)} \Vert \xi\Vert_{H^1(\Omega)}
+C\Vert q_w^{k+1}\Vert_{H^{-1}(\Omega)} \Vert \xi\Vert_{H^1(\Omega)}+\Vert\varphi_4^{k+1}\Vert_{L^\infty(\Gamma_2)} \Vert \xi\Vert_{L^2(\Gamma_2)} \\
\leq &\ C\left(\left\Vert \mu_w^{k+1}+p^{k+1} \right\Vert_{H^1(\Omega)}+\Vert q_w^{k+1}\Vert_{H^{-1}(\Omega)}
+\Vert\varphi_4^{k+1}\Vert_{L^\infty(\Gamma_2)}\right) \Vert \xi\Vert_{H^1(\Omega)}.
\end{align*}
Hence, combining the boundary, we can derive that
\begin{equation} \label{Eq 3.4.1.1.1}
\begin{aligned}
&\ \left\Vert \frac{S_w^{k+1}-S_w^k}{{\tau}}\right\Vert_{H^{-1}(\Omega)} \\
\leq &\ C\left(\left\Vert \mu_w^{k+1}+p^{k+1}\right\Vert_{H^1(\Omega)}
+\Vert q_w^{k+1}\Vert_{H^{-1}(\Omega)}+\Vert\varphi_4^{k+1}\Vert_{L^\infty(\Gamma_2)}+\left\Vert \frac{\varphi_1^{k+1}-\varphi_1^k}{\tau}\right\Vert_{H^{-1}(\Omega)} \right).
\end{aligned}
\end{equation}
Then summing the above inequality over $k$ from $0$ to $N-1$ and combining \eqref{FderL}, we get \eqref{Eq 3.4.1}.
We complete the proof.
\end{proof}

On each subinterval $I_k$, we define the piecewise constant functions
 $$
 S_{w,N}\Big|_{I_k}=S_w^{k+1}, \quad \mu_{w,N}\Big|_{I_k}=\mu_w^{k+1}, \quad p_{N}\Big|_{I_k}=p^{k+1}, \quad DS_{w,N}\Big|_{I_k}=\frac{S_w^{k+1}-S_w^{k}}{\tau}, \quad \varphi_{\alpha,N}\Big|_{I_k}=\varphi_\alpha^{k+1}
 $$
for $\alpha=1,3$, and the piecewise linear function
$$\mu_{w,N,L}\Big|_{I_k}=\frac{t_{k+1}-t}{\tau}\mu_w^{k}+\frac{t-t_k}{\tau}\mu_w^{k+1}.$$
By Lemma \ref{lemma 3.4}, we have
\begin{align*}
\sum_{k=0}^{N-1}\int_{I_k} \left\Vert \mu_{w,N,L}\right\Vert_{H^1(\Omega)}^2 \mathrm{d}t= &\ \sum_{k=0}^{N-1}\int_{I_k} \left\Vert \frac{t_{k+1}-t}{\tau}\mu_w^{k}
+\frac{t-t_k}{\tau}\mu_w^{k+1}\right\Vert_{H^1(\Omega)}^2 \mathrm{d}t \\
\leq &\ \sum_{k=0}^{N-1}\tau \left(\left\Vert \mu_w^{k}\right\Vert_{H^1(\Omega)}^2+\left\Vert \mu_w^{k+1}\right\Vert_{H^1(\Omega)}^2\right)\leq C,
\end{align*}
and $\varphi_{\alpha,N}\to \varphi_{\alpha}$ strongly in $L^2(0,T;H^1(\Omega))$ for $\alpha=1,3$ since $\varphi_{\alpha,N}$ is the piecewise constant interpolant of $\varphi_{\alpha}$ with respect to time variable.

From Lemma \ref{lemma 3.4} and the analysis above, we know that there exists a subsequence (still labeled by $N$)
$$\left\{\left(\mu_{w,N,L},\ \mu_{w,N}-\varphi_{1,N},\ \partial_t \mu_{w,N,L},\ S_{w,N},\ p_{N}-\varphi_{3,N}, \ DS_{w,N}\right)\right\}$$
such that there exists
$\left(\hat{\mu}_w,\ \mu_{w,*},\ \hat{\mu}_w',\ S_{w},\ p_*,\ S_{w}'\right)\in L^2(0,T;H^1(\Omega))\times L^2(0,T;W)\times L^2(0,T;L^2(\Omega))\times L^2(0,T;V)\times L^2(0,T;H^{-1}(\Omega))$
satisfying
\begin{equation}\label{FSweakconvergence}
    \begin{aligned}
        &\ \mu_{w,N,L}\rightharpoonup \hat{\mu}_w,\quad \mu_{w,N}\rightharpoonup \mu_{w,*}+\varphi_1= \mu_w\ \textup{weakly in}\ L^2(0,T;H^{1}(\Omega))\\
        &\ \partial_t\mu_{w,N,L}\rightharpoonup \hat{\mu}_w' \ \textup{weakly in}\ L^2(0,T;H^{-1}(\Omega)),  \\
        &\ S_{w,N}\rightharpoonup S_w\ \textup{weakly in}\ L^2(0,T;L^{2}(\Omega)),\\
        &\ p_{N}\rightharpoonup p_{*}+\varphi_3=p\ \textup{weakly in}\ L^2(0,T;H^{1}(\Omega)), \\
        &\ DS_{w,N}\rightharpoonup S_w'\ \textup{weakly in}\ L^2(0,T;H^{-1}(\Omega)),
    \end{aligned}
\end{equation}
as $N\rightarrow \infty$.
Here, we have the fact (see Appendix C) that
\begin{equation}\label{appendixC}
\hat{\mu}_w'=\partial_t \hat{\mu}_w\quad \text{and}\quad S_w'=\partial_t S_w.
\end{equation}

Since $H^1(\Omega)$ is compactly embedded in $L^2(\Omega)$ and $L^2(\Omega)$ is continuously embedded in $H^{-1}(\Omega)$, then combining Aubin-Lions-Simon lemma (see Corollary 4 in \textup{\cite{Simon1986}}), we can conclude $\mu_{w,N,L}\rightarrow \hat{\mu}_w$ strongly in $L^2(0,T;L^2(\Omega))$ and $\hat{\mu}_w\in C(0,T;L^2(\Omega))$.
\begin{lemma}\label{lemma 3.2.1}
Under assumptions \ref{assum 1}-\ref{assum 4}, we have the subsequences $\mu_{w,N}\rightarrow \mu_w$ and $S_{w,N}\rightarrow S_w$ strongly in $L^2(0,T;L^2(\Omega))$ as $N\rightarrow \infty$ with $S_w=\calS(\mu_w)$.
\end{lemma}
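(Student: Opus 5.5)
We will transfer the strong convergence already available for the piecewise linear interpolant $\mu_{w,N,L}$ (from the Aubin--Lions--Simon argument just above) to the piecewise constant interpolant $\mu_{w,N}$. We then pass it through the Lipschitz operator $\calS$, and finally identify the limits.

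\emph{Step 1.} On each $I_k$ we have
$$\mu_{w,N}-\mu_{w,N,L}=\frac{t_{k+1}-t}{\tau}\left(\mu_w^{k+1}-\mu_w^k\right).$$
Hence
$$\Vert \mu_{w,N}-\mu_{w,N,L}\Vert_{L^2(0,T;L^2(\Omega))}^2=\sum_{k=0}^{N-1}\frac{\tau}{3}\Vert \mu_w^{k+1}-\mu_w^k\Vert_{L^2(\Omega)}^2\le C\tau$$
by \eqref{Eqnew 3.4.1} of Lemma \ref{lemma 3.4}. This tends to $0$ as $N\to\infty$. Since $\mu_{w,N,L}\to\hat\mu_w$ strongly in $L^2(0,T;L^2(\Omega))$, it follows that $\mu_{w,N}\to\hat\mu_w$ strongly in the same space.

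\emph{Step 2.} Strong convergence implies weak convergence in $L^2(0,T;L^2(\Omega))$. By \eqref{FSweakconvergence}, $\mu_{w,N}\rightharpoonup\mu_w$ weakly in $L^2(0,T;H^1(\Omega))$, and hence also weakly in $L^2(0,T;L^2(\Omega))$. Uniqueness of weak limits gives $\hat\mu_w=\mu_w$, and so $\mu_{w,N}\to\mu_w$ strongly in $L^2(0,T;L^2(\Omega))$. As a by-product, $\mu_w=\hat\mu_w\in C(0,T;L^2(\Omega))$.

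\emph{Step 3.} By Theorem \ref{thm semi3.2}, $S_{w,N}=\calS(\mu_{w,N})$ takes values in $[S_\epsilon,1-S_\epsilon]$. Since $\mu_w'(S)=\gamma_w/S+\gamma_n/(1-S)-2\gamma_{wn}\ge c_{\min}$ on $(0,1)$ by Assumption \ref{assum 2}, the inverse $\calS$ is globally Lipschitz with constant $1/c_{\min}$ (equivalently, by \eqref{FderL}, with constant $1/L_{\min}$ on the relevant range). Therefore
$$\Vert \calS(\mu_{w,N})-\calS(\mu_w)\Vert_{L^2(0,T;L^2(\Omega))}\le c_{\min}^{-1}\Vert \mu_{w,N}-\mu_w\Vert_{L^2(0,T;L^2(\Omega))}\to 0.$$
Thus $S_{w,N}\to\calS(\mu_w)$ strongly in $L^2(0,T;L^2(\Omega))$. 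Because $S_{w,N}\rightharpoonup S_w$ weakly by \eqref{FSweakconvergence}, uniqueness of limits again yields $S_w=\calS(\mu_w)$ almost everywhere.

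The only delicate point is Step 1, namely making sure the interpolation error is controlled by \eqref{Eqnew 3.4.1}. This requires that $\mu_w^0$ enters the first interval consistently, so that the $k=0$ term is included in that bound. If it is not, one could instead use the bound in Lemma \ref{lemma 3.4} on $\sum\Vert S_w^{k+1}-S_w^k\Vert^2$ together with \eqref{FderL} to control the increments, since both $\mu_w^k$ and $\mu_w^{k+1}$ correspond to saturations in $[S_\epsilon,1-S_\epsilon]$ (this holds for $k=0$ by Assumption \ref{assum 4}). Either way, the increments are $O(\tau)$ in the summed sense, which suffices.
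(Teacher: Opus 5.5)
Your proposal is correct and follows the paper's proof step for step. You bound the interpolation error $\Vert\mu_{w,N}-\mu_{w,N,L}\Vert_{L^2(0,T;L^2(\Omega))}^2\le C\tau$ via \eqref{Eqnew 3.4.1}, identify $\hat\mu_w=\mu_w$ by uniqueness of weak limits, and then obtain strong convergence of $S_{w,N}=\calS(\mu_{w,N})$ from the Lipschitz bound \eqref{FderL}, which gives $S_w=\calS(\mu_w)$. The explicit factor $\tau/3$ and the constant $1/c_{\min}$ are harmless refinements. Your $k=0$ concern does not arise, since the sums in Lemma \ref{lemma 3.4} already start at $k=0$ with $S_w^0=\calS(\mu_w^0)\in[S_\epsilon,1-S_\epsilon]$.
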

\begin{proof}
From Lemma \ref{lemma 3.4}, by the definition of $\mu_{w,N,L}$ and $\mu_{w,N}$, we obtain
\begin{align*}
    \int_0^T \left\Vert \mu_{w,N,L}-\mu_{w,N}\right\Vert_{L^2(\Omega)}^2 \mathrm{d}t
     =  \sum_{k=0}^{N-1}\int_{I_k} \left\Vert \frac{t_{k+1}-t}{\tau}\left(\mu_{w}^{k+1}-\mu_{w}^{k}\right)\right\Vert_{L^2(\Omega)}^2 \mathrm{d}t
     \leq C\tau,
\end{align*}
which yields $\mu_{w,N}\rightarrow \hat{\mu}_w$ strongly in $L^2(0,T;L^2(\Omega))$ by using $\mu_{w,N,L}\rightarrow \hat{\mu}_w$ strongly in $L^2(0,T;L^2(\Omega))$.
Then combining $\mu_{w,N}\rightharpoonup \mu_w$ weakly in $L^2(0,T;H^{1}(\Omega))$, we obtain $\hat{\mu}_w=\mu_w\in C(0,T;L^2(\Omega))$ and $\mu_{w,N}\rightarrow {\mu}_w$ strongly in $L^2(0,T;L^2(\Omega))$.
On the other hand, we can obtain from the inequality \eqref{FderL} that $S_{w,N}=\calS(\mu_{w,N})\rightarrow \calS(\mu_w)$ strongly in $L^2(0,T;L^{2}(\Omega))$. Thus according to $S_{w,N}\rightharpoonup S_w$ weakly in $L^2(0,T;L^{2}(\Omega))$, we have $S_w=\calS(\mu_w)$.
\end{proof}

Now, based on the above convergence result, we prove the existence of weak solutions in Theorem \ref{thm 2.1}.

\textit{Proof of the existence of the weak solution in Theorem \ref{thm 2.1}.}  Here, we will show that the limit $\left(S_w, \mu_w, p\right)$ of the subsequence $\left(S_{w,N}, \mu_{w,N}, p_N\right)$ is a weak solution of the weak formulation defined in Definition 1.
Selecting $\xi, v\in C_0^\infty(0,T,H^1(\Omega))$ with $\xi^k=\frac{1}{\tau}\int_{I_k}\xi(t)\mathrm{d}t$, $v^k=\frac{1}{\tau}\int_{I_k}v(t)\mathrm{d}t$ in \eqref{equF20}, and letting $N\rightarrow \infty$, we can find from $DS_{w,N}\rightharpoonup \partial_t S_w$
weakly in $L^2(0,T;H^{-1}(\Omega))$ that
\begin{align*}
 \sum_{k=0}^{N-1} {\tau} \left(\phi \frac{{\calS}(\mu_{w}^{k+1})-S_{w}^{k}}{\tau}, \xi^k\right)
\rightarrow  \int_{0}^T\left(\phi \partial_t S_w, \xi\right) \mathrm{d} t.
\end{align*}
Then by $S_{w,N}\rightarrow S_w$ strongly in $L^2(0,T;L^2(\Omega))$, we can obtain from the continuity of $q_\alpha (\alpha=w,n)$ and $\varphi_\alpha (\alpha=2,4)$ that
\begin{align*}
 \sum_{k=0}^{N-1} {\tau} (q_w^{k+1},\xi^k)  \rightarrow \int_0^T (q_w,\xi) \mathrm{d} t, \quad
 \sum_{k=0}^{N-1} {\tau}\left<\varphi_4^{k+1},\xi^k\right>_{\Gamma_2}
\rightarrow &\  \int_0^T \left<\varphi_4,\xi\right>_{\Gamma_2}\mathrm{d} t, \\
 \sum_{k=0}^{N-1} {\tau}\left(q_t^{k+1},v^k\right)  \rightarrow \int_0^T (q_t,v)\mathrm{d} t, \quad
 \sum_{k=0}^{N-1} {\tau} \left<\varphi_{2,4}^{k+1},v^k\right>_{\Gamma_2}
\rightarrow &\  \int_0^T \left<\varphi_{2,4},v\right>_{\Gamma_2}\mathrm{d} t.
\end{align*}
On the other hand, by a simple calculation, we have
\begin{align*}
    &\ \sum_{k=0}^{N-1} {\tau} \left(\lambda_w^{k+1}\bm{\mathcal{K}}\nabla{\mu_{w}^{k+1}},\nabla \xi^k\right)-\int_0^T \left(\lambda_w\bm{\mathcal{K}}\nabla\mu_{w},\nabla \xi\right)\mathrm{d}t  \\
    =&\ \sum_{k=0}^{N-1} \int_{I_k} \left(\lambda_w^{k+1}\bm{\mathcal{K}}\nabla{\mu_{w}^{k+1}},\nabla (\xi^k-\xi)\right) \mathrm{d}t
    +\sum_{k=0}^{N-1} \int_{I_k} \left(\left(\lambda_w^{k+1}-\lambda_w\right)\bm{\mathcal{K}}\nabla{\mu_{w}^{k+1}},\nabla \xi\right) \mathrm{d}t\\
    &\ +\sum_{k=0}^{N-1} \int_{I_k}\left(\lambda_w\bm{\mathcal{K}}\nabla\left(\mu_{w}^{k+1}-\mu_w\right),\nabla \xi\right) \mathrm{d}t  = R_1+R_2+R_3.
\end{align*}
Using the continuity of $\lambda_w$, $S_{w,N}\rightarrow S_w$ strongly in $L^2(0,T;L^2(\Omega))$, and the Lebesgue dominated convergence theorem (see DCT in \cite{Van1968}), we get
$$R_2\rightarrow 0,\quad \textup{as}\ N\rightarrow \infty.$$
As for $R_1$ and $R_3$, according to $\mu_{w,N}\rightharpoonup \mu_w$ weakly in $L^2(0,T;H^{1}(\Omega))$ and $\xi^k=\frac{1}{\tau}\int_{I_k}\xi(t)\mathrm{d}t$, we obtain
$$R_1\rightarrow 0,\quad R_3\rightarrow 0,\quad \textup{as}\ N\rightarrow \infty.$$
Thus,
\begin{align*}
\sum_{k=0}^{N-1} {\tau}\left(\lambda_w^{k+1}\bm{\mathcal{K}}\nabla{\mu_{w}^{k+1}},\nabla \xi^k\right)
 \rightarrow  \int_0^T \left(\lambda_w\bm{\mathcal{K}}\nabla{\mu_w},\nabla \xi\right) \mathrm{d} t.
 \end{align*}
 Similarly, we can deduce that
 \begin{align*}
 \sum_{k=0}^{N-1} {\tau}\left(\lambda_w^{k+1}\bm{\mathcal{K}}\nabla p^{k+1}, \nabla \xi^k \right)
\rightarrow &\ \int_0^T \left(\lambda_w\bm{\mathcal{K}}\nabla p, \nabla \xi \right) \mathrm{d} t, \\
\sum_{k=0}^{N-1} {\tau}\left(\lambda_t^{k+1}\bm{\mathcal{K}}\nabla p^{k+1}, \nabla v^k \right)
\rightarrow &\ \int_0^T \left(\lambda_t\bm{\mathcal{K}}\nabla p, \nabla v \right)\mathrm{d} t.
\end{align*}
 With all ingredients obtained in the above together, it is easy to verify that $(\mu_w, p)$ is a weak solution of the equation \eqref{equF20}, which completes the proof due to the fact $S_w=\calS(\mu_w)$ and Theorem \ref{mainresult}. $\hfill \Box$

\section{Uniqueness of the weak solution}\label{section5}

In order to prove that the weak solution of the weak formulation defined in Definition 1 is unique, we further introduce the following assumption and lemma used in \cite{Evans2022}.
\begin{assum}\label{assum 5}
 Assume that $\lambda_w$, $\lambda_n$, $q_w$, $q_n$, $\varphi_2$ and $\varphi_4$ are Lipschitz continuous with respect to $S_w$. Moreover, we assume the boundary $\varphi_1\in H^1(0,T;W^{-1,\infty}(\Omega))$ and the chemical potential $\mu_w$ is Lipschitz continuous in space and the associated Lipschitz constant belongs to $L^2$ in time, i.e. there exists a constant $C=C(t)$ such that
 \begin{equation}\label{eqassum 5.1}
\left|\mu_w(\bm{x}_1,t)-\mu_w(\bm{x}_2,t)\right| \leq C(t)|\bm{x}_1-\bm{x}_2|, \ \forall \bm{x}_1, \bm{x}_2\in \Omega,
\end{equation}
and $C(t)\in L^2([0,T])$.
\end{assum}

\begin{lemma}\label{lemma 5.2}
\textup{(Characterization of $W^{1,\infty}$, Theorem 4 in Section 8 of Chapter 5 in \cite{Evans2022})} Let $u:\Omega\to \mathbb{R}$ is Lipschitz continuous, then $u\in W^{1,\infty}(\Omega)$.
\end{lemma}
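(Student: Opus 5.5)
The plan is the classical difference-quotient argument, done on compactly contained subdomains and then globalized; no result of the paper beyond elementary facts is needed. Let $L$ be the Lipschitz constant of $u$, so that $|u(\bm{x}_1)-u(\bm{x}_2)|\leq L|\bm{x}_1-\bm{x}_2|$ for all $\bm{x}_1,\bm{x}_2\in\Omega$.

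\emph{Step 1: $u\in L^\infty(\Omega)$.} Since $\Omega$ is bounded, fixing any $\bm{x}_0\in\Omega$ gives $|u(\bm{x})|\leq |u(\bm{x}_0)|+L\,\mathrm{diam}(\Omega)$ for all $\bm{x}\in\Omega$. In particular $u$ is continuous and bounded, hence $u\in L^\infty(\Omega)$.

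\emph{Step 2: weak derivatives on compact subdomains.} Fix an open $U\subset\subset\Omega$ and $0<|h|<\mathrm{dist}(U,\partial\Omega)$. For $i=1,\dots,d$ consider the difference quotient
$$D_i^h u(\bm{x})=\frac{u(\bm{x}+h\bm{e}_i)-u(\bm{x})}{h},\qquad \bm{x}\in U .$$
By the Lipschitz property, $\Vert D_i^h u\Vert_{L^\infty(U)}\leq L$ uniformly in $h$. Since $L^\infty(U)$ is the dual of the separable space $L^1(U)$, Banach--Alaoglu gives a sequence $h_j\to 0$ and $g_i\in L^\infty(U)$ with $D_i^{h_j}u\overset{*}{\rightharpoonup} g_i$ in $L^\infty(U)$ and $\Vert g_i\Vert_{L^\infty(U)}\leq L$.

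To identify $g_i$ as the weak derivative, take $\psi\in C_c^\infty(U)$ and use the discrete integration by parts, obtained by a change of variables and valid for $|h|$ small so that the supports stay inside $U$:
$$\int_U u\,D_i^{-h}\psi\,\mathrm{d}x=-\int_U D_i^{h}u\,\psi\,\mathrm{d}x .$$
As $h=h_j\to0$, $D_i^{-h}\psi\to\partial_i\psi$ uniformly with supports in a fixed compact set, and $u$ is bounded, so the left side tends to $\int_U u\,\partial_i\psi\,\mathrm{d}x$. The right side tends to $-\int_U g_i\psi\,\mathrm{d}x$ by weak-$*$ convergence, because $\psi\in L^1(U)$. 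Hence $\partial_i u=g_i$ weakly in $U$, with $|\partial_i u|\leq L$ a.e. in $U$.

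\emph{Step 3: globalization.} Exhaust $\Omega$ by an increasing sequence $U_1\subset\subset U_2\subset\subset\cdots$ with $\bigcup_n U_n=\Omega$. By uniqueness of weak derivatives, the functions $g_i$ obtained on different $U_n$ agree a.e. on overlaps, so they define a single $g_i$ on $\Omega$ with $|g_i|\leq L$ a.e. For any $\psi\in C_c^\infty(\Omega)$, the support of $\psi$ lies in some $U_n$, so $g_i$ is the weak derivative of $u$ on all of $\Omega$. Combining with Step 1 yields $u\in W^{1,\infty}(\Omega)$ and $\Vert\nabla u\Vert_{L^\infty(\Omega)}\leq \sqrt{d}\,L$.

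The only delicate step is the identification in Step 2. Boundedness of the difference quotients in $L^\infty$ does not give strong or pointwise convergence, so compactness must come from the weak-$*$ topology, and the limit must be paired only against $L^1$ test functions. This is why the derivative is moved onto $\psi$ through the discrete integration by parts rather than passing to the limit in $D_i^h u$ directly. Restricting to $U\subset\subset\Omega$ keeps the shifts $\bm{x}+h\bm{e}_i$ inside $\Omega$, so no boundary regularity is required.
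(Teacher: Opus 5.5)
Your proof is correct and follows the same argument as the source the paper cites (Evans, Theorem 4 in \S5.8): the difference quotients are bounded in $L^\infty$ by the Lipschitz constant, a weak-$*$ limit is extracted, and the discrete integration by parts identifies it as the weak derivative. Your one deviation is to localize to $U\subset\subset\Omega$ and glue, rather than reducing to compactly supported functions on $\mathbb{R}^d$. This is a small gain for two reasons: it needs no boundary regularity (Evans assumes a $C^1$ boundary, while the paper's $\Omega$ is only Lipschitz), and it records the bound $\Vert\nabla u\Vert_{L^\infty(\Omega)}\leq\sqrt{d}\,L$, which Appendix D actually needs to conclude $\mu_1\in L^2(0,T;W^{1,\infty}(\Omega))$.
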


Then we have the following result for the uniqueness of the weak solution. 

\begin{theorem}\label{thm 2.1.1}
Under Assumptions \ref{assum 1}-\ref{assum 5}, the weak solution of the weak formulation defined in Definition \ref{define1} is unique.
\end{theorem}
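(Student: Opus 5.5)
We take two weak solutions $(S_w^{(i)},\mu_w^{(i)},p^{(i)})$, $i=1,2$, with the same data, and assume $\mu_w^{(1)}$ (say) satisfies \eqref{eqassum 5.1}. By Lemma \ref{lemma 5.2}, $\nabla\mu_w^{(1)}\in L^2(0,T;L^\infty(\Omega))$ with $\Vert\nabla\mu_w^{(1)}(t)\Vert_{L^\infty}\le C(t)$. We set
\[
\delta\mu=\mu_w^{(1)}-\mu_w^{(2)}\in L^2(0,T;W),\quad \delta p=p^{(1)}-p^{(2)}\in L^2(0,T;V),\quad \delta S=S_w^{(1)}-S_w^{(2)}.
\]
By Theorem \ref{mainresult} both saturations lie in $[S_\epsilon,1-S_\epsilon]$. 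Hence \eqref{FderL} gives $L_{\min}|\delta S|\le|\delta\mu|\le L_{\max}|\delta S|$ and $\delta S\,\delta\mu\ge L_{\min}|\delta S|^2$. The Lipschitz continuity in Assumption \ref{assum 5} gives $|\lambda_\alpha^{(1)}-\lambda_\alpha^{(2)}|+|q_\alpha^{(1)}-q_\alpha^{(2)}|\le C|\delta S|$ pointwise, and the same bound holds for $\varphi_2,\varphi_4$ on $\Gamma_2$.

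We subtract the two weak formulations \eqref{equF20} on $(0,t)$ and test with $\xi=\delta\mu$ in the saturation equation and $v=\delta p$ in the pressure equation. Adding them, the leading terms combine, exactly as in the energy identity of Lemma \ref{lemma 3.4.1}, into
\[
\big(\lambda_w^{(2)}\bm{\mathcal{K}}\nabla(\delta\mu+\delta p),\nabla(\delta\mu+\delta p)\big)+\big(\lambda_n^{(2)}\bm{\mathcal{K}}\nabla\delta p,\nabla\delta p\big)\ge \tfrac{\lambda_{\min}K_{\min}}{3}\big(\Vert\nabla\delta\mu\Vert^2+\Vert\nabla\delta p\Vert^2\big).
\]
The mobility-difference terms are $((\lambda_w^{(1)}-\lambda_w^{(2)})\bm{\mathcal{K}}\nabla(\mu_w^{(1)}+p^{(1)}),\nabla(\delta\mu+\delta p))$ and $((\lambda_n^{(1)}-\lambda_n^{(2)})\bm{\mathcal{K}}\nabla p^{(1)},\nabla\delta p)$.

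For the time-derivative term $\int_0^t(\phi\partial_t\delta S,\delta\mu)$ we would like a Lyapunov functional. The natural choice is $\Psi(t)=\int_\Omega\phi\,G(x,t)\,\mathrm{d}x$ with $G=\int_{\mu_w^{(2)}}^{\mu_w^{(1)}}(\calS(s)-\calS(\mu_w^{(2)}))\,\mathrm{d}s\sim|\delta S|^2$. Its time derivative contains a cross term involving $\partial_t\mu_w^{(2)}$, which is only in $H^{-1}$. To avoid this, I would instead apply the test function $\delta\mu$ through a duality argument. The plan is to use an equivalent $H^{-1}$-type estimate: test the saturation equation with $\xi=\delta\mu$ and use the chain-rule inequality from Appendix A to obtain $\int_0^t(\phi\partial_t\delta S,\delta\mu)\ge c\Vert\delta S(t)\Vert^2_{L^2}$ minus a term controlled by $\int_0^t\Vert\delta S\Vert^2$. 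This step is where the extra hypothesis $\varphi_1\in H^1(0,T;W^{-1,\infty})$ is needed, because it handles the boundary lift. I expect this to be the \textbf{main obstacle}. If the direct route fails, the fallback is to set $\mathcal{S}^{-1}=\mu_w$, write $\delta\mu=a(x,t)\delta S$ with $a\in[L_{\min},L_{\max}]$, and work with $\tfrac12\frac{d}{dt}\int\phi|\delta S|^2$ via a regularization in time (Steklov averages), absorbing the commutator.

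The difference terms are bounded using $\nabla\mu_w^{(1)}\in L^\infty$. We would need $\nabla p^{(1)}$ also bounded, which fails. So the plan is to rewrite the mobility differences through total velocity: the pressure equation expresses $\lambda_t\nabla p+\lambda_w\nabla\mu_w$, and we combine the equations so that the differences multiply only $\nabla\mu_w^{(1)}$ (the $\frac{\lambda_w\lambda_n}{\lambda_t}\nabla\mu$ fractional-flow structure) plus terms like $(\delta f_w\,\mathbf{u}_t^{(1)},\nabla\delta\mu)$. Here the total velocity $\mathbf{u}_t^{(1)}$ is handled in the same way as in Chen's \cite{Chen2002} argument, or via the stated Lipschitz hypothesis. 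The resulting bounds are
\[
|((\delta\lambda)\bm{\mathcal{K}}\nabla\mu_w^{(1)},\nabla\delta\mu)|\le C\,C(t)\Vert\delta S\Vert\Vert\nabla\delta\mu\Vert\le\varepsilon\Vert\nabla\delta\mu\Vert^2+C_\varepsilon C(t)^2\Vert\delta S\Vert^2 .
\]
The source and boundary terms are bounded by $C\Vert\delta S\Vert(\Vert\delta\mu\Vert_{H^1}+\Vert\delta p\Vert_{H^1})$, using the trace inequality for the $\Gamma_2$ terms. After absorbing the gradients, we reach
\[
\Vert\delta S(t)\Vert^2\le C\int_0^t(1+C(s)^2)\Vert\delta S(s)\Vert^2\,\mathrm{d}s .
\]
Since $C(\cdot)\in L^2$, the weight is integrable and Grönwall gives $\delta S\equiv0$. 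Then $\delta\mu=0$ by \eqref{FderL}, and the pressure equation with coercivity on $V$ gives $\delta p=0$.
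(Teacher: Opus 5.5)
Your skeleton matches the paper's: subtract the two formulations, test with $\delta\mu$ and $\delta p$, get coercivity from $a^2+b^2\le 3a^2+3(a+b)^2$, bound coefficient differences by Lipschitz continuity, and close with Gr\"onwall. However, the two steps that carry the actual difficulty are left open, so the argument does not close as written.

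The first open step is the time-derivative term. The chain-rule identity of Appendix A works only because the threshold $M_{\min}$ is a constant. For two solutions the reference state moves in time. As you noticed, you are then left with a quantity quadratic in $\delta S$ paired with the time derivative of one solution, and that derivative is only in $L^2(0,T;H^{-1}(\Omega))$. An $H^{-1}$--$H^1$ pairing yields no Gr\"onwall term, because the only available bound, $\Vert\nabla(\delta S)^2\Vert_{L^2}\le 2\Vert\delta S\Vert_{L^\infty}\Vert\nabla\delta S\Vert_{L^2}$, is linear in the error. What works is a pairing against $W^{1,1}$, since $\Vert e^2\Vert_{W^{1,1}}\le C\Vert e\Vert_{L^2}\Vert e\Vert_{H^1}$. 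Your Steklov fallback has the same hole. With $\delta\mu=a\,\delta S$ one gets $(\phi\partial_t\delta S,\delta\mu)=\tfrac12\tfrac{d}{dt}(\phi a,|\delta S|^2)-\tfrac12(\phi\,\partial_t a,|\delta S|^2)$, and this commutator cannot be absorbed without a bound on $\partial_t a$.

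The paper supplies exactly such a bound. It writes $e_S=\calS'(\eta)e_\mu$ and splits off $\tfrac12(\phi\,\partial_t\calS'(\eta),e_\mu^2)$. It then estimates this term by $\Vert\partial_t\calS'(\eta)\Vert_{W^{-1,\infty}(\Omega)}\Vert e_\mu^2\Vert_{W^{1,1}(\Omega)}\le\varepsilon\Vert\nabla e_\mu\Vert_{L^2(\Omega)}^2+C(1+\Vert\partial_t\calS'(\eta)\Vert_{W^{-1,\infty}(\Omega)}^2)\Vert e_\mu\Vert_{L^2(\Omega)}^2$, using $\partial_t\calS'(\eta)\in L^2(0,T;W^{-1,\infty}(\Omega))$ from \eqref{appendixD}. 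That is where $\varphi_1\in H^1(0,T;W^{-1,\infty}(\Omega))$ enters: it is used to place $\partial_t S_w$, and hence $\partial_t\calS'(\eta)$, in that space. It also adds $\Vert\partial_t\calS'(\eta)\Vert_{W^{-1,\infty}(\Omega)}^2$ to the Gr\"onwall weight, which your final inequality omits.

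The second open step is the set of $\nabla p^{(1)}$ terms. You are right that \eqref{eqassum 5.1} alone does not put $\nabla p^{(1)}$ in $L^\infty$. But the fractional-flow rewrite only relocates the problem. The term $(\delta f_w\,\mathbf{u}_t^{(1)},\nabla\delta\mu)$ needs $\mathbf{u}_t^{(1)}\in L^2(0,T;L^\infty(\Omega))$. Since $\mathbf{u}_t^{(1)}=-\bm{\mathcal{K}}(\lambda_t^{(1)}\nabla p^{(1)}+\lambda_w^{(1)}\nabla\mu_w^{(1)})$ contains $\nabla p^{(1)}$ again, ``the stated Lipschitz hypothesis'' does not give it. The same issue reappears in the mobility-difference term tested with $\nabla\delta p$. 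Handling it ``as in Chen'' means assuming a Lipschitz global pressure, which is exactly the hypothesis this theorem is designed to replace.

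The paper's substitute is the second estimate in \eqref{appendixD}, derived from the pressure equation: $\int_0^T(\nabla p_1,f\nabla z)\,\mathrm{d}t\le C\int_0^T(\Vert\mu_1\Vert_{W^{1,\infty}(\Omega)}+\Vert q_t\Vert_{L^\infty(\Omega)}+\Vert\varphi_{2,4}\Vert_{L^\infty(\Gamma_2)})\Vert f\Vert_{L^2(\Omega)}\Vert\nabla z\Vert_{L^2(\Omega)}\,\mathrm{d}t$. It is applied with $f$ the mobility difference, so $\Vert f\Vert_{L^2}\le C\Vert e_\mu\Vert_{L^2}$, and with $z=e_p,e_\mu$. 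This puts $\Vert\mu_1\Vert_{W^{1,\infty}(\Omega)}^2$ and the data norms into the Gr\"onwall weight. Be aware that Appendix D obtains this estimate by choosing $\nabla v=f\nabla z$ in the pressure equation, which presupposes that $f\nabla z$ is a gradient. Your doubt therefore targets the most delicate point of the paper's proof as well. Still, your proposal contains neither this estimate nor a working replacement, so the Gr\"onwall inequality you end with is not established.
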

\begin{proof}
Let $(S_{1},\mu_{1},p_{1})$ and $(S_{2},\mu_{2},p_{2})$ be two of weak solutions for the weak formulation defined in Definition \ref{define1}. By denoting $e_{S}=S_2-S_1$, $e_{\mu}=\mu_2-\mu_1$ and $e_p=p_2-p_1$, and setting $\xi=e_{\mu}$ and $v=e_{p}$, we can directly deduce from \eqref{equF20} and the Cauchy-Schwarz inequality that
\begin{align*}
 &\int_0^T \left(\phi \partial_t e_S, e_{\mu}\right) \mathrm{d} t
 +\int_0^T \left(\lambda_w(\mu_2)\bm{\mathcal{K}}\nabla \left(e_{\mu}+e_p\right),\nabla \left(e_{\mu}+e_p\right) \right) \mathrm{d} t
 +\int_0^T \left(\lambda_n(\mu_2)\bm{\mathcal{K}}\nabla e_p, \nabla e_{p} \right)\mathrm{d} t \\
 =&\ -\int_0^T \left((\lambda_n(\mu_2)-\lambda_n(\mu_1))\bm{\mathcal{K}}\nabla {p_1},\nabla e_{p}\right)\mathrm{d} t
  -\int_0^T \left((\lambda_w(\mu_2)-\lambda_w(\mu_1))\bm{\mathcal{K}}\nabla\left(\mu_1+p_1\right),
   \nabla \left(e_{\mu}+e_p\right) \right)\mathrm{d} t\\
  &+\int_0^T (q_w(\mu_2)-q_w(\mu_1),e_{\mu})\mathrm{d} t
 +\int_0^T \left\langle\varphi_4(\mu_2)-\varphi_4(\mu_1),e_{\mu}\right\rangle_{\Gamma_2} \mathrm{d} t  \\
  &+\int_0^T (q_t(\mu_2)-q_t(\mu_1),e_{p})\mathrm{d} t
 +\int_0^T \left\langle\varphi_{2,4}(\mu_2)-\varphi_{2,4}(\mu_1),e_{p}\right\rangle_{\Gamma_2}\mathrm{d}t \\
 \leq &\ -\int_0^T \left((\lambda_n(\mu_2)-\lambda_n(\mu_1))\bm{\mathcal{K}}\nabla {p_1},\nabla e_{p}\right)\mathrm{d} t
  -\int_0^T \left((\lambda_w(\mu_2)-\lambda_w(\mu_1))\bm{\mathcal{K}}\nabla\left(\mu_1+p_1\right) ,
   \nabla \left(e_{\mu}+e_p\right) \right)\mathrm{d} t\\
 &+C\int_0^T \Vert e_\mu\Vert_{L^2(\Omega)}^2 \mathrm{d} t
 +\varepsilon\left(\Vert e_\mu\Vert_{L^2(0,T;H^1(\Omega))}^2+\Vert e_p\Vert_{L^2(0,T;H^1(\Omega))}^2\right),
 \end{align*}
where $\varepsilon$ is a positive constant that can be arbitrarily small.
Here, the Lipschitz continuity of $q_\alpha \ (\alpha=w,n), \ \varphi_\alpha \ (\alpha=2,4)$ and the trace theorem (see Lemma 16.1 in \cite{Tartar2007}) are utilized in the last inequality above.
Next, using the fact $a^2+b^2\leq 3a^2+3(a+b)^2$ and the estimate (see Appendix D)
\begin{equation}\label{appendixD}
\left\{\begin{aligned}
&  \Vert \mu_1\Vert_{L^2(0,T;W^{1,\infty}(\Omega))}+\Vert \partial_t {\calS}' (\eta)\Vert_{L^2(0,T;W^{-1,\infty}(\Omega))}\leq C,\\
& \int_0^T \left(\nabla p_1,f\nabla z\right)\mathrm{d}t
\leq C\int_0^T \left(\Vert \mu_1\Vert_{W^{1,\infty}(\Omega)}
+\Vert q_t\Vert_{L^\infty(\Omega)}+\Vert \varphi_{2,4} \Vert_{L^\infty(\Gamma_2)}\right)
\Vert f\Vert_{L^2(\Omega)}\Vert\nabla z\Vert_{L^2(\Omega)}\mathrm{d}t,
\end{aligned}
\right.
\end{equation}
and taking $f=e_{\mu}$ and $z=e_p, e_{\mu}$ respectively, we can obtain from the Cauchy-Schwarz inequality and the Lipschitz continuity of $\lambda_\alpha \ (\alpha=w,n)$ that
 \begin{align*}
 &\ \int_0^T \left(\phi \partial_t e_S, e_{\mu}\right) \mathrm{d} t
 +\frac{\lambda_{\min}K_{\min}}{3}\left(\Vert e_\mu\Vert_{L^2(0,T;H^1(\Omega))}^2+\Vert e_p\Vert_{L^2(0,T;H^1(\Omega))}^2\right) \\
  \leq &\ C\int_0^T
  \left(\Vert \mu_1\Vert_{W^{1,\infty}(\Omega)}^2
+\Vert q_t\Vert_{L^\infty(\Omega)}^2+\Vert \varphi_{2,4} \Vert_{L^\infty(\Gamma_2)}^2\right)
 \Vert e_\mu\Vert_{L^2(\Omega)}^2 \mathrm{d} t \\
  &\ +\varepsilon\left(\Vert e_\mu\Vert_{L^2(0,T;H^1(\Omega))}^2+\Vert e_p\Vert_{L^2(0,T;H^1(\Omega))}^2\right),
 \end{align*}
and for the first term on the left-hand side of the inequality above, we have
\begin{align*}
 \int_0^T \left(\phi \partial_t e_S, e_{\mu}\right) \mathrm{d} t
= &\ \int_0^T \left(\phi \partial_t ({\calS}' (\eta) e_{\mu}), e_{\mu}\right) \mathrm{d} t \\
= &\ \frac{1}{2} \int_0^T \partial_t \left(\phi e_{\mu}, {\calS}' (\eta)e_{\mu} \right) \mathrm{d} t
+\frac{1}{2}\int_0^T \left(\phi \partial_t {\calS}' (\eta), e_{\mu}^2\right) \mathrm{d} t \\
\geq &\ \frac{\phi_m S_\epsilon}{2} \Vert e_{\mu}(\bm{x},T)\Vert_{L^2(\Omega)}^2
-\frac{\Vert \phi\Vert_{L^\infty(\Omega)}}{2}\int_0^T \Vert \partial_t {\calS}' (\eta)\Vert_{W^{-1,\infty}(\Omega)} \Vert e_{\mu}^2\Vert_{W^{1,1}(\Omega)} \mathrm{d} t \\
\geq &\ c\Vert e_{\mu}(\bm{x},T)\Vert_{L^2(\Omega)}^2
-C\int_0^T \Vert \partial_t {\calS}' (\eta)\Vert_{W^{-1,\infty}(\Omega)}\left( \Vert \nabla e_{\mu}^2\Vert_{L^1(\Omega)}+\Vert e_{\mu}^2\Vert_{L^1(\Omega)}\right) \mathrm{d} t \\
\geq &\ c\Vert e_{\mu}(\bm{x},T)\Vert_{L^2(\Omega)}^2
-C\int_0^T \Vert \partial_t {\calS}' (\eta)\Vert^2_{W^{-1,\infty}(\Omega)}\Vert e_{\mu}\Vert_{L^2(\Omega)}^2 \mathrm{d} t-\varepsilon\int_0^T \Vert \nabla e_{\mu}\Vert_{L^2(\Omega)}^2 \mathrm{d} t,
\end{align*}
where $\eta$ is between $\mu_1$ and $\mu_2$.
Based on the analysis above, we can get
\begin{equation}\label{Femu}
\begin{aligned}
&\ \Vert e_{\mu}(\bm{x},T)\Vert_{L^2(\Omega)}^2
 +\Vert e_\mu\Vert_{L^2(0,T;H^1(\Omega))}^2+\Vert e_p\Vert_{L^2(0,T;H^1(\Omega))}^2 \\
  \leq &\ C\int_0^T \left(
  \Vert \partial_t {\calS}' (\eta)\Vert_{W^{-1,\infty}(\Omega)}^2+\Vert \mu_1\Vert_{W^{1,\infty}(\Omega)}^2
+\Vert q_t\Vert_{L^\infty(\Omega)}^2+\Vert \varphi_{2,4} \Vert_{L^\infty(\Gamma_2)}^2
  \right)
   \Vert e_\mu\Vert_{L^2(\Omega)}^2 \mathrm{d} t,
\end{aligned}
\end{equation}
which implies $ \Vert e_{\mu}(\bm{x},T)\Vert_{L^2(\Omega)}=0$ by the Gr\"onwall inequality in \cite{Gronwall1919}.  Hence, we obtain $ \Vert e_{\mu}\Vert_{L^\infty(0,T;L^2(\Omega))}=0$ since $T$ can be replaced by any time $t$ in the inequality \eqref{Femu}.
Then we can conclude $\Vert e_\mu\Vert_{L^2(0,T;H^1(\Omega))}^2+\Vert e_p\Vert_{L^2(0,T;H^1(\Omega))}^2=0$ since the inequality \eqref{Femu}, which completes the proof. 
\end{proof}

\section{Regularity of the weak solution}\label{section5}

In this section, since $\lambda_w, \lambda_n, p_c=-\mu_w$ depend on $S_w$, we first introduce the artificial pressure (see \cite{Arbogast1992,Chen2002})
\begin{equation}\label{reglobal}
\psi=p+\int_{S_\epsilon}^{S_w} \left(\frac{\lambda_w}{\lambda_t}\frac{\partial \mu_w}{\partial S_w}\right)(\xi)\mathrm{d}\xi,
\end{equation}
and the complementary pressure (see \cite{Arbogast1992,Chen2002})
\begin{equation}\label{rekirch}
\theta=\int_{S_\epsilon}^{S_w} \left(\frac{\lambda_w\lambda_n}{\lambda_t}\frac{\partial \mu_w}{\partial S_w}\right)(\xi)\mathrm{d}\xi.
\end{equation}
Then the equivalent formulation \eqref{F13} of the model can be deduced to
\begin{equation}\label{Fref13}
\begin{cases}
\phi\partial_t S_w-\nabla\cdot \left(\bm{\mathcal{K}}\nabla \theta+\lambda_w \bm{\mathcal{K}}\nabla \psi\right)=q_w, \\
-\nabla\cdot \left(\lambda_t \bm{\mathcal{K}}\nabla \psi\right)=q_t.
 \end{cases}
\end{equation}

For convenience, throughout this section, we consider the 
complete Neumann boundary conditions
$$
 \begin{cases}
 \lambda_n\bm{\mathcal{K}}\nabla p_n\cdot \mathbf{n} = \varphi_2, &\text{on}~ \partial\Omega \times (0,T], \\
 \lambda_w\bm{\mathcal{K}}\nabla p_w\cdot \mathbf{n} =\varphi_4, &\text{on}~\partial\Omega \times (0,T].
 \end{cases}		
$$
On the other hand, we can follow from \eqref{reglobal} and \eqref{rekirch} that
\begin{equation}\label{Feq5511}
\nabla \psi=\nabla p+\frac{\lambda_w}{\lambda_t}\nabla \mu_w,\quad \nabla \theta=\frac{\lambda_w\lambda_n}{\lambda_t}\nabla \mu_w,
\end{equation}
which yields the facts that
\begin{align*}
\bm{\mathcal{K}}\nabla\theta\cdot \mathbf{n} +\lambda_w\bm{\mathcal{K}}\nabla\psi\cdot \mathbf{n}=\lambda_w\bm{\mathcal{K}}\nabla p_w\cdot \mathbf{n}=\varphi_4, \\
\lambda_t\bm{\mathcal{K}}\nabla\psi\cdot \mathbf{n}=\lambda_n\bm{\mathcal{K}}\nabla p_n\cdot \mathbf{n}+\lambda_w\bm{\mathcal{K}}\nabla p_w\cdot \mathbf{n}=\varphi_{2}+\varphi_{4}=\varphi_{2,4}.
\end{align*}
Therefore, we close the equation \eqref{Fref13} by providing 
the complete Neumann boundary conditions
\begin{equation} \label{Fref13.1N}
 \begin{cases}
 \bm{\mathcal{K}}\nabla\theta\cdot \mathbf{n} +\lambda_w\bm{\mathcal{K}}\nabla\psi\cdot \mathbf{n}= \varphi_4, &\text{on}~ \partial\Omega \times (0,T],\\
 \lambda_t\bm{\mathcal{K}}\nabla\psi\cdot \mathbf{n}=\varphi_{2,4}, &\text{on}~ \partial\Omega \times (0,T].
 \end{cases}		
\end{equation}

In order to analyze the regularity of the weak solution for the new model, we present the following assumptions.
\begin{assum} \label{assum 6}
We assume that $\varphi_2,\ \varphi_4$ and $\lambda_w,\ \lambda_n$ are Lipschitz continuous with respect to $S_w$.
\end{assum}

\begin{assum} \label{assum 7}
We let the domain $\Omega$ be convex with $C^{1,\beta_1}$ boundary for some $\beta_1\in (0,1)$ and assume the initial condition $S_w^0\in H^1(\Omega)\cap C^{0, \beta_2}(\overline\Omega)$ for some $\beta_2\in (0,1)$. Furthermore, we assume
\begin{align*}
\gamma_w, \gamma_n,\gamma_{wn}\in H^1(\Omega),\quad
\bm{\mathcal{K}}\in \bm{\mathcal{W}}^{1,\infty}(\Omega) = [W^{1,\infty}(\Omega)]^{d\times d},
\end{align*}
 where $\bm{\mathcal{K}}\in \bm{\mathcal{W}}^{1,\infty}(\Omega)$ represents that each component of the tensor $\bm{\mathcal{K}}$ belongs to $W^{1,\infty}(\Omega)$.
\end{assum}

Then based on two intermediate variables $\theta$ and $\psi$, we have the following regularity results.
\begin{theorem}\label{thm 5.1}
Let the triple $(S_w,\mu_w,p)$ be the weak solution of the continuous model defined in Definition \ref{define1}. Then under Assumptions \ref{assum 1}-\ref{assum 4}, \ref{assum 6} and \ref{assum 7}, we have
\begin{equation} \label{regular 1}
\begin{cases}
 S_w\in L^\infty(0,T;H^1(\Omega))\cap H^1(0,T;L^2(\Omega)), \\
\mu_w,\ p \in L^2(0,T;H^{2}(\Omega))\cap L^\infty(0,T;H^1(\Omega)).
\end{cases}
\end{equation}
\end{theorem}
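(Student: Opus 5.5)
The plan is to work with the reformulation \eqref{Fref13}--\eqref{Fref13.1N} in the variables $(S_w,\theta,\psi)$ and then return to $(\mu_w,p)$ through \eqref{Feq5511}. Throughout, Theorem \ref{mainresult} gives $S_\epsilon\le S_w\le 1-S_\epsilon$. On this range $\lambda_w,\lambda_n,\lambda_t$ are bounded above and below, $\partial\mu_w/\partial S_w\in[L_{\min},L_{\max}]$ by \eqref{FderL}, and $\theta=\Theta(S_w)$ is a strictly increasing Lipschitz (bi-Lipschitz) function of $S_w$. To make the formal computations rigorous, I would carry them out at the level of the semi-discrete scheme \eqref{F21}, using the difference quotient $(S_w^{k+1}-S_w^k)/\tau$. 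I would then pass $N\to\infty$ using the convergences of Section 4 together with weak lower semicontinuity.

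\textbf{Step 1 (elliptic regularity for $\psi$).} The second equation of \eqref{Fref13} with the Neumann data \eqref{Fref13.1N} is a uniformly elliptic problem with bounded measurable coefficient $\lambda_t\bm{\mathcal K}$. This gives $\psi\in L^2(0,T;H^1)$ with $\|\nabla\psi\|_{L^2}\le C(\|q_t\|+\|\varphi_{2,4}\|)$ uniformly in $t$, so $\nabla\psi\in L^\infty(0,T;L^2)$.

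To go further I need $\lambda_t=\lambda_t(S_w)$ to be H\"older continuous in $x$. I would obtain this from De Giorgi--Nash--Moser applied to the degenerate-free parabolic equation for $\theta$, namely the first equation of \eqref{Fref13} written as $\phi\,\partial_t\Theta^{-1}(\theta)-\nabla\cdot(\bm{\mathcal K}\nabla\theta)=\nabla\cdot(\lambda_w\bm{\mathcal K}\nabla\psi)+q_w$. Since $S_w^0\in C^{0,\beta_2}$, this yields $S_w\in C^{0,\beta}$. With H\"older coefficients, a $C^{1,\beta_1}$ domain and bounded Neumann data, Schauder/$L^p$ theory then gives $\nabla\psi\in L^\infty$, and in fact $\nabla\psi\in L^r$ for large $r$.

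\textbf{Step 2 (energy estimate for $\theta$).} I would test the first equation of \eqref{Fref13} with $\partial_t\theta=\Theta'(S_w)\partial_t S_w$ (discretely, with $\theta^{k+1}-\theta^k$). This gives
\[
c\|\partial_t S_w\|_{L^2}^2+\tfrac12\tfrac{d}{dt}(\bm{\mathcal K}\nabla\theta,\nabla\theta)\le -(\lambda_w\bm{\mathcal K}\nabla\psi,\nabla\partial_t\theta)+(q_w,\partial_t\theta)+\langle\varphi_4,\partial_t\theta\rangle_{\partial\Omega}.
\]
The $\psi$-term is the delicate one. I would integrate by parts in space, which moves the derivative onto $\nabla\cdot(\lambda_w\bm{\mathcal K}\nabla\psi)$. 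I would control this using $\nabla\cdot(\lambda_t\bm{\mathcal K}\nabla\psi)=-q_t$ to write $\nabla\cdot(\lambda_w\bm{\mathcal K}\nabla\psi)=\frac{\lambda_w}{\lambda_t}(-q_t)+\lambda_t\bm{\mathcal K}\nabla\psi\cdot\nabla(\lambda_w/\lambda_t)$. The last term is bounded by $C|\nabla\psi||\nabla S_w|\le C\|\nabla\psi\|_{L^\infty}|\nabla\theta|$. The boundary term reduces, via the Neumann condition \eqref{Fref13.1N}, to $\varphi_4-\lambda_w\varphi_{2,4}/\lambda_t$, a Lipschitz function of $S_w$ (Assumption \ref{assum 6}). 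I would handle it as a time derivative of a primitive in $S_w$ plus a trace term. Young's inequality and Gr\"onwall then give $S_w\in H^1(0,T;L^2)$ and $\theta\in L^\infty(0,T;H^1)$, hence $S_w\in L^\infty(0,T;H^1)$, with $S_w^0\in H^1$ supplying the initial bound. Consequently $\mu_w=\mu_w(x,S_w)\in L^\infty(0,T;H^1)$, using $\gamma_\alpha\in H^1$ and the bounds on $S_w$, and $\nabla p=\nabla\psi-\frac{\lambda_w}{\lambda_t}\nabla\mu_w\in L^\infty(0,T;L^2)$.

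\textbf{Step 3 ($H^2$ regularity).} With $\Omega$ convex and $C^{1,\beta_1}$, $\bm{\mathcal K}\in\bm{\mathcal W}^{1,\infty}$, and the $\theta$-equation rewritten as $-\nabla\cdot(\bm{\mathcal K}\nabla\theta)=q_w-\phi\partial_tS_w+\nabla\cdot(\lambda_w\bm{\mathcal K}\nabla\psi)$, the right-hand side lies in $L^2(0,T;L^2)$ by Step 2 and the computation above. Hence $\theta\in L^2(0,T;H^2)$. Expanding $\nabla\cdot(\lambda_t\bm{\mathcal K}\nabla\psi)$ gives $\bm{\mathcal K}:D^2\psi=(-q_t-\nabla(\lambda_t)\cdot\bm{\mathcal K}\nabla\psi-\lambda_t(\nabla\cdot\bm{\mathcal K})\cdot\nabla\psi)/\lambda_t\in L^2$, since $\nabla S_w\in L^2$ and $\nabla\psi\in L^\infty$. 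This yields $\psi\in L^2(0,T;H^2)$. Finally $\nabla\mu_w=\frac{\lambda_t}{\lambda_w\lambda_n}\nabla\theta$, and $p=\psi-\int\cdots$ by \eqref{reglobal}. Differentiating once more, the chain rule with Lipschitz coefficients and $\nabla\theta\in L^4$ (from $H^2\hookrightarrow W^{1,4}$ for $d\le3$) places $\mu_w$ and $p$ in $L^2(0,T;H^2)$. The $x$-dependence of $\gamma_\alpha$ contributes terms that I expect need $\gamma_\alpha$ to be slightly better than $H^1$; I would absorb them using the $L^\infty$ bounds available.

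\textbf{Main obstacle.} I expect the hardest step to be the coupling term $(\lambda_w\bm{\mathcal K}\nabla\psi,\nabla\partial_t\theta)$ in Step 2, which requires an $L^\infty$ bound on $\nabla\psi$ and therefore H\"older continuity of $S_w$ first. I would get that from parabolic De Giorgi--Nash theory in Step 1, as in \cite{Chen2002}.
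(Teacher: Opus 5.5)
Your architecture matches the paper's. You get H\"older continuity of $S_w$ as in \cite{Chen2002}, then $\nabla\psi\in L^\infty$, then the $\partial_t\theta$-energy estimate (which is what the paper's appeal to the Galerkin method of Chapter 7 of \cite{Evans2022} amounts to), then $H^2$ regularity of $\theta$ and $\psi$, then the chain rule. Your treatment of $\nabla\cdot(\lambda_w\bm{\mathcal K}\nabla\psi)$ through the pressure equation is fine; the paper instead first proves $\psi\in L^2(0,T;H^2(\Omega))$, and both give this term in $L^2(0,T;L^2(\Omega))$. Your handling of the boundary term through a primitive in $S_w$ is also fine.

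\textbf{The gap is in the last step.} Since $\nabla\mu_w=h(S_w)\nabla\theta$ with $h=\lambda_t/(\lambda_w\lambda_n)$, the Hessian $D^2\mu_w$ contains a term quadratic in $\nabla\theta$, and so does $D^2p$. You therefore need $|\nabla\theta|^2\in L^2(0,T;L^2(\Omega))$, i.e.\ $\nabla\theta\in L^4(0,T;L^4(\Omega))$.
\begin{itemize}
\item The embedding $H^2\hookrightarrow W^{1,4}$ only gives $\|\nabla\theta\|_{L^4}\le C\|\theta\|_{H^2}$.
\item Hence you get $\nabla\theta\in L^2(0,T;L^4(\Omega))$ and $|\nabla\theta|^2\in L^1(0,T;L^2(\Omega))$, so only $\mu_w,p\in L^1(0,T;H^2(\Omega))$.
\item For $d=3$, interpolating with $\theta\in L^\infty(0,T;H^1(\Omega))$ does not help. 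It gives $\|\nabla\theta\|_{L^4}^4\le C\|\theta\|_{H^1}\|\theta\|_{H^2}^3$, which would require $\|\theta\|_{H^2}\in L^3(0,T)$.
\end{itemize}
The missing ingredient is the uniform bound $\|\theta\|_{L^\infty}\le C$, which follows from $S_\epsilon\le S_w\le 1-S_\epsilon$. Combine it with the Gagliardo--Nirenberg inequality $\|\nabla\theta\|_{L^4}^2\le C\|\theta\|_{L^\infty}\|\theta\|_{H^2}$, as the paper does. This gives $\int_0^T\|\nabla\theta\|_{L^4}^4\,\mathrm{d}t\le C\int_0^T\|\theta\|_{H^2}^2\,\mathrm{d}t<\infty$.

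\textbf{Two further points.}
\begin{itemize}
\item You plan to justify Step 2 on the scheme \eqref{F21} and pass $N\to\infty$. That would only give regularity of the solution constructed in Section 4, not of an arbitrary weak solution as the theorem states, since uniqueness is not assumed here. It would also need $\tau$-uniform H\"older and $W^{1,\infty}$ bounds for the discrete iterates, which your Step 1 (proved for the continuous problem) does not supply. Regularize the given solution in time instead, e.g.\ with Steklov averages.
\item The paper does not treat the explicit $x$-dependence of $\gamma_\alpha$ separately; it works directly with the identities \eqref{Feq5511}. If you follow it there, drop the remark about absorbing these terms with $L^\infty$ bounds, which as written is not an argument.
\end{itemize}
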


\begin{proof}
We will first prove $\psi\in L^\infty(0,T;W^{1,\infty}(\Omega))$. According to Theorem 2.5 in \cite{Chen2002}, we can similarly follow from
$$0 \leq \frac{\lambda_{\min}^2 L_{\min}}{2\lambda_{\max}}\leq \frac{\lambda_w\lambda_n}{\lambda_t}\frac{\partial \mu_w}{\partial S_w}\leq \frac{\lambda_{\max}^2 L_{\max}}{2\lambda_{\min}},\quad S_w^0\in C^{0, \beta_2}(\overline\Omega), \quad S_\epsilon\leq S_w\leq 1-S_\epsilon,$$
the Lipschitz continuous of $\varphi_\alpha, \ \alpha=2,4$ with respect to $S_w$, and $q_w\in L^\infty(0,T;L^\infty(\Omega))$ that
$$S_w\in L^\infty(0,T;C^{0, \beta_0}(\overline\Omega))$$
for some $\beta_0\in (0,\beta_1]\cap (0,\beta_2]$. Then due to the Lipschitz continuity of $\lambda_t=\lambda_w+\lambda_n$ with respect to $S_w$, we obtain $\lambda_t\in L^\infty(0,T;C^{0, \beta_0}(\overline\Omega))$.
By using the regularity theory for elliptic systems such as Theorem 5.21 in \cite{Giaquinta2013} and the facts
$$\lambda_t\in L^\infty(0,T;C^{0, \beta_0}(\overline\Omega)),\quad q_t\in L^\infty(0,T;L^\infty(\Omega)),\quad \varphi_{2,4}\in L^\infty(0,T;L^\infty(\Omega)), $$
we can derive from the second equation of \eqref{Fref13} and that
\begin{equation}\label{Feq51}
\psi\in L^\infty(0,T;W^{1,\infty}(\Omega)).
\end{equation}

Next, we will show
$$\partial_t S_w\in L^2(0,T;L^2(\Omega)),\quad S_w,\ \mu_w,\ p \in L^\infty(0,T;H^1(\Omega)).$$
 By the facts $\gamma_w, \gamma_n,\gamma_{wn}\in H^1(\Omega)$ and $\mu_w\in L^2(0,T;H^1(\Omega))$, we obtain from $S_w=\calS(\mu_w)$ that
\begin{equation}\label{Feq52}
S_w\in L^2(0,T;H^1(\Omega)).
\end{equation}
Then by the Lipschitz continuity of $\lambda_w, \lambda_t$ with respect to $S_w$ and the Rademacher theorem such as Theorem {R} in \cite{Nekvinda1988}, we have the boundedness of
$$\frac{\partial \lambda_w}{\partial S_w}, \ \frac{\partial \lambda_n}{\partial S_w}.$$
Hence, the Lipschitz continuous of $\varphi_\alpha, \ \alpha=2,4$ with respect to $S_w$ and \eqref{Feq51}-\eqref{Feq52} imply
\begin{equation}\label{Feq53}
\varphi_{2},\ \varphi_{4}\in L^2(0,T;H^1(\Omega)),\quad \nabla \lambda_w\cdot \nabla \psi,\ \nabla \lambda_t\cdot \nabla \psi\in L^2(0,T;L^2(\Omega)).
\end{equation}
Then using \eqref{Feq53}, $\lambda_t\geq 2\lambda_{\min}>0$, and $q_t\in L^\infty(0,T;L^\infty(\Omega))$, and combining the regularity theory for elliptic systems (see Theorem 9.15 in \cite{Gilbarg1977}), we can see from the second equation of \eqref{Fref13} that
\begin{equation}\label{Feq54}
\psi \in L^2(0,T;H^2(\Omega)).
\end{equation}
Using \eqref{Feq51}-\eqref{Feq54}, $q_w\in L^\infty(0,T;L^\infty(\Omega))$ and $\bm{\mathcal{K}}\in \bm{\mathcal{W}}^{1,\infty}(\Omega)$, we obtain
\begin{equation}\label{Feq541}
\nabla\cdot \left(\lambda_w \bm{\mathcal{K}}\nabla \psi\right)+q_w\in L^2(0,T;L^2(\Omega)).
\end{equation}
Hence, by $\varphi_4\in L^2(0,T;H^1(\Omega))$ and the Galerkin approximation method (see Chapter 7 in \cite{Evans2022}), the first equation of \eqref{Fref13} yields
\begin{equation}\label{Feq55}
\partial_t S_w\in L^2(0,T;L^2(\Omega)),\quad \theta \in L^\infty(0,T;H^1(\Omega)).
\end{equation}
Then due to the facts that $\gamma_w, \gamma_n,\gamma_{wn}\in H^1(\Omega)$, $\psi\in L^\infty(0,T;W^{1,\infty}(\Omega))$ and $\theta \in L^\infty(0,T;H^1(\Omega))$, \eqref{Feq55} and \eqref{Feq5511} imply
\begin{equation}\label{Feq551}
S_w, \mu_w,p \in L^\infty(0,T;H^1(\Omega)).
\end{equation}


Finally, we demonstrate $\mu_w,\ p\in L^2(0,T;H^2(\Omega))$. Using \eqref{Feq541}-\eqref{Feq55} and $\varphi_2,\ \varphi_4\in L^2(0,T;H^1(\Omega))$, and combining the regularity theory for elliptic systems (see Theorem 9.15 in \cite{Gilbarg1977}), we deduce from the first equation of \eqref{Fref13} that
\begin{equation}\label{Feq56}
\theta \in L^2(0,T;H^2(\Omega)).
\end{equation}
By a simple calculation, we can find that
\begin{align*}
\Delta \theta = &\ \frac{\lambda_w\lambda_n}{\lambda_t}\Delta \mu_w+\nabla\left(\frac{\lambda_w\lambda_n}{\lambda_t}\right)\cdot\nabla \mu_w, \\
= &\ \frac{\lambda_w\lambda_n}{\lambda_t}\Delta \mu_w+\frac{\partial \lambda_w\lambda_n/\lambda_t}{\partial S_w}\frac{\partial \calS}{\partial \mu_w} \nabla \mu_w\cdot\nabla \mu_w.
\end{align*}
Moreover, using the fact (see Problem 10 in Chapter 5 of \cite{Evans2022})
$$
\Vert \nabla \theta\Vert_{L^4(\Omega)}^2\leq \Vert \theta\Vert_{L^\infty(\Omega)}\Vert \Delta \theta\Vert_{L^2(\Omega)},
$$
we have $\nabla \theta\in L^4(0,T;L^{4}(\Omega))$ due to
$$\Vert \theta\Vert_{L^\infty(\Omega)}\leq \frac{\lambda_{\max}^2 L_{\max}}{2\lambda_{\min}},$$
and $\theta \in L^2(0,T;H^2(\Omega))$. Then by \eqref{Feq5511}, we can see that $\nabla \mu_w\in L^4(0,T;L^{4}(\Omega))$, which means
$$\nabla \mu_w\cdot \nabla \mu_w \in  L^2(0,T;L^{2}(\Omega)).$$
Hence, now we can obtain from the regularity theory for elliptic systems (see Theorem 9.15 in \cite{Gilbarg1977}) that $\mu_w\in L^2(0,T;H^2(\Omega))$.
Similarly, we have $p \in L^2(0,T;H^2(\Omega))$,
which concludes the proof.
\end{proof}

\section{Conclusion}\label{section6}

In this paper, using the energy stability estimate and the zeros of a vector field theorem, we prove the existence of a weak solution of the thermodynamically consistent model for incompressible and immiscible two-phase flow in porous media. The uniqueness of the weak solution is obtained by assuming the Lipschitz continuity of the chemical potential rather than the artificial pressure defined in \cite{Chen2001}.
Furthermore, based on Theorem 2.5 in \cite{Chen2002}, we obtain the regularity of the weak solution for the model with complete Neumann boundary conditions by the regularity theory of elliptic PDEs. 
This paper is the first work on the well-posedness and regularity for the thermodynamically consistent model of two-phase flow in porous media \cite{Gao2020,Kou2022}.

\section*{Appendix A.\ \emph{Proof of the inequality \eqref{appendixA}}}

 To show the inequality \eqref{appendixA}, we first introduce a truncation function $H$ satisfying $H(x)=0$ if $x>0$ and $H(x)=1$ if $x\leq 0$, which implies
$$(\mu_w-M_{\min})^-=(\mu_w-M_{\min})H(\mu_w-M_{\min}),\quad \frac{\mathrm{d} H(x)}{\mathrm{d} x}=-\delta(x),$$
where $\delta(x)$ is Dirac delta function (for further details about this function, refer to \cite{Hassani2009}).
Then from the chain rule of differentiation, we can obtain the equality
\begin{align*}
&\ \int_\Omega\partial_t\Big(\big(F(S_w)-F(S_\epsilon)-M_{\min}(S_w-S_\epsilon)\big)H(\mu_w-M_{\min})\Big)\mathrm{d}\bm{x} \\
= &\ \int_\Omega\partial_t \calS(\mu_w) (\mu_w-M_{\min})H(\mu_w-M_{\min})\mathrm{d}\bm{x} \\
&\ -\int_\Omega\big(F(S_w)-F(S_\epsilon)-M_{\min}(S_w-S_\epsilon)\big)\delta(S_w-S_\epsilon)\partial_t \calS(\mu_w)\mathrm{d}\bm{x} \\
= &\ \int_\Omega\partial_t \calS(\mu_w) (\mu_w-M_{\min})H(\mu_w-M_{\min})\mathrm{d}\bm{x}.
\end{align*}
Hence, we can see that
\begin{align*}
&\ \int_0^T \left(\phi \partial_t \calS(\mu_w), (\mu_w-M_{\min})^-\right) \mathrm{d} t \\
= &\ \int_0^T \left(\phi \partial_t S_w, (\mu_w-M_{\min})^-\right) \mathrm{d} t \\
= &\ \int_\Omega \phi \Big(\big(F(S_w(\bm{x},T))-F(S_\epsilon)-M_{\min}(S_w(\bm{x},T)-S_\epsilon)\big)H(\mu_w(\bm{x},T)-M_{\min})\Big) \mathrm{d} \bm{x} , 
\end{align*}
which yields the inequality \eqref{appendixA} due to the fact $\big(F(S_w)-F(S_\epsilon)-M_{\min}(S_w-S_\epsilon)\big)\geq 0$ when $\mu_w\leq M_{\min}$.

\section*{Appendix B.\ \emph{Proof of the estimate \eqref{Eq 3.4.1.2}}}

By a simple calculation, we can see that
\begin{align}
&\ F(S_w^{k+1})-F(S_w^k)+\frac{c_{\min}}{2}\left(S_w^{k+1}-S_w^k\right)^2
-\left(S_w^{k+1}-S_w^k\right)\mu_w^{k+1}    \label{appdFmu} \\
=&\ \sum_{\alpha=w,n} \gamma_\alpha S_\alpha^{k} \left(\ln \left(\frac{S_\alpha^{k+1}}{S_\alpha^k}\right)-
\frac{S_\alpha^{k+1}}{S_\alpha^k}+1\right)
+\left(\gamma_{wn}+\frac{c_{\min}}{2}\right)\left(S_w^{k+1}-S_w^{k}\right)^2 \label{appdFmu2}.
\end{align}
Then the derivative for the function \eqref{appdFmu2} with respect to $S_w^{k+1}$ can be calculated and given as
\begin{align*}
\left(\frac{\gamma_w}{S_w^{k+1}}+\frac{\gamma_n}{1-S_w^{k+1}}-2\gamma_{wn}-c_{\min}\right) \left(S_w^{k}-S_w^{k+1}\right),
\end{align*}
which yields that the function \eqref{appdFmu} reaches its maximum value of $0$ at $S_w^{k+1}=S_w^k$ due to Assumption \ref{assum 2}.
Hence, we obtain the estimate \eqref{Eq 3.4.1.2}.

\section*{Appendix C.\ \emph{Proof of the fact \eqref{appendixC}}}

In the appendix, we first introduce the following result that for $v\in C_0^\infty(0,T;H^1(\Omega))$, we have
\begin{equation}\label{apppendix}
\sum_{k=0}^{N-2} \int_{I_k} \left\Vert \partial_t v-\frac{v^{k+1}-v^k}{\tau}\right\Vert_{H^1(\Omega)}^2\mathrm{d}t\rightarrow 0, \quad \sum_{k=0}^{N-1} \int_{I_k} \Vert v-v^k \Vert_{H^1(\Omega)}^2\mathrm{d}t\rightarrow 0,
\end{equation}
as $N\rightarrow \infty$, where $v^k=\frac{1}{\tau}\int_{I_k}v(t)\mathrm{d}t$. The proof of \eqref{apppendix} is given as follows.
Similar to that $\varphi_{\alpha,N}$ is the piecewise constant interpolant of $\varphi_{\alpha}$ in the $L^2$ space with respect to time, we also have
\begin{align*}
 \sum_{k=0}^{N-1} \int_{I_k} \left\Vert v-v^k\right\Vert_{H^1(\Omega)}^2\mathrm{d}t \rightarrow 0,
\end{align*}
as $N\rightarrow \infty$. Similarly, by using the fact $v^k=\frac{1}{\tau}\int_{I_k}v(t)\mathrm{d}t$ and a simple calculation, we have
\begin{align*}
 \sum_{k=0}^{N-2} \int_{I_k} \left\Vert \partial_t v-\frac{v^{k+1}-v^k}{\tau}\right\Vert_{H^1(\Omega)}^2\mathrm{d}t
= &\ \sum_{k=0}^{N-2} \int_{I_k} \left\Vert \partial_t v-\frac{\int_{I_k}\partial_t v(t+\nu\tau)\mathrm{d}t}{\tau}\right\Vert_{H^1(\Omega)}^2\mathrm{d}t \rightarrow 0,
\end{align*}
as $N\rightarrow \infty$, where $\nu\in [0,1]$. On the other hand, we will also use the fact
\begin{align*}
\left|(S_{w}^0,v^{0})\right|\leq \left\Vert v^{0}\right\Vert_{L^2(\Omega)}= \left\Vert \int_{I_0}\frac{t\partial_t v}{\tau}\mathrm{d}t\right\Vert_{L^2(\Omega)}\leq \tau\left\Vert \partial_{t} v\right\Vert_{L^\infty(0,T;L^2(\Omega))}^2 \rightarrow 0,
\end{align*}
and similarly
\begin{align*}
\left|(S_{w}^N,v^{N-1})\right| \rightarrow 0,
\end{align*}
as $N\rightarrow \infty$, according to $v\in C_0^\infty(0,T;H^1(\Omega))$.

 We first show that $S_w'=\partial_t S_w$. Let $v\in C_0^\infty(0,T;H^1(\Omega))$, then we obtain
\begin{align*}
    \sum_{k=0}^{N-1} \tau \left(\frac{S_{w}^{k+1}-S_{w}^k}{\tau},v^{k}\right) =  -\sum_{k=0}^{N-2} \tau \left(\frac{v^{k+1}-v^k}{\tau},S_{w}^{k+1}\right)+\left(S_{w}^N,v^{N-1}\right)-\left(S_{w}^0,v^{0}\right),
\end{align*}
where $v^k=\frac{1}{\tau}\int_{I_k}v(t)\mathrm{d}t$. Letting $N\rightarrow \infty$ and using the analysis above, we have
\begin{align*}
    \lim\limits_{N\to +\infty}\sum_{k=1}^{N-1} \tau\left(\frac{S_{w}^{k+1}-S_{w}^k}{\tau},v^{k}\right) =  -\int_0^T \left(\partial_t v,S_w\right) \mathrm{d} t,
\end{align*}
since $S_{w,N}\rightarrow S_w$ strongly in $L^2(0,T;L^2(\Omega))$. Then due to
$
DS_{w,N_j}\rightharpoonup S_w',
$
weakly in $L^2(0,T;H^{-1}(\Omega))$, we can derive
\begin{align*}
    \int_0^T \left(S_w',v\right) \mathrm{d} t =  -\int_0^T \left(\partial_t v,S_w\right) \mathrm{d} t,
\end{align*}
which yields from the definition of weak derivative that $S_w'=\partial_t S_w$. Similarly, for $\hat{\mu}_w'$, we also have $\hat{\mu}_w'=\partial_t \hat{\mu}_w$, which concludes the fact \eqref{appendixC}.

\section*{Appendix D.\ \emph{Proof of the estimate \eqref{appendixD}}}
First, we can deduce from Assumption \ref{assum 5} and Lemma \ref{lemma 5.2} that $\mu_1\in L^2(0,T;W^{1,\infty}(\Omega))$.
On the other hand, by the trace theorem (see Lemma 16.1 in \cite{Tartar2007}), \eqref{equF20} implies that
 \begin{align*}
  &\ \int_0^T \left(\lambda_t\bm{\mathcal{K}}\nabla p, \nabla v \right)\mathrm{d} t \\
 =&\ -\int_0^T \left(\lambda_w\bm{\mathcal{K}}\nabla \mu_w, \nabla v \right)\mathrm{d} t
 +\int_0^T (q_t,v)\mathrm{d}t
 +\int_0^T \left<\varphi_{2,4},v\right>_{\Gamma_2} \mathrm{d}t \\
 \leq &\ \lambda_{\max} K_{\max} \int_0^T \Vert \mu_w\Vert_{W^{1,\infty}(\Omega)}\Vert v\Vert_{W^{1,1}(\Omega)}\mathrm{d} t+C\int_0^T \left( \Vert q_t\Vert_{W^{-1,\infty}(\Omega)}+\Vert \varphi_{2,4} \Vert_{L^\infty(\Gamma_2)}\right)\Vert v\Vert_{W^{1,1}(\Omega)}\mathrm{d} t, \\
 \leq &\ C\int_0^T \left(\Vert \mu_w\Vert_{W^{1,\infty}(\Omega)}+\Vert q_t\Vert_{L^\infty(\Omega)}+\Vert \varphi_{2,4} \Vert_{L^\infty(\Gamma_2)}\right)
\Vert\nabla v\Vert_{L^{1}(\Omega)}\mathrm{d} t,
 \end{align*}
 for each $v\in L^2(0,T;V)$. Using the fact $0<\lambda_{\min}\leq \lambda_w$, we set $\nabla v=f\nabla z$ where $f\in L^2(0,T;L^2(\Omega))$ and $z\in L^2(0,T;H^1(\Omega))$ to obatin
$$
\int_0^T \left(\nabla p_1,f\nabla z\right)\mathrm{d}t
\leq C\int_0^T \left(\Vert \mu_1\Vert_{W^{1,\infty}(\Omega)}
+\Vert q_t\Vert_{L^\infty(\Omega)}+\Vert \varphi_{2,4} \Vert_{L^\infty(\Gamma_2)}\right)
\Vert f\Vert_{L^2(\Omega)}\Vert\nabla z\Vert_{L^2(\Omega)}\mathrm{d}t.
$$

Similarly, according to the boundary $\varphi_1\in H^1(0,T;W^{-1,\infty}(\Omega))$, we can obtain from \eqref{equF20} that $\partial_t S_w \in L^2(0,T;W^{-1,\infty}(\Omega))$. Then by the inequality \eqref{FderL}, we can get $ \partial_t \mu_1,  \partial_t \mu_2\in L^2(0,T;W^{-1,\infty}(\Omega))$.
Next, by applying the chain rule of differentiation, we deduce
$$
\partial_t {\calS}' (\eta)={\calS}'' (\eta)\partial_t \eta.
$$
Since $\eta$ is between $\mu_1$ and $\mu_2$, combining the facts $M_{\min}\leq \mu_w\leq M_{\max}$ and $ \partial_t \mu_1,  \partial_t \mu_2\in L^2(0,T;W^{-1,\infty}(\Omega))$, we can see ${\calS}'' (\eta)\partial_t \eta\in L^2(0,T;W^{-1,\infty}(\Omega))$, which implies the estimate \eqref{appendixD}.

\providecommand{\href}[2]{#2}


\begin{thebibliography}{10}

\bibitem{Abels2012}
H. Abels, H. Garcke and G. Grun, Thermodynamically consistent, frame indifferent diffuse interface models for incompressible two-phase flows with different densities, Math. Mod. Meth. Appl. Sci., 22 (2012), 1150013.

\bibitem{Adams2003}
R. A. Adams, and J. J. Fournier, Sobolev spaces, Elsevier, 140, 2003.

\bibitem{Alt1985}
H.W. Alt and E. DiBenedetto, Nonsteady flow of water and oil through inhomogeneous porous media,
Ann. Scuola Norm. Sup. Pisa Cl. Sci., 12 (1985), pp. 335--392.

\bibitem{Amadori2015}
D. Amadori, P. Baiti, A. Corli and E. DalSanto, Global weak solutions for a model of two-phase flow with a single interface, J. Evol. Equations, 15 (2015), pp. 699--726.

\bibitem{Amaziane1996}
B. Amaziane, A. Bourgeat and H. Elamri, Existence of solutions to various rock types model of two-phase flow in porous media, Applicable Analysis, 60 (1996), pp. 121--132.

\bibitem{Arbogast1992}
T. Arbogast, The existence of weak solutions to single porosity and simple dual-porosity models of two-phase incompressible flow, Nonlinear Anal., 19 (1992), pp. 1009--1031.

\bibitem{Arbogast1996}
T. Arbogast and M.F. Wheeler, A nonlinear mixed finite element method for a degenerate parabolic equation arising in flow in porous media, SIAM J. Numer. Anal., 33 (1996), pp. 1669--1687.

\bibitem{Bertsch2003}
M. Bertsch, R.D. Passo and C.J. Van Duijn, Analysis of oil trapping in porous media flow, SIAM J. Math. Anal., 35 (2003), pp. 245--267.

\bibitem{Bourgeat1995}
A. Bourgeat and A. Hidani, A result of existence for a model of two-phase flow in a porous media made of different rock types, Applicable Analysis, 56 (1995), pp. 381--399.

\bibitem{Buzzi2009}
F. Buzzi, M. Lenzinger and B. Schweizer, Interface conditions for degenerate two-phase flow equations in one space dimension, Analysis (Munich), 29 (2009), pp. 299--316.





\bibitem{Cances2009}
C. Canc\`es, Finite volume scheme for two-phase flow in heterogeneous porous media involving capillary pressure discontinuities, ESAIM: Math. Model. Numer. Anal., 43 (2009), pp. 973--1001.

\bibitem{CancesC2009}
C. Canc\`es, T. Gallou\"{e}t and A. Porretta, Two-phase flows involving capillary barriers in heterogeneous porous media, Interfaces Free Bound., 11 (2009), pp. 239--258.

\bibitem{Cances2012}
C. Canc\`{e}s and M. Pierre, An existence result for multidimensional immiscible two-phase flows with discontinuous capillary pressure field, SIAM J. Math. Anal., 44 (2012), pp. 966--992.

\bibitem{Cances2018}
C. Canc\`es, Energy stable numerical methods for porous media flow type problems, Oil \& Gas Science and Technology - Rev. IFP Energies nouvelles, 73 (2018), 78.

\bibitem{Chavent1986}
G. Chavent and J. Jaffr\'e, Mathematical Models and Finite Elements for Reservoir Simulation: Single Phase, Multiphase and Multicomponent Flows through Porous Media, Elsevier, 1986.

\bibitem{Chen2015}
C.Y. Chen and P.Y. Yan, A diffuse interface approach to injection-driven flow of different miscibility in heterogeneous porous media, Phys. Fluids, 27 (2015), 083101.

\bibitem{Chen2019}
H. Chen, J. Kou, S. Sun and T. Zhang, Fully mass-conservative IMPES schemes for incompressible two-phase flow in porous media, Comput. Methods Appl. Mech. Eng., 350 (2019), pp. 641--663.

\bibitem{Chen2001}
Z. Chen, Degenerate two-phase incompressible flow. I: Existence, uniqueness and regularity of a weak solution, J. Differential Equations, 171 (2001), pp. 203--232.

\bibitem{Chen2002}
Z. Chen, Degenerate two-phase incompressible flow. II: Regularity, stability and stabilization, J. Differential Equations, 186 (2002), pp. 345--376.

\bibitem{Chen2006}
Z. Chen, G. Huan and Y. Ma, Computational Methods for Multiphase Flows in Porous Media, SIAM, Philadelphia, PA, USA, 2006.

\bibitem{Cueto2014}
L. Cueto-Felgueroso and R. Juanes, A phase-field model of two-phase Hele-Shaw flow, J. Fluid Mech., 758 (2014), pp. 522--552.

\bibitem{Daim2007}
F.Z. Daim, R. Eymard and D. Hilhorst, Existence of a solution for two-phase flow in porous media: the case that the porosity depends on the pressure, J. Math. Anal. Appl., 326 (2007), pp. 332--351.

\bibitem{Evans2022}
L.C. Evans, Partial Differential Equations, American Mathematical Soc., 2022.

\bibitem{Feng2012}
X. Feng and S. Wise, Analysis of a Darcy-Cahn-Hilliard diffuse interface model for the Hele-Shaw flow and its fully discrete finite element approximation, SIAM J. Numer. Anal., 50 (2012), pp. 1320--1343.

\bibitem{Gao2020}
H. Gao, J. Kou, S. Sun and X. Wang, Thermodynamically consistent modeling of two-phase incompressible flows in heterogeneous and fractured media, Oil \& Gas Science and Technology--Rev. IFP Energies nouvelles, 75 (2020), 32.

\bibitem{Giaquinta2013}
M. Giaquinta and L. Martinazzi, An introduction to the regularity theory for elliptic systems, harmonic maps and minimal graphs, Springer Science \& Business Media, 2013.

\bibitem{Gilbarg1977}
D. Gilbarg and N.S. Trudinger, Elliptic Partial Differential Equations of Second Order, Berlin: springer, 1977.


\bibitem{Gronwall1919}
T. H. Gronwall, Note on the derivatives with respect to a parameter of the solutions of a system of differential equations, Ann. of Math, 20 (1919), pp. 292--296.

\bibitem{Guo2015}
Z. Guo and P. Lin, A thermodynamically consistent phase-field model for two-phase flows with thermocapillary effects, J. Fluid Mech., 766 (2015), pp. 226--271.

\bibitem{Hassani2009}
S. Hassani, Mathematical Methods: For Students of Physics and Related Fields, 2nd ed., Springer, New York, 2009,  pp. 139--170. 

\bibitem{Hoteit2008}
H. Hoteit and A. Firoozabadi, Numerical modeling of two-phase flow in heterogeneous permeable media with different capillarity pressures, Adv. Water Resour., 31 (2008), pp. 56--73.

\bibitem{HoteitH2008}
H. Hoteit and A. Firoozabadi, An efficient numerical model for incompressible two-phase flow in fractured media, Adv. Water Resour., 31 (2008), pp. 891--905.

\bibitem{Hytonen2016}
T. Hyt\"onen, J. van Neerven, M. Veraar and L. Weis, Analysis in Banach Spaces: Volume I: Martingales and Littlewood-Paley Theory, 2016, pp. 1--66. 

\bibitem{Kou2014}
J. Kou and S. Sun, Upwind discontinuous Galerkin methods with conservation of mass of both phases for incompressible two-phase flow in porous media, Numer. Methods Partial Differ. Equ., 30 (2014), pp. 1674--1699.

\bibitem{Kou2022}
J. Kou, X. Wang, S. Du and S. Sun, An energy stable linear numerical method for thermodynamically consistent modeling of two-phase incompressible flow in porous media, J. Comput. Phys., 451 (2022), pp. 110854.

\bibitem{Kou2023}
J. Kou, H. Chen, S. Du and S. Sun, An efficient and physically consistent numerical method for the Maxwell-Stefan-Darcy model of two-phase flow in porous media, Int. J. Numer. Methods Eng., 124 (2023), pp. 546--569.

\bibitem{KouJ2023}
J. Kou, X. Wang, H. Chen and S. Sun, An energy stable, conservative and bounds-preserving numerical method for thermodynamically consistent modeling of incompressible two-phase flow in porous
media with rock compressibility, Int. J. Numer. Methods Eng., 124 (2023), pp. 2589--2617.


\bibitem{KouJ2024_0}
 {J. Kou, A. Salama, H. Chen and S. Sun, Thermodynamically consistent numerical modeling of immiscible two-phase flow in poro-viscoelastic media, Int. J. Numer. Methods Eng., 2024, e7479.}


\bibitem{KouJ2024}
 {J. Kou and X. Wang, Numerical modeling of unsaturated flow in porous media using a thermodynamical approach, Capillarity, 11 (2024), pp. 63--69.}

\bibitem{Kroener1984}
D. Kroener and S. Luckhaus, Flow of oil and water in a porous medium, J. Differential Equations, 55 (1984), pp. 276--288.


\bibitem{Mikelic2010}
A. Mikeli\'c, A global existence result for the equations describing unsaturated flow in porous media with dynamic capillary pressure, J. Differential Equations, 248 (2010), pp. 1561--1577.

\bibitem{Nekvinda1988}
A. Nekvinda, A. and L. Zaj\'i\v cek, A simple proof of the Rademacher theorem, \v Casopis P\v est. Mat., 113 (1988), pp. 337--341.

\bibitem{Hassanizadeh2011}
 {J. Niessner, S. Berg and S. Majid Hassanizadeh, Comparison of two-phase Darcy's law with a thermodynamically consistent approach, Transp. Porous Med., 88 (2011), pp. 133--148.}


\bibitem{Shen2014}
J. Shen and X. Yang, Decoupled energy stable schemes for phase-field models of two-phase complex fluids, SIAM J. Sci. Comput., 36 (2014), pp. B122--B145.

\bibitem{Simon1986}
J. Simon, Compact sets in the space $L^p(O,T; B)$, Ann. Mat. Pura Appl., 146 (1986), pp. 65--96.

\bibitem{Steinle2022}
R. Steinle, T. Kleiner, P. Kumar and R. Hilfer, Existence and uniqueness of nonmonotone solutions in porous media flow, Axioms, 11 (2022), 327.

\bibitem{Tartar2007}
L. Tartar, An Introduction to Sobolev Spaces and Interpolation Spaces, Springer Science \& Business Media, 2007.

\bibitem{Van1968}
H. R. Van der Vaart and E. H. Yen, E, Weak Sufficient Conditions for Fatou's Lemma and Lebesgue's Dominated Convergence Theorem, Math. Mag., 41 (1968), pp. 109--117.

\bibitem{Zhu2019}
G. Zhu, J. Kou, B. Yao, Y.S. Wu, J. Yao and S. Sun, Thermodynamically consistent modelling of two-phase flows with moving contact line and soluble surfactants, J. Fluid Mech., 879 (2019), pp. 327--359.

\bibitem{Zhu2020}
G. Zhu and A. Li, Interfacial dynamics with soluble surfactants: A phase-field two-phase flow model with variable densities, Adv. Geo--Energy Res., 4 (2020), pp. 86--98.

%
%
%
%
%

\end{thebibliography}
\end{document}